\newcommand{\E}{{\mathbb E}}
\newcommand{\LL}{{\mathcal L}}
\newcommand{\cS}{{\mathcal S}}
\newcommand{\T}{{\mathbb T}}
\newcommand{\cA}{\mathcal A}
\newcommand{\cL}{\mathcal L}
\newcommand\eps{\varepsilon}
\newcommand\e{{\rm e}}
\newcommand\dd{{\rm d}}
\newcommand\uu {\boldsymbol{u}}
\newcommand\ddt{{\frac{\dd}{\dd t}}}
\newcommand\R {\mathbb{R}}
\newcommand\C {\mathbb{C}}
\newcommand\N {\mathbb{N}}
\newcommand\ZZ {{\mathbb Z}}
\renewcommand\l {\langle}
\renewcommand\r {\rangle}
\newcommand\de{{\partial}}
\newcommand\XX {\mathbf{X}}
\newcommand\xx {\boldsymbol{x}}
\newcommand\WW {\mathbf{W}}
\newcommand\bv {\boldsymbol{b}}
\newcommand\bx {\boldsymbol{x}}
\def\aa{{\alpha}}
\def\bb{{\beta}}
\def\cc{{\gamma}}
\newtheorem{proposition}{Proposition}[section]
\newtheorem{theorem}[proposition]{Theorem}
\newtheorem{corollary}[proposition]{Corollary}
\newtheorem{lemma}[proposition]{Lemma}
\theoremstyle{definition}
\newtheorem{remark}[proposition]{Remark}
\numberwithin{equation}{section}
\title[Homogenization and hypocoercivity for Fokker-Planck equations]{Homogenization and hypocoercivity for Fokker-Planck equations \\
driven by weakly compressible shear flows}
\author[M. Coti Zelati, G. A. Pavliotis]{Michele Coti Zelati and Grigorios A. Pavliotis}
\address{Department of Mathematics, Imperial College London, London, SW7 2AZ, UK}
\email{m.coti-zelati@imperial.ac.uk}
\email{g.pavliotis@imperial.ac.uk}
\subjclass[2000]{35B27, 35K15, 60J60, 76F25}
\keywords{Fokker-Planck equation, shear flows, homogenization, enhanced diffusion, hypocoercivity}
\begin{document}

\begin{abstract}
We study the long-time dynamics of two-dimensional linear Fokker-Planck equations driven by a drift that can be decomposed in the sum of a large
shear component and the gradient of a regular potential depending on one spatial variable. The problem can be interpreted as that of a passive
scalar advected by a slightly compressible shear flow, and undergoing small diffusion. For the corresponding stochastic differential equation, we 
give explicit homogenization rates in terms of a family of time-scales depending on the parameter measuring the strength of the incompressible perturbation. This is achieved by exploiting an auxiliary Poisson problem, and by computing the related effective diffusion coefficients. Regarding
the long-time behaviour of the solution of the Fokker-Planck equation, we provide explicit decay rates to the unique invariant measure by employing
a quantitative version of the classical hypocoercivity scheme. From a fluid mechanics perspective, this turns out to be equivalent to quantifying the phenomenon of enhanced diffusion for slightly compressible shear flows.
\end{abstract}


\maketitle

\section{Introduction}

Scalar transport is an important problem with many applications to, e.g. atmosphere/ocean science and engineering~\cites{CKRZ08, Golden_al_2020, kramer}. The evolution of the density of a passive tracer is governed by the advection-diffusion (Fokker-Planck) equation
\begin{equation}\label{e:adv-diff}
\de_t \rho= - \nabla \cdot (\bv \rho) + \kappa \Delta \rho,
\end{equation}
with an initial density $\rho(0,\xx)= \rho_0(\xx)$, where $\bv=\bv(\xx)$ denotes the (fluid) velocity field and $\kappa>0$ stands for the molecular diffusion coefficient. The stochastic differential equation corresponding to the advection-diffusion equation is
\begin{equation}\label{e:SDE}
\dd \XX(t) = \bv(\XX(t)) \dd t + \sqrt{2\kappa}\,\dd \WW(t).
\end{equation}
One is usually interested in the long-time, large-scale behaviour of the dynamics of~\eqref{e:adv-diff} or, equivalently, \eqref{e:SDE}. It is well-known~\cites{lions, PavlSt08} that, for periodic or random velocity fields, the dynamics of the passive scalar at large scales becomes diffusive and can be quantified by the effective diffusion tensor $D$~\cites{Pap95,PavlSt08,kramer}. The dependence of the diffusion tensor on the properties of the velocity field $\bv$, in particular in the asymptotic regime of small molecular diffusion, is a problem that has attracted a lot of attention in recent decades~\cites{Golden_al_2020, kramer}. More precisely, let $\bv$ be a smooth periodic vector field (which is the case that we will consider in this paper). If $e$ is an arbitrary unit vector in $\R^d$,  the rescaled process $X^e_{\eps}(t):=e \cdot \eps \XX(t/\eps^2)$ converges weakly in $C([0,T]; \R )$ to a Brownian motion with diffusion coefficient $D^e$, that is
\begin{equation}\label{e:limit}
X^e_{\eps}(t) \rightarrow \sqrt{2 D^e} W(t),\qquad \text{as } \eps\to0,
\end{equation}
where $D^e := e \cdot D e$, $D$ being the diffusion matrix. In the above we have assumed that the vector field $\bv$ is centered with respect to the invariant measure of the process $\XX(t)$ when restricted to the torus $\T^d=[0,2\pi)^d$, see Equation~\eqref{e:centering} below. This process is an ergodic Markov process with generator
\begin{equation}
\cL = \bv \cdot \nabla + \kappa \Delta.
\end{equation}
This is a partial differential operator on $\T^d$, equipped with periodic boundary conditions. The calculation of the diffusion coefficient along the $e$-direction  $D^e$ requires the solution of two PDEs on $\T^d$, the stationary Fokker-Planck equation and an appropriate Poisson equation, together with the calculation of an integral over the unit torus~\cite{PavlSt08}*{Ch. 13}. The stationary Fokker-Planck equation reads
\begin{equation}\label{e:stationary_FP}
\cL^* \rho_{\infty} =0,
\end{equation}
where $\cL^*$, the Fokker-Planck operator appearing in~\eqref{e:adv-diff}, is the $L^2(\T^d)$--adjoint of $\cL$; the Poisson equation is
\begin{equation}\label{e:poisson}
- \cL \phi^e = b^e :=\bv \cdot e. 
\end{equation}
The diffusion coefficient is given by the formula
\begin{equation}
D^e := \kappa \|e + \nabla \phi^e \|^2_{L^2_{\rho_{\infty}}} = \kappa \int_{\T^d} |e + \nabla \phi^e|^2 \rho_{\infty}(\xx) \, \dd\xx.
\end{equation}
The PDEs~\eqref{e:stationary_FP} and~\eqref{e:poisson} are equipped with periodic boundary conditions and we have assumed the centering condition
\begin{equation}\label{e:centering}
\int_{\T^d} \bv(\xx) \rho_{\infty}(\xx) \, \dd \xx = 0.
\end{equation}
The effect of a nonzero mean flow is studied in~\cites{thesis, MajMcL93} and it will not be considered in this paper. Convergence theorems of the form~\eqref{e:limit} can be proved using either PDE~\cite{lions} or probabilistic techniques~\cites{pardoux, bhatta}.

A question that has attracted a lot of interest, both from a mathematical and a computational perspective, is the calculation of the diffusion tensor for different types of vector fields $\bv$. Since it can be calculated analytically only in very few cases, e.g. for shear flows or for gradient flows in one dimension~\cite{PavlSt08}*{Sec. 13.6}, in most cases the best one can hope for is the derivation of estimates on the diffusion tensor and on its dependence on the parameters of the problem such as the molecular diffusivity $\kappa$. This problem has been studied in detail for two particular types of vector fields $\bv$ in~\eqref{e:SDE}, namely gradient flows $\bv = -\nabla V$ or divergence--free flows, for which $\nabla \cdot \bv = 0$. In the former case, where the SDE~\eqref{e:SDE} becomes
\begin{equation}
\dd \XX (t) = -\nabla V(\XX(t)) \, \dd t  +\sqrt{2 \kappa} \dd \WW(t),
\end{equation}
it is well known that the diffusion is always depleted~\cite{PavlSt08}*{Ch. 13}, namely
\begin{equation}
D^e \leq \kappa,
\end{equation}
for all directions $e$. In fact, when $\kappa \ll 1$, the diffusion coefficient becomes exponentially small in $\kappa$~\cite{CampPiatn2002}:
$$
D^e \sim C_1 \e^{- C_2/ \kappa}, \quad \kappa \ll 1.
$$
On the other hand, when the vector field is divergence--free, then diffusion is always enhanced~\cite{kramer}, \cite{PavlSt08}*{Ch. 13}. Furthermore, both lower and upper bounds for the diffusion coefficient are known:
\begin{equation}
\kappa \leq D^e \leq \kappa + \frac{1}{\kappa}, \quad \kappa \in (0, +\infty).
\end{equation}
The asymptotic behavior of the diffusion coefficient in the limit as $\kappa \rightarrow 0$ depends on the detailed properties of the vector field $\bv$ and can be quite different in different directions of $\R^d$~\cite{kramer}. This is also reflected on the scaling of the relevant time scales, e.g. the diffusive time scale; this scaling depends crucially on the divergence-free vector field, i.e on whether it has open or closed streamlines. A detailed study of this is presented in~\cite{Fannjiang02}.

Much less is known about the diffusion coefficient for flows that are neither gradient nor divergence--free. Homogenization problems for compressible flows have been studied in a few papers~\cites{McLaughlinForest99, vergassola}. However, the problem of the derivation of rigorous estimates on the diffusion coefficient for periodic vector fields that are neither gradient nor divergence-free has not been addressed yet. This is precisely the problem that we address in this paper; in particular, we obtain quantitative information on the effect of compressible perturbations of divergence-free flows on the effective diffusion coefficient, and we also study enhanced dissipation rates, for compressible perturbations of shear flows. Our analysis is based on recently developed techniques~\cites{BW13,BCZ15,CZ19}, including the theory of hypocoercivity~\cite{Villani09}. 

Before discussing in detail the framework that we will consider in this paper we present our main results, we mention a couple of related problems. 
%
%
\medskip 

\noindent $\diamond$ \emph{Homogenization for Inertial Particles.}
The problem of homogenization and enhanced dissipation for velocity fields that are not divergence-free arises naturally in the study of inertial particles~\cites{PavlStuZyg07, PavlStZyg09, PavlStBan06, PavSt05b}. The equations of motion for inertial particles, written in non-dimensional form, read~\cite{RV20}
\begin{equation}\label{e:inertia}
\mbox{St} \, \ddot{\XX} = \left( \bv(t,\XX) - \XX \right) +\mbox{St} \beta D_t   \bv(t,\XX) + \sqrt{2  Pe^{-1} } \dot{\WW},
\end{equation}
where $\mbox{St}$ denotes the Strouhal number, $\beta$ the fluid density and $D_t = \partial_t + \uu \cdot \nabla$ the material derivative. Rigorous homogenization results for dynamics of the form~\eqref{e:inertia} were obtained in~\cites{HP04, PavlStuZyg07, PavSt05b}. In recent work~\cite{RV20} it was shown that in the small inertia (small Strouhal number) limit, the dynamics~\eqref{e:inertia} reduces to a passive tracer equation of the form~\eqref{e:SDE} in a modified velocity field that is no longer incompressible. The velocity field that the inertial particles experiences is 
\begin{equation}
\uu_e = \uu - \mbox{St}(1 - \beta) D_t \uu.
\end{equation}
In particular, for time-independent flows we have that $\nabla \cdot \uu_e = - \mbox{St}(1 - \beta) \nabla \cdot \big((\uu \cdot \nabla) \uu \big)$. Even though this quantity vanishes for shear flows, the flows studied in this paper, we believe that the connection between the study of inertial particles and the problem of homogenization and enhanced dissipation for compressible flows~\cite{vergassola} is an interesting one and we plan to investigate this further in future work. Some preliminary numerical experiments are presented in Section~\ref{sec:numerics}.

%
%
\medskip 

\noindent $\diamond$ \emph{Nonreversible Langevin Samplers.} 
A fundamental problem in statistics and in computational statistical mechanics is that of sampling from a probability measure $\pi(\dd\xx) = \frac{1}{Z} \e^{-V(\xx)^2} \, \dd\xx$ that is known up to the normalization constant. A standard approach to sampling from $\pi(\dd\xx)$ is to consider dynamics that is ergodic with respect to this measure. The natural choice is that of the overdamped Langevin dynamics
\begin{equation}\label{e:langevin}
\dd \XX(t) = - \nabla V(\XX(t)) \, \dd t + \sqrt{2} \, \dd \WW(t).
\end{equation} 
The rate of convergence of~\eqref{e:langevin} to the target distribution $\pi(\dd\xx) = \frac{1}{Z} \e^{-V(\xx)} \, \dd\xx$ is given by the Poincar\'{e} (spectral gap) and logarithmic Sobolev inequalities~\cite{Bakry2014}, and it depends only on the properties of the potential function $V$. In order to speed up convergence to equilibrium and to reduce the asymptotic variance, a natural approach is to perturb the dynamics~\eqref{e:langevin} by adding a divergence-free perturbation $\frac{1}{\nu} \, \uu(\xx)$ with 
\begin{equation}\label{e:div_free}
\nabla \cdot(\uu e^{-V}) = 0
\end{equation}
that does not change the invariant measure:
\begin{equation}
\dd \XX^{\nu}(t) = \left(-\nabla V(\XX^{\nu}(t)) + \frac{1}{\nu} \uu(\XX^{\nu}(t)) \right) \, \dd t + \sqrt{2} \, \dd \WW(t).
\end{equation}
It is indeed possible to prove that the divergence-free perturbation accelerates convergence to the target distribution~\cites{Hwang_al1993,Hwang_al2005,LelievreNierPavliotis2013} and, in addition, that it reduces the asymptotic variance~\cite{DuncanLelievrePavliotis2016}; the asymptotic variance plays a role analogous to that of the effective diffusion coefficient, and given by the same formula, in terms of an appropriate Poisson equation, or equivalently, of the Green-Kubo formula~\cite{Pavliotis2010}. It is worth noting that the set-up considered in this paper is in essence the opposite to this, i.e. we are interested in analyzing the effect of reversible perturbations to incompressible (divergence-free) flows on the long time behaviour of advection-diffusion equations.  We also mention that, in order for the 
divergence-free condition \eqref{e:div_free} to be satisfied, it is sufficient for the velocity field $\uu$ to be divergence-free and orthogonal to $\nabla V$. In this paper we will consider the SDE~\eqref{e:SDE} for such velocity fields:
\begin{align}\label{eq:sde2}
\dd \XX(t) =\bv(\XX(t))\dd t + \sqrt{2} \, \dd \WW(t),
\end{align}
with suitable initial conditions, with
\begin{align}\label{eq:bv}
\bv(\xx)=\frac{1}{\nu}\uu(\xx)-\nabla V(\xx), \qquad \uu\cdot \nabla V=0,
\end{align} 
and where $\nu>0$ and $\uu:\T^d\to\R^d$ is divergence-free and satisfies the condition
\begin{align}\label{eq:grad2}
\uu\cdot \nabla V=0.
\end{align} 
Our goal is to investigate the long-time behavior of its solutions in the limit $\nu\to 0$.

\subsection{Enhanced diffusion}\label{sub:introenhanced}
The probability density  $\rho:[0,\infty)\times \T^d \to \R$ of the solution $\XX(t)$ of  \eqref{eq:sde2} satisfies the Fokker-Planck equation 
\begin{align}\label{FP:rho}
\de_t \rho+\frac{1}{\nu} \uu \cdot\nabla \rho= \nabla \cdot\left(\rho \nabla V +\nabla \rho\right), \qquad \int_{\T^d}\rho(t,\xx)\dd\xx=1.
\end{align}
As mentioned above, the unique invariant density $\rho_\infty$  of the dynamics~\eqref{FP:rho} is given by  
the Gibbs measure 
\begin{align}\label{eq:Gibbs}
\rho_\infty(\xx)=\frac{1}{Z} \e^{- V(\xx)}, \qquad Z=\int_{\T^d} \e^{-V(\xx)}\dd \xx.
\end{align}
In order to study the convergence rates to $\rho_\infty$ of solutions to the Fokker-Planck equation, it is convenient to normalize with respect to the invariant distribution 
$\rho_\infty$ and consider the unknown $h$ defined by
\begin{align}
h(t,\xx)=\frac{\rho(t,\xx)}{\rho_\infty(\xx)}-1, \qquad \xx\in \T^d.
\end{align}
Indeed, $h$ satisfies the backward Kolmogorov equation
\begin{align}\label{FP:h}
\de_t h+\frac{1}{\nu} \uu \cdot\nabla h=\Delta h-\nabla V\cdot\nabla h,\qquad h(0,\xx)=h^{in}(\xx),
\end{align}
where $h^{in}$ is defined in terms of the initial distribution function of the process $\XX(t)$ as 
\begin{align}
h^{in}(\xx)= \frac{\rho(0,\xx)}{\rho_\infty(\xx)}-1.
\end{align}
Note that since \eqref{FP:rho} conserves mass, the same is true for \eqref{FP:h} for the weighted mass
\begin{align}
\int_{\T^d}h(t,\xx)\rho_\infty(\xx)\dd \xx=0, \qquad \forall t\geq 0.
\end{align}
Define the $L^2$-weighted space 
\begin{align}\label{eq:L2space}
L^2_{\rho_\infty}=\left\{ f:\T^d\to \R, \quad \int_{\T^d} |f(\xx)|^2\rho_\infty(\xx)\dd \xx<\infty, \quad \int_{\T^d}f(\xx)\rho_\infty(\xx)\dd \xx=0 \right\},
\end{align}
endowed with the natural norm and scalar product
\begin{align}\label{eq:L2scalnorm}
\l f,g\r=\int_{\T^d} f(\xx) g(\xx)\rho_\infty(\xx)\dd \xx,\qquad \| f\|^2=\int_{\T^d} |f(\xx)|^2\rho_\infty(\xx)\dd \xx.
\end{align}
It is straightforward to check that the operator $\Delta -\nabla V\cdot\nabla$ is symmetric in $L^2_{\rho_\infty}$ and
\begin{align}
\l \Delta f -\nabla V\cdot\nabla f,g\r=-\l  \nabla f,\nabla g\r,
\end{align}
while, thanks to \eqref{eq:grad2},
we have
\begin{align}
\l \uu\cdot\nabla f,g\r=-\l  f,\uu\cdot\nabla g\r,
\end{align}
for sufficiently smooth functions $f,g$. 

In advection-diffusion equations, when we take  $V=0$ in \eqref{FP:h}, the enhancement of diffusive mixing
in passive tracers  by a fast incompressible flow was studied in great generality in \cites{CKRZ08, Zlatos2010} from a qualitative standpoint, and quantitatively in the more recent works \cites{BW13, BCZ15, CZDE18,CZD19,CZ19,CZDri19,IXZ19,FI19,WEI18}. 
In particular, a necessary and sufficient condition for diffusion enhancement is that
the operator $\uu\cdot\nabla$ has no eigenfunctions in the homogeneous Sobolev space $\dot{H}^1$.

The main result of \cite{CKRZ08} on qualitative enhanced diffusion still holds with the presence of the potential $V$, thanks to the Hilbert space setting illustrated above, providing a framework for slightly compressible perturbations of incompressible flows. In this paper, we address the issue of quantitative estimates, when the velocity field $\uu$ is a two-dimensional shear flow with simple critical points, and $V$ is a potential depending on one of the two variables only. To put it in terms of the advection-diffusion equation \eqref{e:adv-diff}, we study a slightly compressible velocity field $\bv$, in which the main incompressible part $\uu$ in \eqref{eq:bv} is a shear flow.
We devise enhanced diffusion estimates via hypocoercivity methods, and relate them to the diffusive time-scales of the process $\XX(t)$. The corresponding result for the so-called Kolmogorov flow was proven in \cite{BW13}, and later generalized in \cite{BCZ15} to all  incompressible shear flows with a finite number of critical points. 

%
%
\subsection{A general abstract framework}
The setting described in Section \ref{sub:introenhanced} can be put in a more general abstract fashion as follows, see~\cite{Villani09}*{Chapter 2}. Suppose that we are given a Gibbs measure~\eqref{eq:Gibbs}. Any smooth vector field on $\T^d$ admits the decomposition 
\begin{align}\label{e:helmholtz_rho}
\bv = \frac{1}{\nu} \uu+\nabla \ln \rho_\infty,
\end{align}
where $\nabla \cdot (\uu \rho_\infty ) = 0$ and $\rho_\infty$ is the solution of the stationary Fokker-Planck equation
\begin{align}~\label{e:fp_stationary}
- \nabla \cdot (\bv \rho_\infty) +  \Delta \rho_\infty =0.
\end{align}
In writing \eqref{e:helmholtz_rho} we have already normalized the various vector fields and we have introduced a parameter $\nu>0$ which measures the strength of the deviation from the reversible dynamics. We remark that the stationary Fokker-Planck equation \eqref{e:fp_stationary} plays precisely the role of the Poisson equation $- \Delta V = \nabla \cdot \bv$ in the Helmholtz decomposition in a flat $L^2$ space. Using now the decomposition \eqref{e:helmholtz_rho} we can decompose the generator $\LL$ of the Markov process $\XX$ on $\T^d$ into a symmetric and an antisymmetric part in $L^2_{\rho_\infty}$ as defined in \eqref{eq:L2space}, representing the reversible and irreversible parts of the dynamics, respectively:
\begin{align}
\LL =\frac{1}{\nu} \cA+ \cS ,
\end{align}
where $\cS = (\nabla \ln \rho_\infty)\cdot \nabla +  \Delta$ and $\cA = \uu \cdot \nabla$. With this abstract setting at hand, the results of \cite{CKRZ08} can  be rephrased in terms of weighted $L^2$ spaces, giving a characterization of relaxation enhancement in terms of eigenfunctions of $\cA$
in the (operator) domain of $\cS^{1/2}$. In this paper, we prove a \emph{quantitive} version of this result in a special case, described in the next section. Estimates on the diffusion coefficient as a function of strength of the nonreversible perturbation, in the abstract setting considered in this subsection, are presented in~\cite{Pavliotis2010}.

Alternatively, one could ask what the effect of a small reversible perturbation of the divergence-free dynamics is on the diffusion coefficient. The two approaches are equivalent, and in this paper, we prefer to consider the influence of a large incompressible flow on the dynamics, as it is clear from the way we write \eqref{eq:bv}. As we shall see, a simple time rescaling makes the problem equivalent to a small compressible perturbation, with noise strength equal to $\sqrt{2\nu}$.

\subsection{Setting and main results}\label{sub:main}
Let $u,v\in C^1(\T)$ be two given functions, and define 
\begin{align}\label{eq:uV}
\uu(x,y)=\begin{pmatrix}
u(y)\\
0
\end{pmatrix},
\qquad V(y)=-\int_0^yv(y')\dd y'.
\end{align}
Throughout the paper, we will assume a zero-mean condition on $v$, namely
\begin{align}\label{eq:meanzerov}
\int_\T v(y)\dd y=0,
\end{align}
and a centering condition for $u$, that is,
\begin{align}\label{eq:meanzerou}
\int_\T u(y)\e^{-V(y)}\dd y=0.
\end{align}
Notice that thanks to \eqref{eq:meanzerov}, $V$ is a periodic potential.
Writing \eqref{eq:sde2} explicitly for the two-component process $\XX(t)=(X(t),Y(t))$, we obtain the system 
\begin{align}\label{eq:SDEsystem0}
\begin{cases}
\displaystyle\dd X(t)=\frac{1}{\nu}u(Y(t))\, dt+\sqrt{2}\,\dd W_1(t),\\
\dd Y(t)= v(Y(t)) \, dt +\sqrt{2}\,\dd W_2(t).
\end{cases}
\end{align}
Note that \eqref{eq:grad2} is automatically satisfied, independently of the choice of $u$ and $v$. 
It turns out that the noise driving the process $\{X(t)\}_{t\geq 0}$ is not essential, so that we consider the stochastic differential equations
\begin{align}\label{eq:SDEsystem}
\begin{cases}
\displaystyle\dd X(t)=\frac{1}{\nu}u(Y(t)) \, dt,\\
\dd Y(t)= v(Y(t)) \, dt +\sqrt{2}\,\dd W(t),
\end{cases}
\end{align}
with initial conditions 
\begin{align}
X(0)=X_0, \qquad Y(0)=Y_0.
\end{align}
Our first main result is a homogenization theorem for a suitable rescaling of the solution of  \eqref{eq:SDEsystem}, with explicit rates of convergence.
\begin{theorem}\label{thm:main1}
Assume $u,v\in C^1(\T)$ are given functions such that \eqref{eq:meanzerov} and \eqref{eq:meanzerou} hold. Consider the solution  $(X(t),Y(t))$ of \eqref{eq:SDEsystem}, and for $\beta>0$ define the one-parameter family of rescaled processes
\begin{align}\label{eq:scalegen}
X^\nu(t)=\nu^{1+\beta} X(t/\nu^{2\beta}), \qquad Y^\nu(t)=\nu^\beta Y(t/\nu^{2\beta}).
\end{align}
Let $\alpha\in \left(0,\frac12\right)$, and $ p\in \left[\frac{1}{2\alpha},\infty\right)$, and assume that the initial conditions satisfy
\begin{align}
\E |X_0|^p<\infty, \qquad \E |Y_0|^p<\infty.
\end{align}
Then, for any $T>0$,
there holds the convergence estimate
\begin{align}
\E \sup_{t\in[0,T]} |X^\nu(t)- \sqrt{2D_u}\, W_x(t)|^p+ \E \sup_{t\in[0,T]} |Y^\nu(t)- \sqrt{2D_v}\, W_y(t)|^p \lesssim \nu^{\alpha \beta p},
\end{align} 
for two independent one-dimensional Brownian motions $W_x(t), \, W_y(t)$, where
\begin{align}
D_u= \|\de_y\chi_u\|^2, \qquad D_v= \|1+\de_y\chi_v\|^2,
\end{align}
and $\chi_u,\chi_v:\T\to \R$ are the unique solutions to the one-dimensional periodic Poisson problems
\begin{align}
&v \de_y \chi_u+\de_{yy} \chi_u=-u, \qquad \int_\T \chi_u(y)\rho_\infty(y)\dd y=0,\\
&v \de_y \chi_v+\de_{yy} \chi_v=-v, \qquad \int_\T \chi_v(y)\rho_\infty(y)\dd y=0.
\end{align}
\end{theorem}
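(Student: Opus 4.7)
The plan is to reduce $(X^\nu, Y^\nu)$ to a pair of martingales that are quantitatively close to time-changed Brownian motions, and then extract the rate by combining the Dambis-Dubins-Schwarz theorem with the Hölder continuity of Brownian paths. First, I would apply Itô's formula to $\chi_u(Y(t))$ and $\chi_v(Y(t))$ and use the Poisson equations $v\chi_u'+\chi_u''=-u$ and $v\chi_v'+\chi_v''=-v$ to obtain
\[
\int_0^t u(Y(r))\,\dd r = \chi_u(Y_0) - \chi_u(Y(t)) + \sqrt{2}\int_0^t \chi_u'(Y(r))\,\dd W(r),
\]
and the analogous identity for $v$. Substituting into the integrated SDE \eqref{eq:SDEsystem} and rescaling gives
\[
X^\nu(t) = \nu^{1+\beta} X_0 + \nu^\beta\bigl[\chi_u(Y_0) - \chi_u(Y(t/\nu^{2\beta}))\bigr] + M_u^\nu(t),
\]
with $M_u^\nu(t) = \sqrt{2}\,\nu^\beta\int_0^{t/\nu^{2\beta}} \chi_u'(Y(r))\,\dd W(r)$, and a parallel decomposition for $Y^\nu$ with a martingale $M_v^\nu$ involving $1+\chi_v'$. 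The deterministic corrector terms are uniformly $O(\nu^\beta)$ because $\chi_u$ and $\chi_v$ are bounded on $\T$.

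Next, I would establish a quantitative ergodic theorem for the $Y$-process via the same Poisson-equation trick: for $g\in C(\T)$ with mean $\bar g=\int_\T g\,\rho_\infty\,\dd y$, solving $v\psi'+\psi''=\bar g-g$ and applying Itô to $\psi(Y)$ yields
\[
\int_0^T [g(Y(r))-\bar g]\,\dd r = \psi(Y_0) - \psi(Y(T)) + \sqrt{2}\int_0^T \psi'(Y(r))\,\dd W(r),
\]
whose supremum in $T$ has $L^p$-norm $O(\sqrt{T})$ by Burkholder-Davis-Gundy and Doob's inequality. Applying this to $g=(\chi_u')^2$, $g=(1+\chi_v')^2$, and the cross-term $g=\chi_u'(1+\chi_v')$ yields
\[
\sup_{t\in[0,T]}\bigl|\langle M_u^\nu\rangle_t - 2D_u t\bigr| + \sup_{t\in[0,T]}\bigl|\langle M_v^\nu\rangle_t - 2D_v t\bigr| + \sup_{t\in[0,T]}\bigl|[M_u^\nu,M_v^\nu]_t\bigr| \lesssim \nu^\beta
\]
in $L^p$. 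That the off-diagonal covariation tends to zero (and not to a nonzero constant) depends on the identity $\int_\T \chi_u'(1+\chi_v')\rho_\infty\,\dd y = 0$, which follows from weighted integration by parts together with the symmetry $\langle u,\chi_v\rangle=\langle v,\chi_u\rangle$ read off from the two Poisson equations.

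Finally, the multidimensional Dambis-Dubins-Schwarz theorem realizes $(M_u^\nu, M_v^\nu)$ as a time-change of a standard two-dimensional Brownian motion, up to the small perturbation coming from the off-diagonal covariation. Taking $W_x, W_y$ to be the natural independent Brownian motions associated with the diagonal limiting quadratic variations $2D_u t, 2D_v t$, the difference $M_u^\nu(t)-\sqrt{2D_u}\,W_x(t)$ reduces to a Brownian increment over a random time interval of sup-norm size $\lesssim \nu^\beta$; Hölder continuity of Brownian paths with any exponent $\alpha<1/2$ then yields $\E\sup_{t\in[0,T]}|\cdot|^p\lesssim \nu^{\alpha\beta p}$, the constraint $p\geq 1/(2\alpha)$ reflecting the threshold at which Doob's maximal inequality for the quantitative ergodic correction can be matched to the Hölder exponent $\alpha$. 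The principal obstacle I foresee is the multidimensional Dambis-Dubins-Schwarz step: although $M_u^\nu$ and $M_v^\nu$ are both driven by the single Brownian motion $W$, the smallness of $[M_u^\nu,M_v^\nu]_t$ must be exploited carefully to disentangle them and produce genuinely independent limits; the rest is the routine propagation of BDG and Doob inequalities, together with the moment assumptions on $X_0$ and $Y_0$, through the supremum in time.
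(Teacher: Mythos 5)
Your proposal follows essentially the same route as the paper: introduce the fast process $R^\nu=Y^\nu/\nu^\beta$, use the one-dimensional Poisson equation together with It\^o's formula to remove the $O(\nu^{-\beta})$ drifts and reduce $X^\nu,Y^\nu$ to martingales plus bounded correctors of size $\nu^\beta$, then apply Dambis--Dubins--Schwarz and the $\alpha$-H\"older continuity of Brownian paths to convert a sup bound of order $\nu^\beta$ on the quadratic-variation error (obtained by a \emph{second} Poisson equation, Burkholder--Davis--Gundy and Doob's inequality, which is exactly where $2\alpha p\geq 1$ enters) into the stated rate $\nu^{\alpha\beta p}$. The one place you go beyond the paper is the independence of the limiting Brownian motions: the paper proves Lemmas 2.2 and 2.3 separately and simply concatenates them, never verifying that a \emph{single} pair of independent Brownian motions $(W_x,W_y)$ works, even though $M_u^\nu$ and $M_v^\nu$ are both driven by the same $W$. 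Your identification of the cross-covariation cancellation is correct and is precisely what makes the joint statement credible: since $\LL=v\de_y+\de_{yy}$ is symmetric in $L^2_{\rho_\infty}$, one has $\langle\chi_u',\chi_v'\rangle=\langle u,\chi_v\rangle=\langle v,\chi_u\rangle$, and integration by parts against $\rho_\infty'=v\rho_\infty$ gives $\int_\T\chi_u'\rho_\infty=-\langle v,\chi_u\rangle$, so indeed $\int_\T\chi_u'(1+\chi_v')\rho_\infty\,\dd y=0$. You are also right to flag the multidimensional DDS/Knight step as the remaining technical obstacle; one possible resolution is to orthogonalize $M_u^\nu$ against $M_v^\nu$ and show the orthogonalizing correction is itself $O(\nu^\beta)$, but this requires some care (e.g.\ control of $1+\chi_v'$ in the denominator). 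In short, your approach is the paper's approach, and the one gap you acknowledge is in fact a gap in the paper's own argument rather than in yours.
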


\begin{remark}
It will be clear from the proof that deducing a convergence estimate for the fully diffusive problem \eqref{eq:SDEsystem0} amounts to changing the diffusion coefficient $D_u$ in \eqref{eq:Du} to $\|1+\de_y\chi_u\|^2$.
\end{remark}

Due to the degenerate noise considered for \eqref{eq:SDEsystem}, the unique invariant measure for $(X(t),Y(t))$ is the measure~\cites{HP04, HP08}

\begin{align}\label{eq:Gibbs2}
\rho_\infty(y)= \frac{1}{Z} \e^{- V(y)}, \qquad Z=2\pi \int_{\T} \e^{-V(y)}\dd y,
\end{align}
while \eqref{FP:h} becomes 
\begin{align}\label{FP:h1}
\de_t h+\frac{1}{\nu} u\de_x h=\de_{yy} h-v\de_y h,\qquad h(0,x,y)=h^{in}(x,y).
\end{align}
In what follows, we will be consistent with the notation introduced in \eqref{eq:L2space} and \eqref{eq:L2scalnorm}, so that
\begin{align}
L^2_{\rho_\infty}=\left\{ f:\T^2\to \R, \quad \int_{\T^2} |f(x,y)|^2\rho_\infty(y)\dd x \dd y<\infty, \quad \int_{\T^2}f(x,y)\rho_\infty(y)\dd x\dd y=0 \right\},
\end{align}
and
\begin{align}
\l f,g\r=\int_{\T^2} f(x,y) g(x,y)\rho_\infty(y)\dd x \dd y,\qquad \| f\|^2=\int_{\T^2} |f(x,y)|^2\rho_\infty(y)\dd x \dd y.
\end{align}
An important feature of \eqref{FP:h} is that it decouples in the $x$-Fourier modes. By expanding the solution $h$ as a Fourier series in the  $x$ variable, namely
\begin{align}
h(t,x,y)=\sum_{\ell\in \ZZ} \mathfrak{h}_\ell(t,y)\e^{i\ell x}, \qquad \mathfrak{h}_\ell(t,y)=\frac{1}{2\pi}\int_0^{2\pi}h(t,x,y)\e^{-i\ell x}\dd y.
\end{align} 
for any integer $\ell$ we have from \eqref{FP:h1} that
\begin{align}\label{FP:h1four}
\de_t \mathfrak{h}_\ell+\frac{i\ell}{\nu} u \mathfrak{h}_\ell=\de_{yy} \mathfrak{h}_\ell-v\de_y \mathfrak{h}_\ell,\qquad \mathfrak{h}_\ell(0,y)=\mathfrak{h}_\ell^{in}(y).
\end{align}
However, in order not to deal with  complex-valued function and heavier notation, it is more convenient to deal with functions that 
are localized on a single band $\pm \ell$. Thus, for $k\in\N_0$ we set
\begin{equation}\label{eq:band}
 h_k(t,x,y):=\sum_{|\ell|=k} \mathfrak{h}_\ell(t,y)\e^{i\ell x}.
\end{equation}
This way we may write 
\begin{align}
h(t,x,y)=\sum_{k\in\N_0}h_k(t,x,y),
\end{align} 
as a sum of \emph{real-valued} functions $h_k$ that are localized in $x$-frequency on a single band $\pm k$, $k\in\N_0$. In particular,
for the $x$-average of the function $h$ corresponds to $h_0=\mathfrak{h}_0$.
When norms and scalar products are applied to Fourier modes, it is understood that we will consider the complex one-dimensional version of \eqref{eq:L2scalnorm}, as no confusion will arise.

Our second main result consists of explicit rates of convergence to 0 for $h_k$.

\begin{theorem}\label{thm:main2}
Assume $u,v\in C^2(\T)$ are given functions such that \eqref{eq:meanzerov} and \eqref{eq:meanzerou} hold, and further assume that
$u$ has a finite number of critical points such that $u''(y_{crit})\neq0$.
Then there exist constants $\nu_0,\eps_0\in (0,1)$  such that the following holds:
there exist positive numbers $\aa_0,\bb_0,\cc_0$ only depending on $\eps_0$
for each integer $k\in \N$ and $\nu>0$ with
$\nu k^{-1}\leq \nu_0$
 the energy functional
\begin{align}\label{eq:PHIk}
\Psi_k=\frac12\left[\|h_k\|^2 + \frac{\nu^{1/2}\aa_0}{k^{1/2}} \|\de_y h_k\|^2+\frac{2\bb_0}{k}  \l u' \de_x h_k, \de_y h_k \r+\frac{\cc_0}{\nu^{1/2}k^{3/2}} \| u'\de_x h_k\|^2 \right]
\end{align}
satisfies the differential inequality 
\begin{align}
&\ddt \Psi_k+\eps_0\frac{k^{1/2}}{\nu^{1/2}}\Psi_k+\frac{\aa_0\nu^{1/2}}{2k^{1/2}} \| \de_{yy}h_k-v\de_y h_k\|^2+\frac{\cc_0}{2\nu^{1/2}k^{3/2}}\| u'\de_{xy} h_k\|^2 \leq 0.
\end{align}
for all $t\geq0$.
In particular
\begin{align}
\Psi_k(t) \leq  \e^{-\eps_0\frac{k^{1/2}}{\nu^{1/2}}t}\Psi_k(0), \qquad \forall t\geq 0.
\end{align}
\end{theorem}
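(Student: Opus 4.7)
The plan is to execute a quantitative hypocoercivity argument in the spirit of \cite{Villani09} and the shear-flow enhanced-dissipation arguments of \cite{BW13,BCZ15}, adapted to the weighted Hilbert space $L^2_{\rho_\infty}$. Since the antisymmetric drift $\nu^{-1}u(y)\partial_x$ is diagonal in the $x$-Fourier variable, equation \eqref{FP:h1} decouples across bands, so one works directly on $h_k$ for a fixed $k\in\N$. The relevant structural facts in $L^2_{\rho_\infty}$ are that $\cA=\nu^{-1}u\partial_x$ is antisymmetric, $\cS=\partial_{yy}-v\partial_y$ is symmetric with $\langle f,\cS f\rangle=-\|\partial_y f\|^2$ and $\|\cS f\|^2=\|\partial_{yy}f-v\partial_y f\|^2$, and the commutator $[\cA,\partial_y]=-\nu^{-1}u'\partial_x$ is precisely what transfers dissipation from the $y$-direction into the $x$-direction.

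I would differentiate each of the four quadratic pieces of $\Psi_k$ and collect the results. A direct computation based on integration by parts against $\rho_\infty$ and the commutator identities $[\partial_y,\cS]=-v'\partial_y$ and $[\partial_y,u\partial_x]=u'\partial_x$ yields
\begin{align}
\tfrac{1}{2}\ddt\|h_k\|^2&=-\|\partial_y h_k\|^2,\\
\tfrac{1}{2}\ddt\|\partial_y h_k\|^2&=-\|\partial_{yy}h_k-v\partial_y h_k\|^2-\tfrac{1}{\nu}\langle u'\partial_x h_k,\partial_y h_k\rangle,\\
\ddt\langle u'\partial_x h_k,\partial_y h_k\rangle&=-\tfrac{1}{\nu}\|u'\partial_x h_k\|^2+Q_k,\\
\tfrac{1}{2}\ddt\|u'\partial_x h_k\|^2&=-\|u'\partial_{xy}h_k\|^2+R_k,
\end{align}
where the exact term $\|\partial_{yy}h_k-v\partial_y h_k\|^2$ in the second identity arises from the identity $\|\cS f\|^2=\|\partial_{yy}f\|^2+\langle v'\partial_y f,\partial_y f\rangle$ combined with $\langle \partial_y f,\cS\partial_y f\rangle=-\|\partial_{yy}f\|^2$, and the remainders $Q_k,R_k$ only involve $u',u'',v,v'$ and are bounded via Young's inequality by small fractions of the dissipation quantities $\|\partial_{yy}h_k-v\partial_y h_k\|^2$, $\|u'\partial_{xy}h_k\|^2$, $\|\partial_y h_k\|^2$ and $\|u'\partial_x h_k\|^2$ thanks to $u,v\in C^2(\T)$. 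The crucial hypocoercivity cancellation is that the indefinite $-\nu^{-1}\langle u'\partial_x h_k,\partial_y h_k\rangle$ in the second identity is absorbed, after multiplication by $2\beta_0 k^{-1}$, by a fraction of $-\nu^{-1}\|u'\partial_x h_k\|^2$ from the third; the constraint $4\beta_0^2<\alpha_0\gamma_0$ ensures simultaneously that $\Psi_k$ is coercive, i.e.\ equivalent to $\|h_k\|^2+\alpha_0\nu^{1/2}k^{-1/2}\|\partial_y h_k\|^2+\gamma_0\nu^{-1/2}k^{-3/2}\|u'\partial_x h_k\|^2$. Summing the four identities with the prescribed weights and choosing $\gamma_0\ll\beta_0\ll\alpha_0\ll 1$ produces
\begin{align}
\ddt\Psi_k+\tfrac{\alpha_0\nu^{1/2}}{2k^{1/2}}\|\partial_{yy}h_k-v\partial_y h_k\|^2+\tfrac{\gamma_0}{2\nu^{1/2}k^{3/2}}\|u'\partial_{xy}h_k\|^2+\tfrac{\beta_0}{\nu k}\|u'\partial_x h_k\|^2+\|\partial_y h_k\|^2\leq 0.
\end{align}

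To close the estimate with the claimed rate $\eps_0 k^{1/2}\nu^{-1/2}$, the dissipation must dominate $\tfrac{k^{1/2}}{\nu^{1/2}}\Psi_k$, and this is where the simple critical point hypothesis $u''(y_{\text{crit}})\neq 0$ enters decisively: it gives the measure bound $\mathrm{meas}\{|u'|\leq\delta\}\lesssim \delta$. Setting $\delta=(\nu/k)^{1/4}$ (the boundary-layer scale dictated by balancing transport suppression $\nu^{-1}k(y-y_{\text{crit}})^2$ against diffusion $(y-y_{\text{crit}})^{-2}$) and splitting $h_k$ via a cutoff around each critical point, on the far region $\{|u'|\geq\delta\}$ one has, using $\|\partial_x h_k\|=k\|h_k\|$ on band $k$,
\begin{align}
\tfrac{1}{\nu}\|u'\partial_x h_k\|^2\gtrsim \tfrac{k^{2}\delta^{2}}{\nu}\|h_k\|_{\text{far}}^2=\tfrac{k^{1/2}}{\nu^{1/2}}\|h_k\|_{\text{far}}^2,
\end{align}
while on the near region a one-dimensional Poincar\'e-type inequality on intervals of length $\delta$ yields $\|h_k\|_{\text{near}}^2\lesssim \delta^{2}\|\partial_y h_k\|^2$, whence $\tfrac{k^{1/2}}{\nu^{1/2}}\|h_k\|_{\text{near}}^2\lesssim\|\partial_y h_k\|^2$. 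The subleading pieces of $\Psi_k$ (the ones weighted by $\alpha_0\nu^{1/2}k^{-1/2}$ and $\gamma_0\nu^{-1/2}k^{-3/2}$) are absorbed analogously using the higher-order diffusive terms $\|\partial_{yy}h_k-v\partial_y h_k\|^2$ and $\|u'\partial_{xy}h_k\|^2$. The main obstacle is exactly this final step: identifying the boundary-layer scale $(\nu/k)^{1/4}$ characteristic of simple critical points and tuning $\alpha_0,\beta_0,\gamma_0$ so that both the $Q_k,R_k$ remainders and all error terms from the Poincar\'e bookkeeping are absorbed while producing the sharp enhanced-dissipation exponent $1/2$. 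Once the differential inequality holds, Gr\"onwall's lemma gives the announced exponential decay of $\Psi_k$.
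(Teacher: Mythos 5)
Your plan is, in its essentials, the same hypocoercivity scheme the paper uses: the augmented energy functional with the weights $\nu^{1/2}k^{-1/2}$, $k^{-1}$, $\nu^{-1/2}k^{-3/2}$, the four energy identities, the Young's inequality budget for the error terms (with the coercivity constraint $\bb_0^2\lesssim\aa_0\cc_0$), and closure via a shear-flow spectral inequality. The one real point of departure is how the spectral step is handled. The paper rescales time to $f(t,x,y)=h(\nu t,x,y)$ (pure bookkeeping) and then \emph{cites} \cite{BCZ15}*{Prop. 2.7}, namely $\sigma\|g\|^2\leq C_0\bigl(\sigma^2\|\de_y g\|^2+\|u'g\|^2\bigr)$ for $\sigma$ small, applied with $\sigma^2\sim\nu/k$, to convert the available dissipation $\nu\|\de_y f\|^2+\frac{\bb_0}{k}\|u'\de_x f\|^2$ into $\nu^{1/2}k^{1/2}\|f\|^2$. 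You instead propose to \emph{re-derive} this inequality via the boundary-layer scale $\delta=(\nu/k)^{1/4}$ and a near/far split around the critical points. That is precisely the content of the cited lemma, so your route is not wrong, only longer; citing it, as the paper does, shortens the argument considerably.

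Two execution slips should be fixed. First, in the far-region estimate the dissipation you actually control in your differential inequality is $\frac{\bb_0}{\nu k}\|u'\de_x h_k\|^2$, not $\frac{1}{\nu}\|u'\de_x h_k\|^2$; with $\delta=(\nu/k)^{1/4}$ the correct chain is
\begin{align}
\frac{\bb_0}{\nu k}\|u'\de_x h_k\|^2\geq\frac{\bb_0 k\delta^2}{\nu}\|h_k\|_{\mathrm{far}}^2=\frac{\bb_0 k^{1/2}}{\nu^{1/2}}\|h_k\|_{\mathrm{far}}^2,
\end{align}
whereas your displayed equality $\frac{k^2\delta^2}{\nu}=\frac{k^{1/2}}{\nu^{1/2}}$ is off by a factor $k$ (the left side equals $k^{3/2}\nu^{-1/2}$). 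Second, the near-region bound $\|h_k\|_{\mathrm{near}}^2\lesssim\delta^2\|\de_y h_k\|^2$ is not a plain Poincar\'e inequality: $h_k$ has neither zero mean nor boundary vanishing on the near intervals, so you must control the interval average by the trace at a point where $|u'|\geq\delta$ and reabsorb it into the far region, exactly as in the proof of \cite{BCZ15}*{Prop. 2.7}. Neither issue is a conceptual gap in the approach, but both must be tidied for a complete proof.
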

The above result confirms that the gradient perturbation given by the potential $V$ does not affect the enhanced diffusion time-scales of the 
backward Kolmogorov equations. From a fluid dynamics perspective, scalars advected by a shear flow $\nu^{-1}(u(y),0)$ (as studied in \cite{BCZ15}) or the slightly compressible perturbation of a shear $\nu^{-1}(u(y),\nu v(y))$ have the same decay properties.

While the dependence on $\nu$ and $k$ of the functional $\Psi_k$ may be cumbersome for interpreting 
the real decay properties of $h_k$, the following simple consequence
entails a clearer result that simply requires that the initial condition be in $L^2_{\rho_\infty}$. The assumptions are the same as in Theorem \ref{thm:main2}.
\begin{corollary}\label{cor:main2}
There exist constants $\nu_0,\eps_0\in (0,1)$ and $c_0>1$ such that the following holds:
for each integer $k\in \N$ and $\nu>0$ with
$\nu k^{-1}\leq \nu_0$
there holds the estimate
\begin{align}\label{eq:L2hk}
\|h_k(t)\|^2 \leq  c_0 \|h^{in}_k\|^2 \e^{-\eps_0\frac{k^{1/2}\nu^{-1/2}}{1+|\ln\nu|+\ln k}t}  , \qquad \forall t\geq 0.
\end{align}
Moreover,
\begin{align}\label{eq:L2h0}
\|h_0(t)\|^2 \leq  \|h^{in}_0\|^2 \e^{-\eps_0 t}  , \qquad \forall t\geq 0.
\end{align}
\end{corollary}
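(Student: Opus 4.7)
The plan is to treat the $x$-average $h_0$ and the higher modes $h_k$ with $k\geq 1$ separately. For $h_0=h_0(t,y)$, equation \eqref{FP:h1} decouples into $\de_t h_0=\de_{yy}h_0-v\de_y h_0$, a self-adjoint diffusion on the mean-zero subspace of $L^2_{\rho_\infty}(\T)$. Since $\rho_\infty$ is smooth and bounded above and below on $\T$, a weighted Poincar\'e inequality $\|h_0\|^2\leq C_P\|\de_y h_0\|^2$ holds on this subspace; combined with the energy identity $\tfrac12\tfrac{d}{dt}\|h_0\|^2=-\|\de_y h_0\|^2$, this yields exponential decay at rate $2/C_P$, and \eqref{eq:L2h0} follows by choosing $\eps_0$ small enough.

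For $k\geq 1$, the strategy is to apply Theorem \ref{thm:main2} starting from a small positive intermediate time $\tau_*$ and to control $\Psi_k(\tau_*)$ in terms of $\|h_k^{in}\|^2$ alone by means of an averaging argument. The same energy identity applied to $h_k$ (using skew-symmetry of $u\de_x$ and self-adjointness of $\de_{yy}-v\de_y$ in $L^2_{\rho_\infty}$) gives
\[
\frac12\frac{d}{dt}\|h_k\|^2=-\|\de_y h_k\|^2,
\]
so that $\|h_k\|$ is non-increasing and $\int_0^\infty\|\de_y h_k(\tau)\|^2\,\dd\tau\leq \tfrac12\|h_k^{in}\|^2$. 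Setting $t_s=\nu^{1/2}/k^{1/2}$, a pigeonhole argument on the interval $[t_s/2,t_s]$ produces some $\tau_*\in[t_s/2,t_s]$ with $\|\de_y h_k(\tau_*)\|^2\leq (k^{1/2}/\nu^{1/2})\|h_k^{in}\|^2$. Combining this with $\|h_k(\tau_*)\|^2\leq \|h_k^{in}\|^2$ and the crude bound $\|u'\de_x h_k(\tau_*)\|^2\leq \|u'\|_\infty^2 k^2\|h_k^{in}\|^2$, every term in \eqref{eq:PHIk} can be estimated separately, yielding
\[
\Psi_k(\tau_*)\leq C\frac{k^{1/2}}{\nu^{1/2}}\|h_k^{in}\|^2,
\]
for a constant $C>0$ depending only on $\aa_0,\bb_0,\cc_0$ and $\|u'\|_\infty$.

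The built-in coercivity $\Psi_k\geq\tfrac12\|h_k\|^2$ (ensured by the constraints on $\aa_0,\bb_0,\cc_0$ already required by Theorem \ref{thm:main2}), together with the exponential decay $\Psi_k(t)\leq \Psi_k(\tau_*)\,\e^{-\eps_0(k^{1/2}/\nu^{1/2})(t-\tau_*)}$ for $t\geq\tau_*$, yields
\[
\|h_k(t)\|^2\leq 2C\frac{k^{1/2}}{\nu^{1/2}}\|h_k^{in}\|^2\,\e^{-\eps_0(k^{1/2}/\nu^{1/2})(t-\tau_*)}, \qquad t\geq\tau_*.
\]
Choose $t_1\geq\tau_*$ so that the polynomial prefactor $2C(k^{1/2}/\nu^{1/2})$ is exactly cancelled by the exponential; this gives $t_1\lesssim (1+|\ln\nu|+\ln k)/(k^{1/2}\nu^{-1/2})$. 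Combining the trivial non-increasing bound on $[0,t_1]$ with the displayed estimate on $[t_1,\infty)$ yields a piecewise decay bound which, after slowing the effective rate to $\lambda=\eps_0(k^{1/2}\nu^{-1/2})/(1+|\ln\nu|+\ln k)$ (possibly shrinking $\eps_0$) so that $\lambda t_1\leq \ln c_0$ for a universal $c_0>1$, collapses into the single exponential \eqref{eq:L2hk}. The main obstacle is precisely the smoothing step: one must bound $\Psi_k(\tau_*)$ purely in terms of $\|h_k^{in}\|^2$, even though $\Psi_k$ involves first-order derivatives that need not be controlled at $t=0$. The polynomial loss $k^{1/2}\nu^{-1/2}$ produced in that step is exactly what, after being absorbed into the exponential decay, yields the logarithmic denominator $1+|\ln\nu|+\ln k$ in \eqref{eq:L2hk}.
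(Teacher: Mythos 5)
Your argument is correct and follows essentially the same route as the paper: the $k=0$ mode is treated by the weighted Poincar\'e inequality applied to the decoupled diffusion $\de_t h_0 = \de_{yy}h_0 - v\de_y h_0$, and for $k\geq 1$ you use the $L^2$ energy identity together with a mean-value (pigeonhole) argument on a time window of length $\sim\nu^{1/2}k^{-1/2}$ to find $\tau_*$ at which $\|\de_y h_k(\tau_*)\|^2\lesssim (k/\nu)^{1/2}\|h_k^{in}\|^2$, bound $\Psi_k(\tau_*)\lesssim (k/\nu)^{1/2}\|h_k^{in}\|^2$, then propagate the exponential decay of $\Psi_k$ from Theorem \ref{thm:main2} and trade the polynomial prefactor for the logarithmic loss $1+|\ln\nu|+\ln k$ in the rate, covering $[0,\tau_*]$ and the transient by the monotonicity of $\|h_k\|$. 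The only cosmetic difference is that the paper carries out this scheme in the time-rescaled variable $f(t)=h(\nu t)$ (Corollary \ref{cor:main3}) and then undoes the rescaling, while you work directly with $h$; the substance is identical.
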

Estimate \eqref{eq:L2hk} contains two very important pieces of information. On the one hand, it quantifies precisely the influence of a large drift 
in the Kolmogorov equation, which allows the convergence mode-by-mode. On the other hand, it shows how the drift has an instantaneous regularization effect in the $x$-variable, in which diffusion is not present: from $L^2$ initial data, \eqref{eq:PHIk} shows that Fourier coefficients decay exponentially fast, giving rise to Gevrey-type regularization effects. It is worth mentioning that, due to the results in \cite{CZDri19}, the  decay rate is 
optimal up to the logarithmic correction.

Since the $x$-average of $h$ is not influenced by the drift (see \eqref{FP:h1four} for $k=0$),  the results of Corollary \ref{cor:main2} can be stated
for the solution of the advection diffusion equation \eqref{FP:h1} as follows. 
\begin{corollary}\label{cor:main2real}
There exist constants $\nu_0\in (0,1)$ and $c_0>1$ such that the following holds:
for each $\nu\in(0,\nu_0]$ with
there holds the estimate
\begin{equation}\label{eq:L2hreal}
\left\|h(t)-\int_\T h(t,x,y)\dd x\right\|^2 \leq  c_0 \left\|h^{in}-\int_\T h^{in}(x,y)\dd x\right\|^2 \e^{-\eps_0\frac{\nu^{-1/2}}{1+|\ln\nu|}t}  , \qquad \forall t\geq 0,
\end{equation}
for any $h^{in}\in L^2_{\rho_\infty}$.
\end{corollary}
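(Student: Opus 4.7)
\textbf{Proof plan for Corollary \ref{cor:main2real}.} The natural approach is to pass from the mode-by-mode estimate of Corollary \ref{cor:main2} to a bound on the full $x$-fluctuation of $h$ via Parseval. I would begin by writing the Fourier decomposition
\begin{align}
h(t,x,y)-\int_\T h(t,x,y)\dd x=\sum_{k\geq 1}h_k(t,x,y),
\end{align}
using the notation of \eqref{eq:band}; since the bands $\{h_k\}_{k\in\N}$ are localized on disjoint sets of $x$-frequencies and $\rho_\infty$ depends only on $y$, the $h_k$ are mutually orthogonal in $L^2_{\rho_\infty}$. Hence
\begin{align}
\left\|h(t)-\int_\T h(t,x,y)\dd x\right\|^2=\sum_{k\geq 1}\|h_k(t)\|^2,
\end{align}
and analogously for the initial datum.

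The next step is to insert the mode-by-mode decay \eqref{eq:L2hk} and to extract a decay rate that is uniform in $k\in\N$. Restricting to $\nu\leq\nu_0$, the assumption $\nu k^{-1}\leq\nu_0$ in Corollary \ref{cor:main2} holds for every $k\geq 1$, so \eqref{eq:L2hk} applies. The key elementary bound is
\begin{align}
\frac{k^{1/2}\nu^{-1/2}}{1+|\ln\nu|+\ln k}\geq \frac{c\,\nu^{-1/2}}{1+|\ln\nu|}\qquad \text{for all integers } k\geq 1,
\end{align}
for some absolute constant $c>0$. To see this, write $L:=1+|\ln\nu|\geq 1$ and observe that
\begin{align}
\frac{k^{1/2}L}{L+\ln k}\geq \frac{k^{1/2}}{1+\ln k},
\end{align}
so it suffices to show that $k\mapsto k^{1/2}/(1+\ln k)$ has a positive lower bound on $\N$; differentiation gives a unique minimum near $k=e$ with value bounded away from $0$, which settles the claim.

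Combining the uniform rate with \eqref{eq:L2hk} yields, for every $k\geq 1$,
\begin{align}
\|h_k(t)\|^2\leq c_0\|h_k^{in}\|^2\,\e^{-\eps_0 c\,\nu^{-1/2}t/(1+|\ln\nu|)},
\end{align}
and summing over $k\geq 1$ and invoking Parseval once more gives precisely \eqref{eq:L2hreal}, after renaming the constants $c_0$ and $\eps_0$. The argument is essentially a routine postprocessing of Corollary \ref{cor:main2}; the only mildly delicate point is the uniform lower bound on the decay rate, which would be trivial were it not for the logarithmic correction in the denominator. No new PDE estimates are needed.
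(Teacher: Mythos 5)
Your proof is correct and is essentially the proof the paper leaves implicit (the paper simply states that Corollary \ref{cor:main2real} follows from Corollary \ref{cor:main2}, without spelling out the steps). You correctly observe that for $\nu\le\nu_0$ the hypothesis $\nu k^{-1}\le\nu_0$ holds for every $k\ge1$, that the bands $h_k$ are pairwise orthogonal in $L^2_{\rho_\infty}$ because $\rho_\infty$ depends only on $y$ so the $x$-integration is unweighted, and that the rate $k^{1/2}\nu^{-1/2}/(1+|\ln\nu|+\ln k)$ is bounded below uniformly in $k$ by $c\,\nu^{-1/2}/(1+|\ln\nu|)$ with $c=\min_{k\ge1}k^{1/2}/(1+\ln k)>0$ (the elementary inequality $L(1+\ln k)\ge L+\ln k$ for $L\ge1$, $k\ge1$ does the reduction). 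Summing the mode-by-mode bound over $k\ge1$ then gives \eqref{eq:L2hreal} after absorbing $c$ into $\eps_0$, exactly as you say.
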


The result of Corollary \ref{cor:main2real} is easily explained by looking at Figure \ref{fig:shear}, in which various snapshots of the solution 
of \eqref{FP:h1} with $u(y)=-3\cos (3y)$ and $v(y)\equiv 0$ are plotted. 

\begin{figure}[h!]
  \centering
  \begin{subfigure}[b]{0.24\linewidth}
    \includegraphics[width=\linewidth]{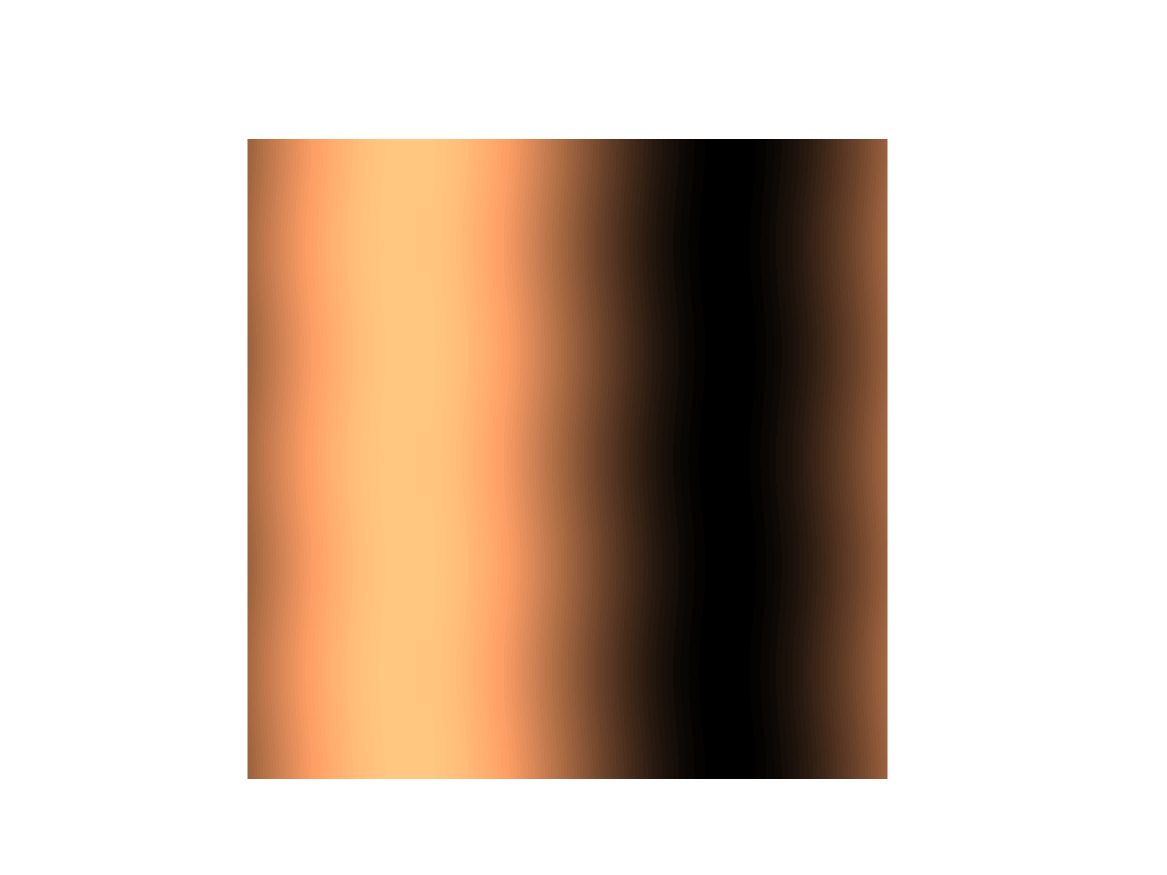}
  \end{subfigure}
  \begin{subfigure}[b]{0.24\linewidth}
    \includegraphics[width=\linewidth]{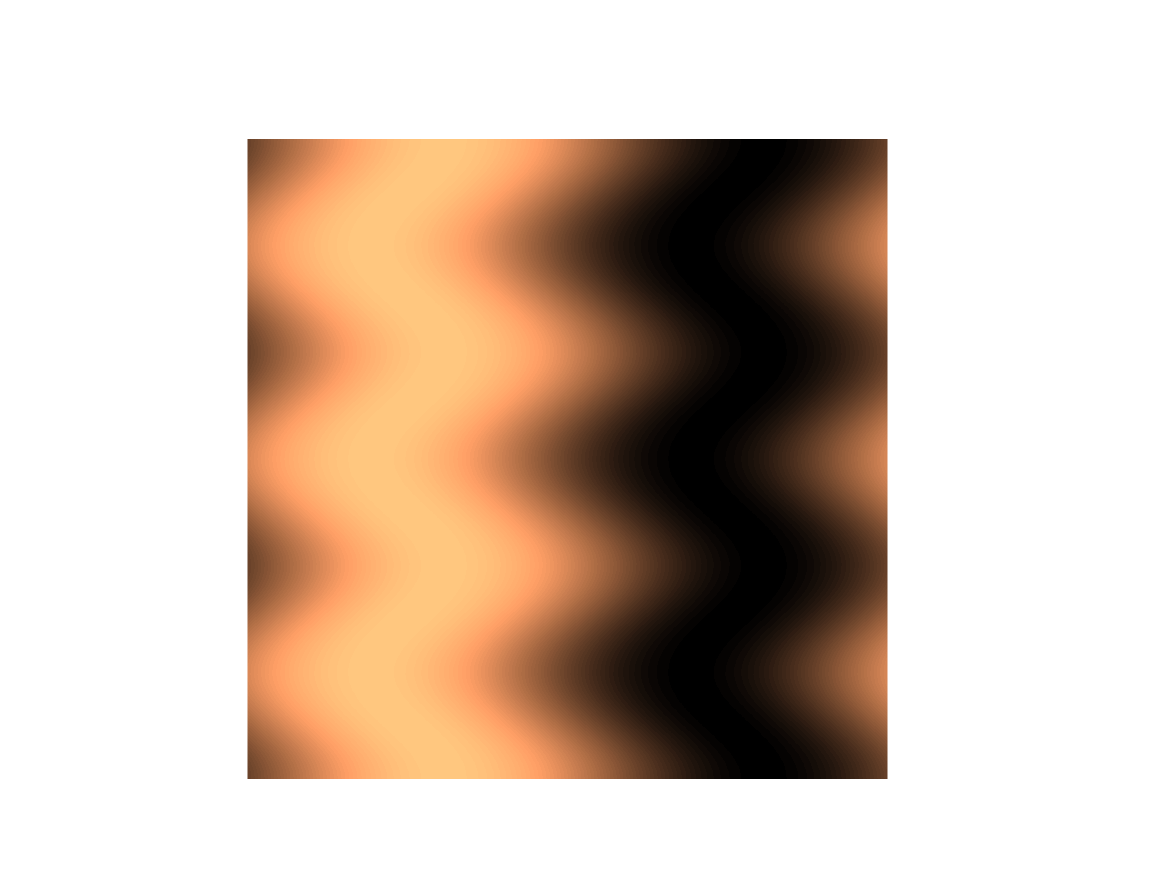}
  \end{subfigure}
  \begin{subfigure}[b]{0.24\linewidth}
    \includegraphics[width=\linewidth]{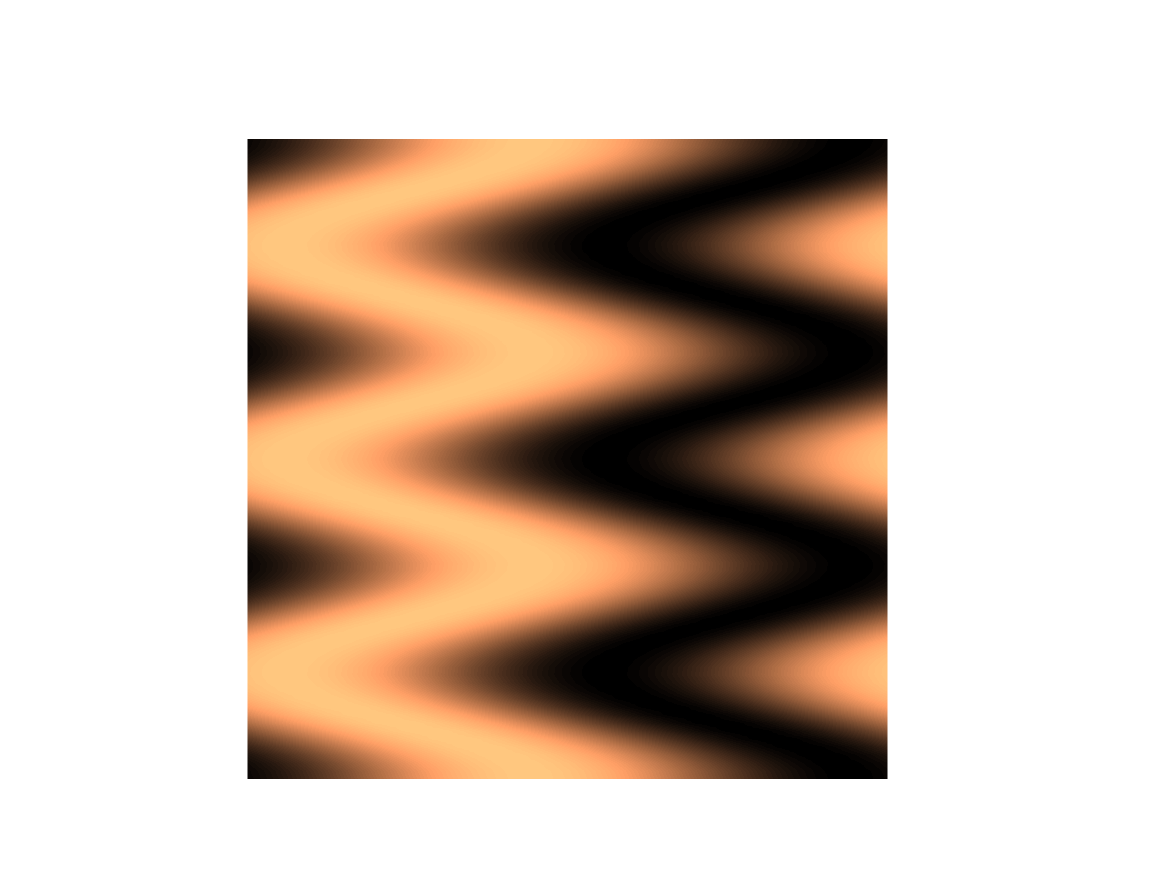}
  \end{subfigure}
  \begin{subfigure}[b]{0.24\linewidth}
    \includegraphics[width=\linewidth]{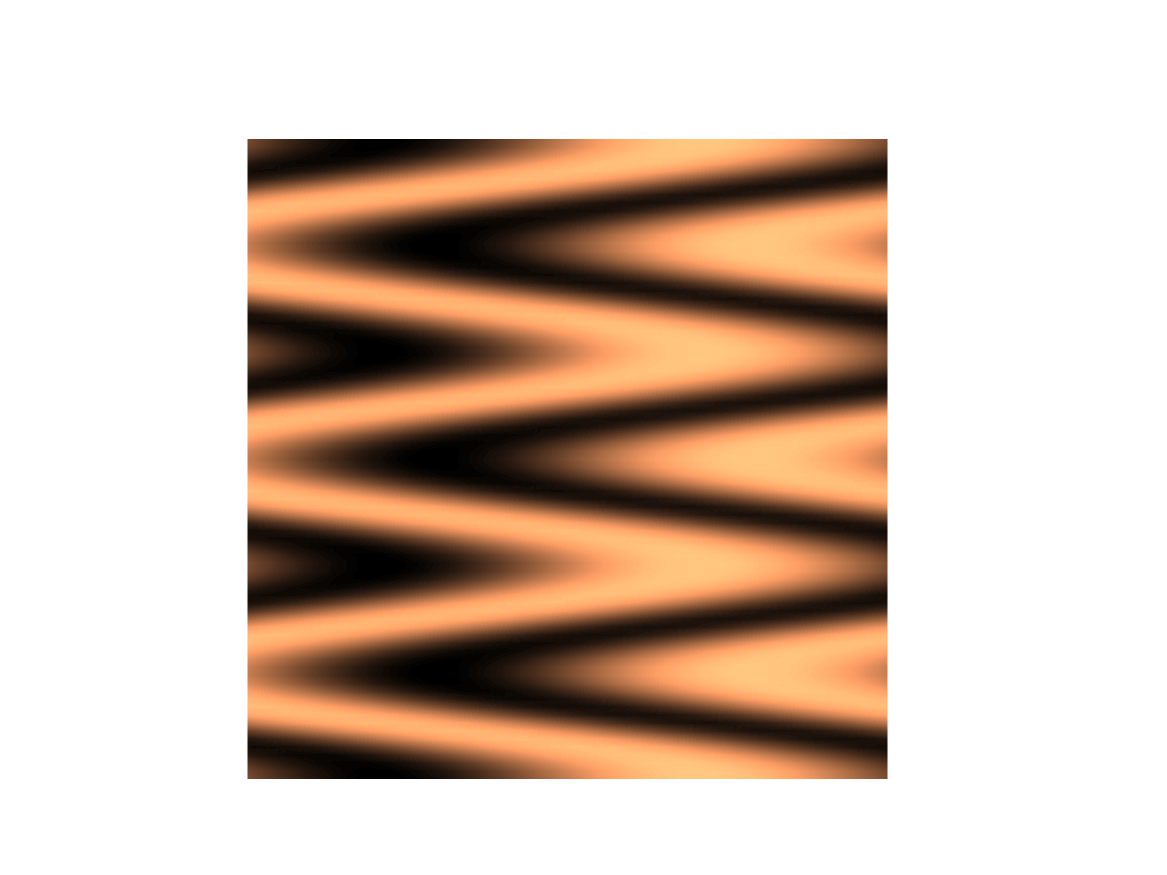}
  \end{subfigure}
    \begin{subfigure}[b]{0.24\linewidth}
    \includegraphics[width=\linewidth]{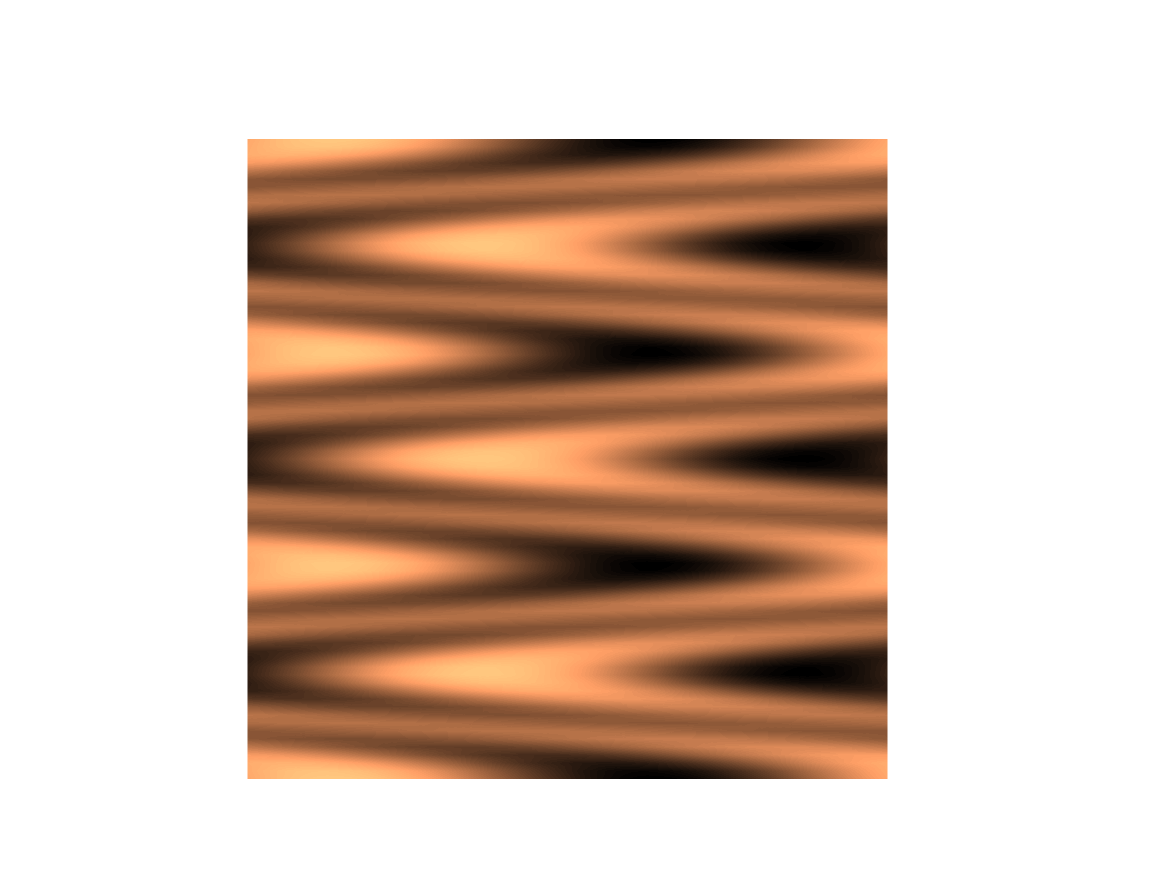}
  \end{subfigure}
  \begin{subfigure}[b]{0.24\linewidth}
    \includegraphics[width=\linewidth]{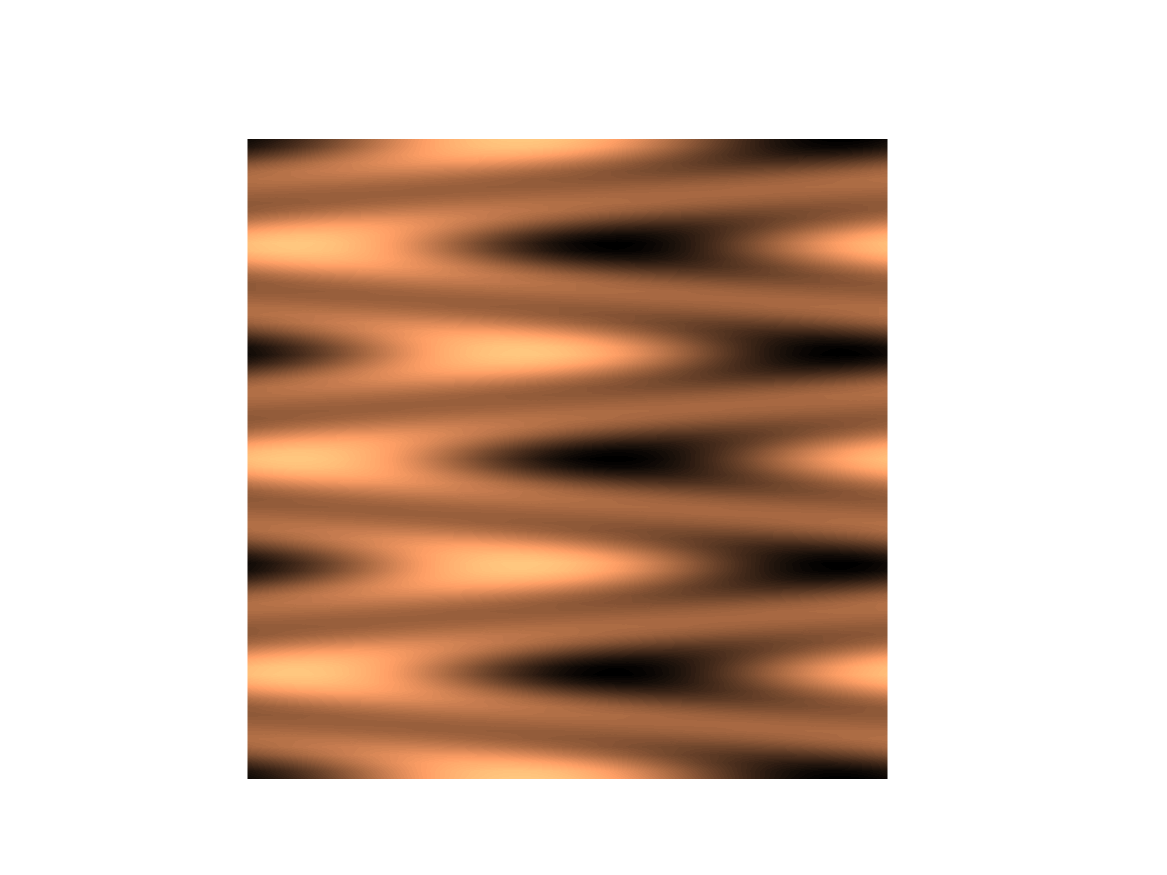}
  \end{subfigure}
  \begin{subfigure}[b]{0.24\linewidth}
    \includegraphics[width=\linewidth]{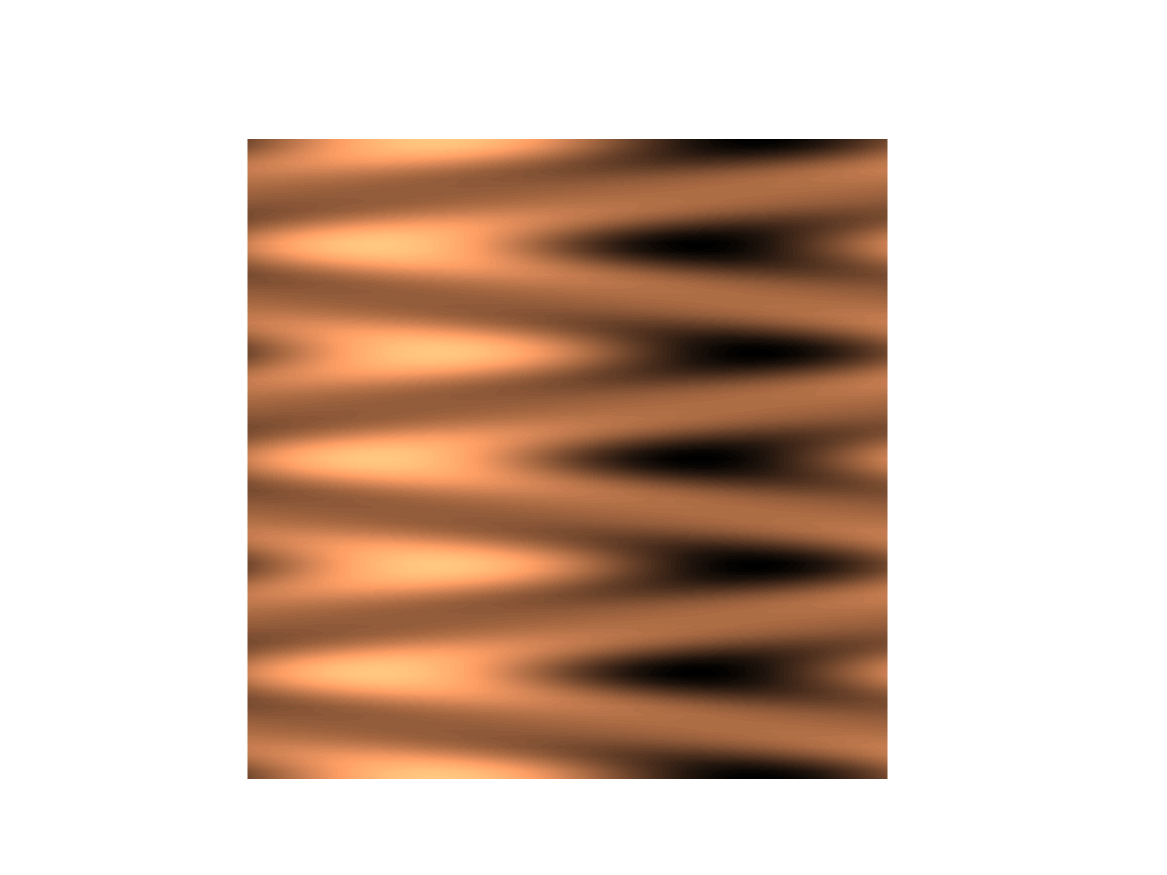}
  \end{subfigure}
  \begin{subfigure}[b]{0.24\linewidth}
    \includegraphics[width=\linewidth]{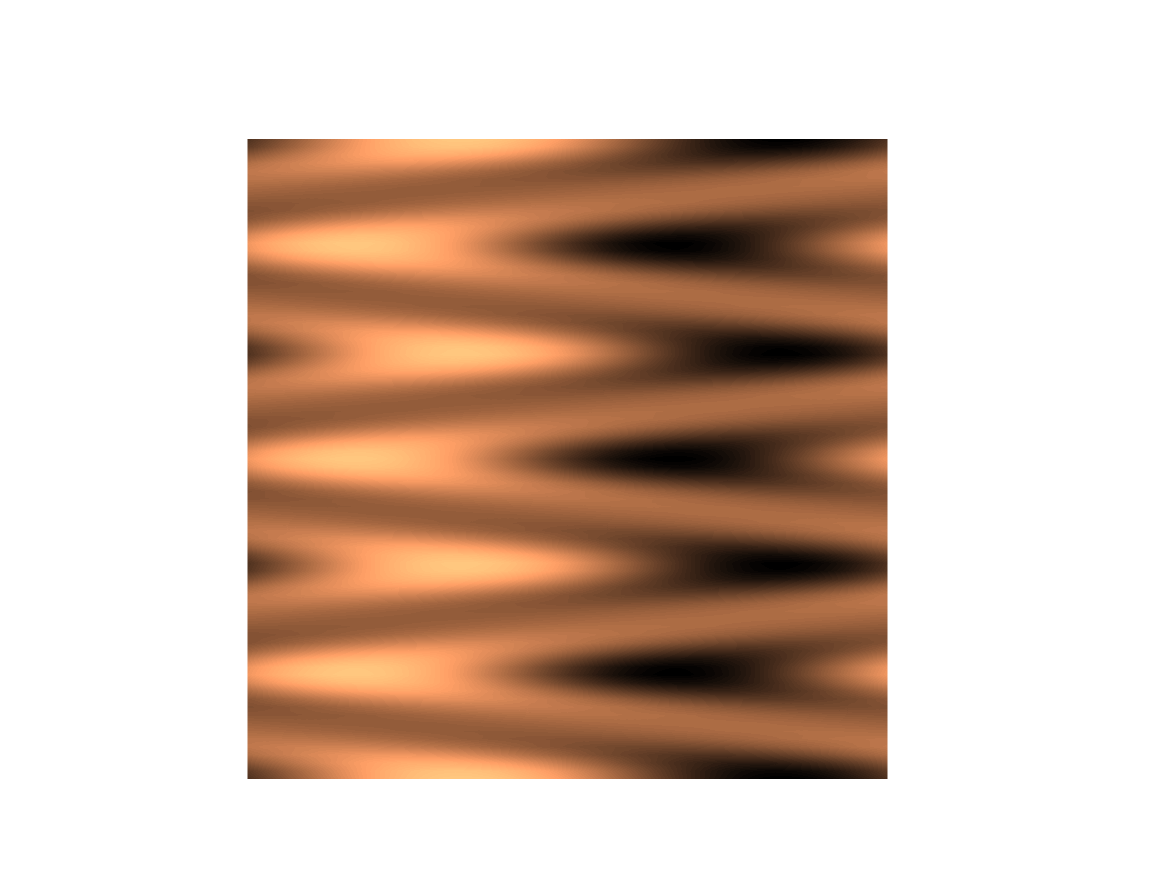}
  \end{subfigure}
  \caption{The evolution of the solution $h$ to \eqref{FP:h1} with $u(y)=-3\cos (3y)$, $v(y)\equiv 0$, $h^{in}(x,y)=\sin x$ and $\nu=10^{-3}$. The simulation has been performed using FreeFem++, with finite elements P1 for the space discretization.}
  \label{fig:shear}
\end{figure}

For time-scales faster than $O(\nu^{1/2}(1+|\ln\nu|))$,  the dominant behavior is mixing by incompressible velocities: \eqref{eq:L2hreal} says that if $\nu$ is chosen small enough, $h$ is very close to its $x$-average, and hence tends to become $x$-independent. This coincides with the appearance of ``horizontal'' stripes. At this point, only diffusion is relevant, and the solution slowly decays to zero at a $\nu$-independent rate, as prescribed by \eqref{eq:L2h0}.

\section{Rates of convergence in homogenization}

In this section we  prove Theorem \ref{thm:main1}. The main ingredient to prove explicit convergence rate is the use
of a second Poisson equation and an auxiliary process that we introduce in the next section. The rates follows from the
Dambis-Dubins-Schwarz theorem (see \cite{KSh91}*{Thm 3.4.6}) on time-change for martingales, together with the
H\"older continuity properties of Brownian motion.

\subsection{Auxiliary processes and the Poisson equation}
In order to prove Theorem \ref{thm:main1}, we derive from \eqref{eq:SDEsystem} the SDEs for the rescaled process
in \eqref{eq:scalegen}, which read
\begin{equation}\label{eq:SDErescaled1}
\begin{aligned}
\dd X^\nu(t) &=\frac{1}{\nu^\beta} u\left(\frac{Y^\nu(t)}{\nu^\beta}\right)\dd t,\\
\dd Y^\nu(t) &=\frac{1}{\nu^\beta} v\left(\frac{Y^\nu(t)}{\nu^\beta}\right)\dd t+\sqrt{2}\, \dd W(t),
\end{aligned}
\end{equation}
with initial conditions
\begin{align}\label{eq:INrescaled}
X^\nu(0)=\nu^{1+\beta} X_0, \qquad Y^\nu(0)=\nu^{\beta} Y_0.
\end{align}
In fact, it is convenient to introduce the auxiliary process 
\begin{align}
R^\nu(t)=\frac{Y^\nu(t)}{\nu^\beta},
\end{align}
and re-write \eqref{eq:SDErescaled1} as an augmented system of the form
\begin{align}
\dd X^\nu(t) &=\frac{1}{\nu^\beta} u\left(R^\nu(t)\right)\dd t,\label{eq:SDErescaledX}\\
\dd Y^\nu(t) &=\frac{1}{\nu^\beta} v\left(R^\nu(t)\right)\dd t+\sqrt{2}\, \dd W(t),\label{eq:SDErescaledY}\\
\dd R^\nu(t) &=\frac{1}{\nu^{2\beta}} v\left(R^\nu(t)\right)\dd t+\frac{\sqrt{2}}{\nu^\beta}\, \dd W(t)\label{eq:SDErescaledZ}.
\end{align}
In what follows, an important role will be played by the generator $\LL$  of the process $R^1$ (i.e. for $\nu=1$), namely the operator
\begin{align}
\LL =v \de_y +\de_{yy}.
\end{align}
Given a function $\phi\in C^1(\T)$ such that
\begin{align}
\int_\T \phi(y)\rho_\infty(y)\dd y=0,
\end{align}
it is not hard to verify that the unique solution to the Poisson equation
\begin{align}\label{eq:STURM}
\LL \chi =\phi, \qquad \int_\T \chi(y)\rho_\infty(y)\dd y=0,
\end{align}
is given by
\begin{align}
\chi(y)=B+\int_0^y\e^{V(y')}\left[A+ \int_0^{y'}\phi(\bar{y})\e^{-V(\bar{y})} \dd \bar{y}\right]\dd y',
\end{align}
where the constant $A,B$ are chosen to enforce periodicity and the weighted mean-zero condition in \eqref{eq:STURM} as
\begin{align}
A=-\left[\int_\T \e^{V(y')}\dd y'\right]^{-1}\int_\T\int_0^{y'}\phi(\bar{y})\e^{V(y')-V(\bar{y})} \dd \bar{y}\,\dd y'
\end{align}
and
\begin{align}
B=-\int_\T\rho_\infty(y)\int_0^y\e^{V(y')}\left[A+ \int_0^{y'}\phi(\bar{y})\e^{-V(\bar{y})} \dd \bar{y}\right]\dd y' \dd y.
\end{align}
Here, the notation is that of \eqref{eq:uV} and \eqref{eq:Gibbs2}. In particular, notice that $\chi\in C^1(\T)$ (at least).

\subsection{Convergence rates for $Y^\nu$}

We begin to deal with the convergence for the process $Y^\nu$.

\begin{lemma}\label{lem:Yconv}
Let $\alpha\in \left(0,\frac12\right)$ and $ p\in \left[\frac{1}{2\alpha},\infty\right)$ be arbitrarily fixed, and assume that
\begin{align}\label{eq:initY}
\E |Y_0|^p<\infty.
\end{align}
Then we have the convergence estimate
\begin{align}
\E \sup_{t\in[0,T]} |Y^\nu(t)- \sqrt{2D_v}\, W(t)|^p \lesssim \nu^{\alpha \beta p},
\end{align} 
where
\begin{align}
D_v= \|1+\de_y\chi_v\|^2=\int_\T \left[1+\de_y\chi_v(y)\right]^2 \rho_\infty(y)\dd y
\end{align}
and $\chi_v:\T\to \R$ is the unique solution to the elliptic equation
\begin{align}\label{eq:Poissonv}
\LL\chi_v=-v, \qquad \int_\T \chi_v(y)\rho_\infty(y)\dd y=0.
\end{align}
\end{lemma}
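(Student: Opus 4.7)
The strategy is a classical Poisson-equation/time-change argument, structured as follows.

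First, use the Poisson equation \eqref{eq:Poissonv} and It\^o's formula applied to $\chi_v(R^\nu(t))$ to eliminate the singular drift in \eqref{eq:SDErescaledY}. Since $R^\nu$ has generator $\nu^{-2\beta}\LL$, It\^o gives
\begin{equation}
\chi_v(R^\nu(t))-\chi_v(Y_0)=-\frac{1}{\nu^{2\beta}}\int_0^t v(R^\nu(s))\dd s+\frac{\sqrt{2}}{\nu^\beta}\int_0^t \chi_v'(R^\nu(s))\dd W(s),
\end{equation}
so after multiplying by $\nu^\beta$ and inserting into the integrated form of \eqref{eq:SDErescaledY} I obtain the key decomposition
\begin{equation}
Y^\nu(t)=\nu^\beta Y_0-\nu^\beta\bigl[\chi_v(R^\nu(t))-\chi_v(Y_0)\bigr]+M^\nu(t),\qquad M^\nu(t):=\sqrt{2}\int_0^t\bigl(1+\chi_v'(R^\nu(s))\bigr)\dd W(s).
\end{equation}
The two deterministic boundary terms are $O(\nu^\beta)$ in $L^p$ using $\E|Y_0|^p<\infty$ and the boundedness of $\chi_v$ on $\T$.

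Second, apply the Dambis--Dubins--Schwarz theorem to represent $M^\nu(t)=\widetilde B^\nu\bigl(\langle M^\nu\rangle_t\bigr)$ for a Brownian motion $\widetilde B^\nu$, and define $W_v(t):=\widetilde B^\nu(2D_v t)/\sqrt{2D_v}$, which is a standard Brownian motion. Then
\begin{equation}
M^\nu(t)-\sqrt{2D_v}\,W_v(t)=\widetilde B^\nu\bigl(\langle M^\nu\rangle_t\bigr)-\widetilde B^\nu(2D_v t),
\end{equation}
so by the $\alpha$-H\"older continuity of Brownian motion on $[0,C T]$ (with H\"older seminorm $K$ satisfying $\E K^q<\infty$ for every $q$),
\begin{equation}
\sup_{t\in[0,T]}\bigl|M^\nu(t)-\sqrt{2D_v}\,W_v(t)\bigr|\leq K\sup_{t\in[0,T]}\bigl|\langle M^\nu\rangle_t-2D_v t\bigr|^\alpha.
\end{equation}

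Third, control the quadratic variation deviation by a \emph{second} Poisson equation. Since $D_v$ is by construction the $\rho_\infty$-mean of $(1+\chi_v')^2$, the function $F:=(1+\chi_v')^2-D_v$ is mean-zero, and the analogue of \eqref{eq:STURM} yields $\psi_v\in C^1(\T)$ with $\LL\psi_v=F$. It\^o on $\psi_v(R^\nu)$ gives
\begin{equation}
\int_0^t\!\bigl[(1+\chi_v'(R^\nu(s)))^2-D_v\bigr]\dd s=\nu^{2\beta}\bigl[\psi_v(R^\nu(t))-\psi_v(Y_0)\bigr]-\sqrt{2}\nu^\beta\!\int_0^t\!\psi_v'(R^\nu(s))\dd W(s),
\end{equation}
so that the Burkholder--Davis--Gundy inequality combined with boundedness of $\psi_v,\psi_v'$ on $\T$ yields
\begin{equation}
\E\sup_{t\in[0,T]}\bigl|\langle M^\nu\rangle_t-2D_v t\bigr|^q\lesssim \nu^{\beta q}\qquad\text{for every }q\in[1,\infty).
\end{equation}

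Finally, chain these estimates via Cauchy--Schwarz: $\E\sup|M^\nu-\sqrt{2D_v}W_v|^p\leq (\E K^{2p})^{1/2}(\E\sup|\langle M^\nu\rangle_t-2D_v t|^{2\alpha p})^{1/2}\lesssim \nu^{\alpha\beta p}$, and combine with the $O(\nu^{\beta p})$ bound on the boundary and initial terms, which is absorbed since $\alpha<1/2<1$.

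\textbf{Main obstacle.} The delicate step is the H\"older-continuity argument for the time-changed Brownian motion: one has to use a modulus-of-continuity estimate for $\widetilde B^\nu$ with $L^{2p}$ control on the random H\"older constant, and here the integrability constraint $p\ge 1/(2\alpha)$ enters through the Kolmogorov--Chentsov-type estimate that guarantees a genuine $\alpha$-H\"older modulus with finite $L^{2p}$ moments. Apart from that, checking that both Poisson solutions $\chi_v,\psi_v$ are $C^1(\T)$ (hence with bounded derivatives) from the explicit formula in the previous subsection is routine given $v,u\in C^1$.
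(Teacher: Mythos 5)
Your proof is correct and follows essentially the same route as the paper: you use the Poisson equation \eqref{eq:Poissonv} plus It\^o's formula on $\chi_v(R^\nu)$ to remove the $O(\nu^{-\beta})$ drift, then Dambis--Dubins--Schwarz together with the H\"older modulus of Brownian motion to reduce the error to the quadratic-variation deviation, a second Poisson problem for $(1+\chi_v')^2-D_v$ and Burkholder--Davis--Gundy to bound that deviation, and finally Cauchy--Schwarz. You are actually somewhat more careful than the paper at the DDS step (you explicitly construct the target Brownian motion $W_v(t)=\widetilde B^\nu(2D_vt)/\sqrt{2D_v}$ from the DDS clock, where the paper writes the time-change ``in law'' with a mild abuse of notation). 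One small inaccuracy in your ``Main obstacle'' remark: the constraint $p\geq 1/(2\alpha)$ is not needed for moment bounds on the H\"older modulus of Brownian motion, which are finite for every order by Kolmogorov--Chentsov; in the paper it is invoked to keep the exponent $q=2\alpha p\geq 1$ when bounding $\E\sup_t|\int_0^t g(R^\nu)\dd s|^q$ via the decomposition and BDG. This does not affect the validity of your argument.
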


\begin{proof}
First of all, notice that in light of \eqref{eq:uV} and \eqref{eq:Gibbs2}, we have
\begin{align}
\int_\T v(y)\rho_\infty(y)\dd y =-\frac{1}{Z}\int_\T V'(y) \e^{- V(y)}\dd y=\frac{1}{Z}\int_\T \de_y\left(\e^{- V(y)}\right)\dd y=0.
\end{align}
As a consequence, the unique solution to \eqref{eq:Poissonv} satisfies $\chi_v\in C^1(\T)$.
Then, using Ito's formula we find
\begin{align}
\dd \chi_v (R^\nu)&=\frac{1}{\nu^{2\beta}} \LL \chi_v\left(R^\nu\right)+\frac{\sqrt{2}}{\nu^{\beta}} \de_y\chi_v(R^\nu) \dd W
=-\frac{1}{\nu^{2\beta}}  v\left(R^\nu\right) \dd t +\frac{\sqrt{2}}{\nu^{\beta}} \de_y\chi_v(R^\nu) \dd W.
\end{align}
In turn,
\begin{align}
\frac{1}{\nu^{\beta}}\int_0^t  v\left(R^\nu(s)\right) \dd s = -\nu^{\beta} \left[\chi_v (R^\nu(t)) - \chi_v (R^\nu(0))\right]+\sqrt{2} \int_0^t \de_y\chi_v(R^\nu(s)) \dd W(s).
\end{align}
Substituting in the equation for $Y^\nu$, we arrive at
\begin{align}
Y^\nu(t)=Y^\nu(0)   -\nu^{\beta} \left[\chi_v (R^\nu(t)) - \chi_v (R^\nu(0))\right]+\sqrt{2} \int_0^t \left[1+\de_y\chi_v(R^\nu(s)) \right]\dd W(s).
\end{align}
Thus
\begin{align}
Y^\nu(t)- \sqrt{2D_v}\, W(t)&=Y^\nu(0)    -\nu^{\beta} \left[\chi_v (R^\nu(t)) - \chi_v (R^\nu(0))\right]\notag\\
&\quad+\sqrt{2} \int_0^t \left[1+\de_y\chi_v(R^\nu(s)) -\sqrt{D_v}\right]\dd W(s)
\end{align}
Hence, for any $p\geq 1$ we have
\begin{align}\label{eq:Yeps}
|Y^\nu(t)- \sqrt{2D_v}\, W(t)|^p &\lesssim |Y^\nu(0)|^p  + \nu^{ p\beta} \left[|\chi_v (R^\nu(t))|^p +|\chi_v (R^\nu(0))|^p\right]\notag\\
&\quad+ \left|\int_0^t \left(1+\de_y\chi_v(R^\nu(s)) -\sqrt{D_v}\right)\dd W(s)\right|^p.
\end{align} 
The first two terms are essentially harmless, given the fact that $\chi_v$ is at least $C^1$, the assumption \eqref{eq:initY} on the initial condition and the rescaling  \eqref{eq:INrescaled} . To estimate the last term,  first note that  the Dambis-Dubins-Schwarz theorem (see \cite{KSh91}*{Thm 3.4.6})
implies that in law we have
\begin{align}
\int_0^t \left(1+\de_y\chi_v(R^\nu(s)) -\sqrt{D_v}\right)\dd W(s) = W\left( \int_0^t \left(1+\de_y\chi_v(R^\nu(s)) \right)^2\dd s \right) -W(D_vt).
\end{align}
Hence, by the H\"older-continuity of Brownian motion and the Cauchy-Schwarz inequality inequality we deduce that
\begin{align}\label{eq:compafsed}
&\E \sup_{t\in[0,T]} \left|\int_0^t \left(1+\de_y\chi_v(R^\nu(s)) -\sqrt{D_v}\right)\dd W(s)\right|^p\notag\\
&\qquad\qquad=\E \sup_{t\in[0,T]} \left|W\left( \int_0^t \left(1+\de_y\chi_v(R^\nu(s)) \right)^2\dd s \right) -W(D_vt)\right|^p \notag\\
&\qquad\qquad\lesssim \E \left[\mathrm{Hol}^p_\alpha (W(t)) \sup_{t\in[0,T]}  \left|\left( \int_0^t \left(1+\de_y\chi_v(R^\nu(s)) \right)^2\dd s \right) -D_vt\right|^{\alpha p}  \right] \notag\\
&\qquad\qquad\lesssim    \left(\E\left[ \sup_{t\in[0,T]}  \left|\left( \int_0^t \left(1+\de_y\chi_v(R^\nu(s)) \right)^2\dd s \right) -D_vt\right|^{2\alpha p}  \right]\right)^{1/2} \notag\\
&\qquad\qquad\lesssim    \left(\E\left[ \sup_{t\in[0,T]}  \left|\left( \int_0^t g(R^\nu(s))\dd s \right)\right|^{2\alpha p}  \right]\right)^{1/2},
\end{align} 
where $\mathrm{Hol}_\alpha (W(t))$ is the H\"older constant of the Brownian motion $W(t)$ and we have conveniently defined
\begin{align}
g(y)=\left(1+\de_y\chi_v(y) \right)^2-D_v.
\end{align}
In this way,
\begin{align}
\int_\T g(y)\rho_\infty(y)\dd y=0.
\end{align}
Let now $\chi_g\in C^1(\T)$ be the unique solution to
\begin{align}
\LL \chi_g=-g, \qquad \int_\T \chi_g(y)\rho_\infty(y)\dd y=0.
\end{align}
As before, using Ito's formula we find
\begin{align}
\dd \chi_g (R^\nu)=-\frac{1}{\nu^{2\beta}}  g\left(R^\nu\right) \dd t +\frac{\sqrt{2}}{\nu^{\beta}} \de_y\chi_g(R^\nu) \dd W.
\end{align}
In turn,
\begin{align}
\int_0^t  g\left(R^\nu(s)\right) \dd s = -\nu^{2\beta} \left[\chi_g  (R^\nu(t)) - \chi_g  (R^\nu(0))\right]+\nu^\beta\sqrt{2}\int_0^t \de_y\chi_g (R^\nu(s)) \dd W(s).
\end{align}
Using the Burkholder-Davis-Gundy inequality, we then find for any $q\geq 1$ that
\begin{align}
\E \sup_{t\in [0,T]}\left|\int_0^t  g\left(R^\nu(s)\right) \dd s \right|^{q} 
&\lesssim \nu^{2q\beta}\,\E \sup_{t\in [0,T]} \left[ |\chi_g (R^\nu(t))|^{q} + |\chi_g (R^\nu(0))|^{q}\right]\notag\\
&\quad+\nu^{q\beta}\E \sup_{t\in [0,T]}\left|\int_0^t \de_y\chi_g(R^\nu(s)) \dd W(s)\right|^q\notag\\
&\lesssim \nu^{2q\beta}+\nu^{q\beta} \int_0^T \E\left|\de_y\chi_g(R^\nu(s)) \right|^q\dd s\lesssim \nu^{q\beta}.
\end{align}
Going back to \eqref{eq:compafsed} and using the above bound (here is where we need $2\alpha p\geq 1$),  we find
\begin{align}
\E \sup_{t\in[0,T]} \left|\int_0^t \left(1+\de_y\chi_v(R^\nu(s)) -\sqrt{D_v}\right)\dd W(s)\right|^p
\lesssim    \nu^{p\alpha\beta}
\end{align} 
From \eqref{eq:Yeps} and using the rescaling of the initial data \eqref{eq:INrescaled}, we take the supremum in time and expectations to deduce the desired estimate
\begin{align}
\E \sup_{t\in[0,T]} |Y^\nu(t)- \sqrt{2D_v}\, W(t)|^p
& \lesssim  \E |Y^\nu(0)|^p+ \nu^{p\beta} \E \sup_{t\in[0,T]} \left[|\chi_v (R^\nu(t))|^p +|\chi_v (R^\nu(t))|^p\right]+\nu^{ p\alpha\beta}\notag\\
&\lesssim \nu^{p\beta}\E |Y_0|^p+\nu^{ p\alpha\beta}.
\end{align} 
This concludes the proof.
\end{proof}

\subsection{Convergence rates for $X^\nu$}
We now turn to the process $X^\nu$. The proof is somewhat similar, so we will only highlight the main points.
\begin{lemma}\label{lem:Xconv}
Let $\alpha\in \left(0,\frac12\right)$ and $ p\in \left[\frac{1}{2\alpha},\infty\right)$ be arbitrarily fixed, and assume that
\begin{align}
\E |X_0|^p<\infty.
\end{align}
Then we have the convergence estimate
\begin{align}
\E \sup_{t\in[0,T]} |X^\nu(t)- \sqrt{2D_u}\, W(t)|^p \lesssim \nu^{\alpha \beta p},
\end{align} 
where
\begin{align}\label{eq:Du}
D_u= \|\de_y\chi_u\|^2
\end{align}
and $\chi_u:\T\to \R$ is the unique solution to the elliptic equation
\begin{align}\label{eq:Poissonu}
\LL\chi_u=-u, \qquad \int_\T \chi_u(y)\rho_\infty(y)\dd y=0.
\end{align}
\end{lemma}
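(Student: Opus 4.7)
The strategy mirrors that of Lemma \ref{lem:Yconv}, the only structural difference being that the $X^\nu$ equation \eqref{eq:SDErescaledX} carries no direct Brownian noise, so the effective diffusivity comes entirely from the martingale produced by applying Ito's formula to the corrector $\chi_u$. I would first note that the centering hypothesis \eqref{eq:meanzerou} is exactly the solvability condition for the Poisson problem \eqref{eq:Poissonu}, so $\chi_u\in C^1(\T)$ is well defined. Applying Ito's formula to $\chi_u(R^\nu)$, using $\LL\chi_u=-u$ and \eqref{eq:SDErescaledZ}, gives
\begin{align}
\dd \chi_u(R^\nu) = -\frac{1}{\nu^{2\beta}} u(R^\nu)\dd t + \frac{\sqrt{2}}{\nu^\beta}\de_y\chi_u(R^\nu)\dd W,
\end{align}
and rearranging and multiplying by $\nu^\beta$ converts the drift integral $\nu^{-\beta}\int_0^t u(R^\nu(s))\dd s$ that appears in \eqref{eq:SDErescaledX} into a boundary term of order $\nu^\beta$ plus the martingale $\sqrt{2}\int_0^t\de_y\chi_u(R^\nu(s))\dd W(s)$. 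Substituting into the integrated form of $X^\nu$ yields
\begin{align}
X^\nu(t)-\sqrt{2D_u}\,W(t) = X^\nu(0)-\nu^\beta\bigl[\chi_u(R^\nu(t))-\chi_u(R^\nu(0))\bigr]+\sqrt{2}\int_0^t\bigl[\de_y\chi_u(R^\nu(s))-\sqrt{D_u}\bigr]\dd W(s).
\end{align}

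The initial and boundary terms are harmless: by \eqref{eq:INrescaled} one has $\E|X^\nu(0)|^p=\nu^{(1+\beta)p}\E|X_0|^p$, and since $\chi_u$ is bounded the term $\nu^{\beta p}$ is even smaller than the target rate $\nu^{\alpha\beta p}$ (for $\alpha<1/2<1$). The main term is the stochastic integral. To handle it, I would invoke Dambis--Dubins--Schwarz to write, in law,
\begin{align}
\int_0^t\bigl[\de_y\chi_u(R^\nu(s))-\sqrt{D_u}\bigr]\dd W(s)=\tW\!\left(\int_0^t(\de_y\chi_u(R^\nu(s)))^2\dd s\right)-\tW(D_u t),
\end{align}
for a Brownian motion $\tW$, and bound the difference by $\mathrm{Hol}_\alpha^p(\tW)$ times the $\alpha p$-th power of the deviation between the quadratic variation and its deterministic limit $D_u t$, exactly as in \eqref{eq:compafsed}.

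The key quantitative step is then a secondary Poisson equation: setting
\begin{align}
g(y)=(\de_y\chi_u(y))^2-D_u,
\end{align}
the definition \eqref{eq:Du} of $D_u$ precisely ensures $\int_\T g(y)\rho_\infty(y)\dd y=0$, so there is a unique $\chi_g\in C^1(\T)$ with $\LL\chi_g=-g$ and zero weighted mean. Applying Ito's formula to $\chi_g(R^\nu)$ in the same way as in the proof of Lemma \ref{lem:Yconv}, and then using the Burkholder--Davis--Gundy inequality on the resulting stochastic integral, gives
\begin{align}
\E\sup_{t\in[0,T]}\left|\int_0^t g(R^\nu(s))\dd s\right|^q\lesssim \nu^{q\beta},
\end{align}
for any $q\ge 1$. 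Choosing $q=2\alpha p\ge 1$ (which is the reason for the hypothesis $p\ge 1/(2\alpha)$) and combining with Cauchy--Schwarz and the moments of the Brownian Hölder seminorm produces the bound $\nu^{\alpha\beta p}$ for the expectation of the supremum of the stochastic integral, and hence the desired estimate after taking supremum in $t$ and expectations. The only point that requires some care, compared with Lemma \ref{lem:Yconv}, is the absence of the ``$+1$'' in the integrand, but this is purely cosmetic: it simply replaces $(1+\de_y\chi_v)^2-D_v$ with $(\de_y\chi_u)^2-D_u$, and both satisfy the centering needed for the auxiliary Poisson problem to be solvable.
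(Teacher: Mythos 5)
Your proof is correct and follows essentially the same route as the paper: apply It\^o's formula to the corrector $\chi_u$ to absorb the singular drift into a boundary term and a martingale, then control the deviation of the martingale from $\sqrt{2D_u}\,W$ via Dambis--Dubins--Schwarz, H\"older continuity of Brownian motion, and a secondary Poisson equation for $g=(\partial_y\chi_u)^2-D_u$ combined with Burkholder--Davis--Gundy. The paper compresses these steps to ``by repeating the same proof as in Lemma~\ref{lem:Yconv}''; you have simply filled them in, and your observation that dropping the ``$+1$'' in the integrand is the only change (reflecting the absence of direct noise in the $X$ equation) is exactly the point the paper's remark makes as well.
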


\begin{proof}
From \eqref{eq:INrescaled} and \eqref{eq:SDErescaledX}, we have that
\begin{align}
X^\nu(0)=\nu^{1+\beta} X_0 +\frac{1}{\nu^{\beta}}\int_0^{t} u \left(R^\nu(s)\right) \dd s.
\end{align}
Therefore, using assumption \eqref{eq:meanzerou}, \eqref{eq:Poissonu}  and the Ito's formula we infer that
\begin{align}
\dd \chi_u (R^\nu)=-\frac{1}{\nu^{2\beta}}  u\left(R^\nu\right) \dd t +\frac{\sqrt{2}}{\nu^{\beta}} \de_y\chi_u(R^\nu) \dd W.
\end{align}
As before,
\begin{align}
\frac{1}{\nu^{\beta}}\int_0^t  u\left(R^\nu(s)\right) \dd s = -\nu^\beta \left[\chi_u (R^\nu(t)) - \chi_u (R^\nu(0))\right]+ \sqrt{2}  \int_0^t \de_y\chi_u(R^\nu(s)) \dd W_s,
\end{align}
so that
\begin{align}
X^\nu(t)-\sqrt{2D_u}\, W(t)=&\nu^{1+\beta} X_0 -\nu^\beta \left[\chi_u (R^\nu(t)) - \chi_u (R^\nu(0))\right]\notag\\
&+ \sqrt{2}  \int_0^t \left[\de_y\chi_u(R^\nu(s))-\sqrt{D_u} \right]\dd W(s).
\end{align}
By repeating the same proof as in Lemma \ref{lem:Yconv}, we arrive at
\begin{align}
\E \sup_{t\in[0,T]} \left|\int_0^t \left(\de_y\chi_u(R^\nu(s)) -\sqrt{D_u}\right)\dd W(s)\right|^p
\lesssim    \nu^{p\alpha\beta}.
\end{align} 
Hence, the convergence estimate follows immediately from
\begin{align}
\E \sup_{t\in[0,T]} |X^\nu(t)-\sqrt{2D_u}\, W(t)|^p \lesssim  \nu^{p(1+\beta)}\E |X_0|^p  + \nu^{ p\beta} \E \sup_{t\in[0,T]} \left[|\chi_u (R^\nu(t))|^p +|\chi_u (R^\nu(0))|^p\right]+ \nu^{p\alpha \beta },
\end{align} 
and the proof is over.
\end{proof}
It is clear that Theorem \ref{thm:main1} is simply a combination of Lemma \ref{lem:Yconv} and Lemma \ref{lem:Xconv}. 

%
%
\section{Enhanced diffusion with slightly compressible perturbations}
This section is devoted to the proof of Theorem \ref{thm:main2}. Instead of working with \eqref{FP:h1}, we rescale time by defining
\begin{align}\label{eq:time-scaling}
f(t,x,y)=h(\nu t,x,y),
\end{align}
so that from \eqref{FP:h1} we infer that $f$ satisfies
\begin{align}\label{FP:f1}
\de_t f+ u\de_x f=\nu\left(\de_{yy}f-v\de_y f\right),\qquad f(0,x,y)=f^{in}(x,y)=h^{in}(x,y).
\end{align}
Clearly, the  Fourier-decomposition  in \eqref{eq:band} is preserved, and therefore we can write the above as
\begin{align}\label{FP:f1four}
\de_t f_k+ u\de_x f_k=\nu\left(\de_{yy}f_k-v\de_y f_k\right),\qquad f_k(0,y)=f_k^{in}(y),
\end{align}
having in mind the localization to band $k\in\N$.
The analogous of Theorem \ref{thm:main2} for $f$ is the following.

\begin{theorem}\label{thm:main3}
There exist constants $\nu_0,\eps_0\in (0,1)$  such that the following holds:
there exist positive numbers $\aa_0,\bb_0,\cc_0$ only depending on $\eps_0$
such that for each integer $k\neq 0$ and $\nu>0$ with
$\nu k^{-1}\leq \nu_0$
 the energy functional
\begin{align}\label{eq:PHIkk}
\Phi_k=\frac12\left[\|f_k\|^2 + \frac{\nu^{1/2}\aa_0}{k^{1/2}} \|\de_y f_k\|^2+\frac{2\bb_0}{k}  \l  u' \de_x f_k, \de_y f_k \r+\frac{\cc_0}{\nu^{1/2}k^{3/2}} \|  u' \de_x f_k\|^2 \right]
\end{align}
satisfies the differential inequality 
\begin{align}\label{eq:diffPHI2}
&\ddt \Phi_k+\eps_0\nu^{1/2}k^{1/2}\Phi_k+\frac{\aa_0\nu^{1/2}}{2k^{1/2}} \| \de_{yy}f_k-v\de_y f_k\|^2+\frac{\cc_0}{2\nu^{1/2}k^{3/2}}\|  u'\de_{xy} f_k\|^2 \leq 0.
\end{align}
for all $t\geq0$.
In particular
\begin{align}
\Phi_k(t) \leq  \e^{-\eps_0\nu^{1/2}k^{1/2}t}\Phi_k(0), \qquad \forall t\geq 0.
\end{align}
\end{theorem}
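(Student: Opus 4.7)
The proof is a Villani-type hypocoercivity argument applied directly to the modified energy $\Phi_k$. The point is that in the equation $\de_t f_k+u\de_x f_k=\nu\cS f_k$, with $\cS$ the symmetric diffusion part satisfying the Dirichlet identity $\langle g,\cS g\rangle=-\|\de_y g\|^2$, the transport $u\de_x$ is antisymmetric in $L^2_{\rho_\infty}$ and the diffusion acts only on $y$, so coercivity along the degenerate $x$-direction must be manufactured from the commutator $[u\de_x,\de_y]=u'\de_x$. The cross term $\langle u'\de_x f_k,\de_y f_k\rangle$ in $\Phi_k$ is exactly the H\"ormander-bracket pairing that effects this transfer, and the weights $\nu^{1/2}k^{-1/2}$, $k^{-1}$, $\nu^{-1/2}k^{-3/2}$ are calibrated so that the resulting functional scales with the enhanced-dissipation rate $\sqrt{\nu k}$ known for shears with simple critical points, in the spirit of \cite{BW13,BCZ15}.

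I would differentiate each of the four pieces of $\Phi_k$ in turn, exploiting antisymmetry of $u\de_x$ in $L^2_{\rho_\infty}$ and the Dirichlet-form identity to obtain
\begin{align*}
\ddt\tfrac12\|f_k\|^2 &= -\nu\|\de_y f_k\|^2,\\
\ddt\tfrac12\|\de_y f_k\|^2 &= -\nu\|\cS f_k\|^2-\langle u'\de_x f_k,\de_y f_k\rangle,\\
\ddt\langle u'\de_x f_k,\de_y f_k\rangle &= -\|u'\de_x f_k\|^2+\nu R_1,\\
\ddt\tfrac12\|u'\de_x f_k\|^2 &= -\nu\|u'\de_{xy}f_k\|^2+\nu R_2.
\end{align*}
The decisive cancellation lives in the third line: after commuting $u'\de_x$ through the equation, the two convective contributions $-\langle uu'\de_{xx}f_k,\de_y f_k\rangle$ and $-\langle u'\de_x f_k,u\de_{xy}f_k\rangle$ are exact opposites after integrating by parts in $x$ (using that $u,u',\rho_\infty$ are $x$-independent), so they drop out and leave only the good term $-\|u'\de_x f_k\|^2$. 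The same $x$-antisymmetry kills the convective piece in the fourth line. The remainders $R_1,R_2$ are quadratic forms built from $u',u'',v$ paired against $y$-derivatives of $f_k$ of order at most two.

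Assembling these four identities with the weights from $\Phi_k$ and controlling the only sign-indefinite term by Young's inequality,
\begin{equation*}
\tfrac{\nu^{1/2}\aa_0}{k^{1/2}}\bigl|\langle u'\de_x f_k,\de_y f_k\rangle\bigr|\le \tfrac{\bb_0}{2k}\|u'\de_x f_k\|^2+\tfrac{\nu\aa_0^2}{2\bb_0}\|\de_y f_k\|^2,
\end{equation*}
and choosing $\aa_0^2\ll\bb_0$ so that the second piece is absorbed into $\nu\|\de_y f_k\|^2$ from the first line, one obtains an intermediate inequality of the form
\begin{equation*}
\ddt\Phi_k + c_\star\Bigl[\nu\|\de_y f_k\|^2+\tfrac{1}{k}\|u'\de_x f_k\|^2\Bigr] + \tfrac{\aa_0\nu^{1/2}}{2k^{1/2}}\|\cS f_k\|^2 + \tfrac{\cc_0}{2\nu^{1/2}k^{3/2}}\|u'\de_{xy}f_k\|^2 \le \nu\,\mathrm{Err}(f_k),
\end{equation*}
where $\mathrm{Err}(f_k)$ collects $R_1,R_2$ and the remaining lower-order cross terms. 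Applying Cauchy--Schwarz and Young term-by-term shows that, provided $\nu/k\le\nu_0$ with $\nu_0$ small enough, $\nu\,\mathrm{Err}(f_k)$ can be absorbed into the positive dissipations on the left; this fixes $\aa_0,\bb_0,\cc_0$ and $\nu_0$ in that order.

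The main obstacle, and the step that genuinely produces the rate $\sqrt{\nu k}$, is the weighted Poincar\'e-type inequality
\begin{equation*}
\nu\|\de_y g\|^2+k\|u' g\|^2 \gtrsim \sqrt{\nu k}\,\|g\|^2,\qquad g\in L^2_{\rho_\infty}(\T),\ \textstyle\int_\T g\rho_\infty\,\dd y=0,
\end{equation*}
applied mode-by-mode to the Fourier coefficient $g=\widehat f_k(\cdot,y)$. The non-degeneracy assumption $u''(y_{\mathrm{crit}})\neq 0$ is exactly what makes this inequality available: it forces the linear vanishing $u'(y)\sim c(y-y_{\mathrm{crit}})$ near each critical point, after which one partitions $\T$ into a ``bad'' neighbourhood of width $\delta\sim (\nu/k)^{1/4}$ around each critical point (on which the standard Poincar\'e inequality against $\nu\|\de_y g\|^2$ suffices) and a ``good'' region $\{|u'|\gtrsim\delta\}$ (on which $k\|u' g\|^2\gtrsim k\delta^2\|g\|^2$); optimising in $\delta$ delivers the sharp scaling. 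Feeding this back into the assembled inequality produces $\ddt\Phi_k+\eps_0\sqrt{\nu k}\,\Phi_k\le 0$, and Gr\"onwall concludes.
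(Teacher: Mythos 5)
Your proposal follows essentially the same hypocoercivity scheme as the paper's proof: the same four energy identities with the commutator $u'\de_x$ transferring dissipation to the degenerate $x$-direction, the same Young-inequality absorption of the sign-indefinite cross term under the constraint $\aa_0^2\lesssim\bb_0$, the same rescaling $\aa\sim\nu^{1/2}k^{-1/2}$, $\bb\sim k^{-1}$, $\cc\sim\nu^{-1/2}k^{-3/2}$, and the same weighted spectral inequality --- which the paper imports directly from \cite{BCZ15}, Proposition 2.7 --- as the engine producing the rate $\sqrt{\nu k}$.

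There is, however, one slip worth flagging. You state the spectral inequality under the hypothesis $\int_\T g\,\rho_\infty\,\dd y=0$ and then apply it to the $y$-profile of the nonzero $x$-mode $f_k$; but for $k\ne 0$ these profiles satisfy \emph{no} mean-zero condition in $y$ --- only the $k=0$ mode is centered, cf.\ \eqref{eq:meanzerof0} --- so as written the step is not licensed. The inequality the paper actually uses,
\begin{equation}
\sigma\|g\|^2\le C_0\left[\sigma^2\|\de_y g\|^2+\|u'g\|^2\right],
\end{equation}
holds for all $H^1(\T)$ functions and all sufficiently small $\sigma>0$ with \emph{no} centering hypothesis: coercivity away from the critical set is supplied by $\|u'g\|^2$ thanks to the non-degeneracy $u''(y_{\mathrm{crit}})\neq 0$, and the finitely many $O(\sigma^{1/2})$-neighbourhoods of critical points are handled by a one-dimensional Sobolev/Agmon interpolation that costs a multiple of $\sigma^2\|\de_y g\|^2$, not by a Poincar\'e inequality on a fixed-mean subspace. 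Once the spurious mean-zero hypothesis is dropped, your argument coincides with the paper's.
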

It is clear that Theorem \ref{thm:main2} is a straightforward consequence of the above result, provided we rescale time according to \eqref{eq:time-scaling}. Notice that finiteness of $\Phi_k(0)$  requires $f^{in}_k,\de_yf^{in}_k\in L^2_{\rho_\infty}$ for each $k\in \ZZ$. 
At the cost of a logarithmic  loss on the rate, it is possible to relax this requirement. 

\begin{corollary}\label{cor:main3}
There exist constants $\nu_0,\eps_0\in (0,1)$ and $c_0>1$ such that the following holds:
for each integer $k\neq 0$ and $\nu>0$ with
$\nu k^{-1}\leq \nu_0$
there holds the estimate
\begin{align}
\|f_k(t)\|^2 \leq  c_0 \|f^{in}_k\|^2 \e^{-\eps_0\frac{\nu^{1/2}k^{1/2}}{1+|\ln\nu|+\ln k}t}  , \qquad \forall t\geq 0.
\end{align}
\end{corollary}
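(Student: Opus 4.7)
The plan is to deduce Corollary~\ref{cor:main3} from Theorem~\ref{thm:main3} by a short parabolic-smoothing argument: starting from $L^2_{\rho_\infty}$ initial data, dissipation in $y$ over a short interval will create a time $s_0$ at which the functional $\Phi_k$ is finite, and the cost of this wait will be exactly the logarithmic factor in the exponential rate.

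First I will test \eqref{FP:f1four} against $f_k$ in $L^2_{\rho_\infty}$; the skew-symmetry of $u\de_x$ and the symmetry of the second-order part give the identity
\[
\tfrac{1}{2}\ddt \|f_k\|^2 = -\nu\|\de_y f_k\|^2,
\]
from which I obtain the monotonicity $\|f_k(t)\|\leq \|f_k^{in}\|$ for all $t\geq 0$ and, on integrating over $[0,t_0]$ with $t_0:=1/k$, the existence (by the mean value theorem) of a time $s_0\in[0,1/k]$ at which $\|\de_y f_k(s_0)\|^2\leq k\|f_k^{in}\|^2/(2\nu)$.

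Next I will estimate $\Phi_k(s_0)$. The essential inputs are the band-localization identity $\|\de_x f_k\|=k\|f_k\|$ (since $f_k$ lives on frequencies $\pm k$), the bound $\|u'\|_\infty<\infty$ (from $u\in C^2(\T)$), and Cauchy--Schwarz on the mixed term of \eqref{eq:PHIkk}. Each of the four summands then collapses to at most a constant multiple of $\|f_k^{in}\|^2 k^{1/2}\nu^{-1/2}$, even the bare $\|f_k\|^2$ term (using that $k^{1/2}\nu^{-1/2}\geq \nu_0^{-1/2}>1$ under the hypothesis $\nu k^{-1}\leq\nu_0$). The constants $\aa_0,\bb_0,\cc_0$ of Theorem~\ref{thm:main3} are arranged (via AM--GM with $4\bb_0^2\leq \aa_0\cc_0$) so that $\Phi_k\geq \tfrac12\|f_k\|^2$, yielding
\[
\tfrac{1}{2}\|f_k(s_0)\|^2 \leq \Phi_k(s_0) \leq C_1\|f_k^{in}\|^2\frac{k^{1/2}}{\nu^{1/2}}.
\]
Applying Theorem~\ref{thm:main3} to the equation restarted at $s_0$ gives, for all $t\geq s_0$,
\[
\|f_k(t)\|^2 \leq 2C_1\|f_k^{in}\|^2\frac{k^{1/2}}{\nu^{1/2}} e^{-\eps_0 \nu^{1/2}k^{1/2}(t-s_0)}.
\]

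To absorb the prefactor $k^{1/2}/\nu^{1/2}$ into the exponent, I set $\lambda := \eps_0 \nu^{1/2}k^{1/2}/(1+|\ln\nu|+\ln k)$ and $t_* := (1+|\ln\nu|+\ln k)/(\eps_0\nu^{1/2}k^{1/2})$. A direct check yields $\lambda t_* = 1$ and, writing $k^{1/2}/\nu^{1/2}=\exp(\tfrac12(\ln k+|\ln\nu|))$, one sees at $t=t_*$ that the exponent satisfies $\tfrac12(\ln k+|\ln \nu|)-\eps_0\nu^{1/2}k^{1/2}(t_*-s_0)\leq -1+\eps_0$ (using $s_0\leq 1/k$); since the coefficient of $t$ is negative beyond $t_*$, the inequality $k^{1/2}\nu^{-1/2}e^{-\eps_0\nu^{1/2}k^{1/2}(t-s_0)}\leq e^{\eps_0}e^{-\lambda t}$ persists for all $t\geq t_*$. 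On the complementary range $t\in[0,t_*]$, monotonicity and $e^{\lambda t}\leq e^{\lambda t_*}=e$ give $\|f_k(t)\|^2\leq \|f_k^{in}\|^2 \leq e\|f_k^{in}\|^2 e^{-\lambda t}$. The corollary then follows with $c_0 := \max(2C_1 e^{\eps_0},e)$.

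The only real calculation is the bookkeeping in the $\Phi_k(s_0)$ bound and the choice $t_0=1/k$, which is exactly the one that balances the $\|\de_y f_k(s_0)\|^2$ contribution (which would blow up for $t_0\ll 1/k$) against the wasted exponential decay on $[0,t_*]$ (which would grow for $t_0\gg 1/k$). I do not foresee any serious obstacle beyond this tuning and elementary exponent arithmetic; the whole argument is structurally parallel to the standard way of relaxing regularity hypotheses on initial data for parabolic problems enjoying a hypocoercive energy estimate.
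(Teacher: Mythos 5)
Your proposal is correct and follows essentially the same route as the paper's own proof of Corollary \ref{cor:main3}: integrate the $L^2$ identity to produce, via the mean value theorem, a nearby time $s_0$ at which $\|\de_y f_k(s_0)\|^2$ is under control, bound $\Phi_k(s_0)$ by $O\!\left(\tfrac{k^{1/2}}{\nu^{1/2}}\|f_k^{in}\|^2\right)$ using band-localization and $\|u'\|_\infty<\infty$, invoke Theorem~\ref{thm:main3} restarted at $s_0$, and finally absorb the polynomial prefactor into the exponent at the logarithmic cost encoded in $T_{\nu,k}$, treating the early interval by monotonicity of $\|f_k(t)\|$. The only cosmetic difference is that you take the wait window to be $[0,1/k]$ rather than $[0,(\eps_0\nu^{1/2}k^{1/2})^{-1}]$ as in the paper; both lie inside $[0,T_{\nu,k}]$ under the standing hypothesis $\nu k^{-1}\leq\nu_0$, and both yield the same order $\Phi_k(s_0)\lesssim k^{1/2}\nu^{-1/2}\|f_k^{in}\|^2$, so the bookkeeping and final constant are equivalent up to renaming.
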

The above result is a semigroup estimate for the solution operator of \eqref{FP:f1four}, and it implies the result of Corollary \ref{cor:main2} for $k\neq 0$. The proof of this is postponed in Section \ref{sub:semigroup}.

\begin{remark}[The $k=0$ mode]
The $k=0$ mode (or the $x$-average of $f$ in real variables)  satisfies the equation
\begin{align}\label{eq:f0}
\de_t f_0= \nu(\de_{yy}f_0 -v\de_y f_0).
\end{align}
In view of the fact that
\begin{align}
\int_{\T^2} f(t,x,y)\rho_\infty(y)=0, \qquad \forall t\geq0, 
\end{align}
we have that 
\begin{align}\label{eq:meanzerof0}
\int_{\T} f_0(t,y)\rho_\infty(y)=0, \qquad \forall t\geq0.
\end{align}
Note that the above is only true for $f_0$, and it cannot in general be imposed on any other Fourier modes.
A simple energy estimate performed on \eqref{eq:f0} implies that
\begin{align}
\frac12\ddt \| f_0\|^2+\nu\|\de_yf_0\|^2=0,
\end{align}
so, in view of \eqref{eq:meanzerof0}, we are in the position of applying the Poincar\'e inequality and obtain
\begin{align}
\| f_0(t)\|\leq \| f^{in}_0\| \e^{-\eps_0\nu t}, \qquad \forall t\geq 0,
\end{align}
for some $\eps_0>0$. In turn,
\begin{align}
\| h_0(t)\|\leq \| h^{in}_0\| \e^{-\eps_0 t}, \qquad \forall t\geq 0,
\end{align}
as stated in  Corollary \ref{cor:main2}. In other words, the $x$-average of $h$ does not see the effect of the drift $u$.
\end{remark}

\subsection{Some energy estimates}
In this section, we perform energy estimates on \eqref{FP:f1four}.  In what follows,
we will tacitly make use of the antisymmetry properties
\begin{align}
\l u\de_x g, g\r=-\l g, u\de_x g\r, \qquad \l u\de_x g,g\r=0.
\end{align}
Also, we define the operator
\begin{align}
L= \de_{yy} -v\de_y 
\end{align}
which satisfies the symmetry properties
\begin{align}
\l L g, g\r=-\l \de_y g, \de_y g\r, \qquad \l L g, g\r=-\| \de_y g\|^2.
\end{align}
Testing \eqref{FP:f1} with $f$ in $L^2_{\rho_\infty}$ we have 
\begin{align}\label{eq:L2}
 \frac12\ddt \|f\|^2  =-\nu\| \de_y f\|^2.
\end{align}
Analogously, taking $\de_y$ of \eqref{FP:f1} and testing with $\de_y f$ we obtain
\begin{align}\label{eq:alpha}
\frac12\ddt \|\de_y f\|^2&=\nu\l\de_y Lf, \de_y f  \r -\l \de_y (u\de_x f),\de_y f \r= -\nu\|L f\|^2 -\l u'\de_x f,\de_y f \r.
\end{align}
We now turn to the cross term $\l u'\de_x f,\de_y f \r$. Using  \eqref{FP:f1}, we have
\begin{align}
\ddt\l u'\de_x f,\de_y f \r=\nu\left[\l u'\de_x Lf,\de_y  f \r+\l u'\de_x f,\de_y Lf \r\right]-\left[\l u'\de_x (u\de_x f),\de_y f \r+\l u'\de_x f,\de_y (u\de_x f)\r\right].
\end{align}
Now, recalling that $\de_y \rho_\infty=v \rho_\infty$, we have
\begin{align}
\l u'\de_x f,\de_y Lf \r=\int_{\T^2}u'\de_x f \de_y Lf \rho_\infty \dd x \dd y=-\l u'\de_{xy} f,Lf \r-\l u''\de_x f, Lf \r-\l v u'\de_x f, Lf \r.
\end{align}
and therefore
\begin{align}
\l u'\de_x Lf,\de_y  f \r+\l u'\de_x f,\de_y Lf \r=-2\l u'\de_{xy} f,Lf \r-\l u''\de_x f, Lf \r-\l v u'\de_x f, Lf \r.
\end{align}
On the other hand,
\begin{align}
\l u'\de_x (u\de_x f),\de_y f \r+\l u'\de_x f,\de_y (u\de_x f)\r=\| u'\de_x f\|^2,
\end{align}
 and therefore
\begin{align}\label{eq:beta}
\ddt\l u'\de_x f,\de_y f \r=-\nu\left[2\l u'\de_{xy} f,Lf \r+\l u''\de_x f, Lf \r+\l v u'\de_x f, Lf \r\right]-\| u'\de_x f\|^2.
\end{align}
We are left with one more energy estimate, namely
\begin{align}
\frac12\ddt\| u'\de_x f\|^2&= \nu \l u'\de_x f,u'\de_x Lf \r -\l u'\de_x f,u'\de_x (u\de_x f) \r =- \nu \l (u')^2\de_{xx} f, Lf \r \notag\\
&= \nu \l \de_y[(u')^2\de_{xx} f], \de_yf \r= \nu \l (u')^2\de_{xxy} f, \de_yf \r +2\nu \l u' u'' \de_{xx} f, \de_yf \r \notag\\
&=- \nu \| u'\de_{xy} f\|^2 +2\nu \l u' u'' \de_{xx} f, \de_yf \r,
\end{align}
which we rewrite for further reference as
\begin{align}\label{eq:gamma}
\frac12\ddt\| u'\de_x f\|^2=- \nu \| u'\de_{xy} f\|^2 +2\nu \l  u'' \de_{x} f, u'\de_{xy}f \r.
\end{align}
We now combine the above estimates in a precise way in order to derive a good differential inequality.
%
%
\subsection{The hypocoercivity scheme}
For $\aa,\bb,\cc>0$ to be determined, define the functional
\begin{align}
\Phi=\frac12\left[\|f\|^2 + \aa\|\de_y f\|^2+2\bb \l u'\de_x f, \de_y f \r+\cc \| u'\de_x f\|^2 \right].
\end{align}
Notice that, up to rescaling of the various coefficients, $\Phi$ has exactly the form \eqref{eq:PHIkk}, as long as we assume that 
$f$ is concentrated in one single frequency $k$.
Since
\begin{align}
2\bb |\l u'\de_x f, \de_y f \r|\leq \frac{\aa}{2}\|\de_y f\|^2+ \frac{2\bb^2}{\aa} \| u'\de_x f\|^2,
\end{align}
if we assume that
\begin{align}\label{eq:constraint}
\frac{\bb^2}{\aa\cc}\leq \frac{1}{4},
\end{align}
we have that
\begin{align}\label{eq:Phipos}
\frac12\left[\|f\|^2 + \frac{\aa}{2}\|\de_y f\|^2+\frac{\cc}{2} \| u'\de_x f\|^2 \right]\leq \Phi\leq\frac12\left[\|f\|^2 + \frac{3\aa}{2}\|\de_y f\|^2+\frac{3\cc}{2} \| u'\de_x f\|^2 \right] .
\end{align}
Collecting \eqref{eq:L2},  \eqref{eq:alpha},  \eqref{eq:beta}, and  \eqref{eq:gamma}, we find that
\begin{align}
&\ddt \Phi+\nu\| \de_y f\|^2+\aa \nu\|L f\|^2 +\bb \| u'\de_x f\|^2+\cc\nu \| u'\de_{xy} f\|^2\notag\\
&\qquad=-\aa\l u'\de_x f,\de_y f \r
-\bb\nu\left[2\l u'\de_{xy} f,Lf \r+\l u''\de_x f, Lf \r+\l v u'\de_x f, Lf \r\right]+2\cc\nu \l  u'' \de_{x} f, u'\de_{xy}f \r.
\end{align}
We now estimates the error terms one by one, possibly adding further constraints on $\aa,\bb,\cc$, and then verify that a suitable choice is possible. In what follows, $C_0\geq 2$ is a constant that depends on $u,v$ and that can change from line to line, but it is crucially independent of $\aa,\bb,\cc,\nu$ and the $x$-Fourier mode $k$. 
For the term containing $\aa$, we have
\begin{align}
\aa|\l u'\de_x f,\de_y f \r|\leq \frac{\bb}{2}\|u'\de_x f\|^2+ C_0 \frac{\aa^2}{\bb}\|\de_y f\|^2.
\end{align}
while the first $\bb$-error terms can be estimated as
\begin{align}
2\bb\nu|\l u'\de_{xy} f,Lf \r| \leq \frac{\cc\nu}{4}\|u'\de_{xy} f\|^2 +C_0 \frac{\bb^2\nu}{\cc}\|L f\|^2.
\end{align}
Using the boundedness of $u''$, we also have
\begin{align}
\bb\nu|\l u''\de_x f, Lf \r|\leq \cc\nu \|\de_x f\|^2 +C_0 \frac{\bb^2\nu}{\cc}\|L f\|^2,
\end{align}
while using the boundedness of $v$ we can derive the bound
\begin{align}
\bb\nu|\l v u'\de_x f, Lf \r|\leq \cc\nu \|\de_x f\|^2 +C_0 \frac{\bb^2\nu}{\cc}\|L f\|^2.
\end{align}
Finally, the $\cc$-error term is estimated as
\begin{align}
2\cc\nu| \l  u'' \de_{x} f, u'\de_{xy}f \r|\leq \frac{\cc\nu}{4}\|u'\de_{xy}f\|^2+ C_0\cc\nu \|\de_x f\|^2.
\end{align}
Collecting all the bounds above and assuming the  more stringent constraints
\begin{align}\label{eq:constraint2}
\frac{\bb^2}{\aa\cc}\leq \frac{1}{6C_0},
\end{align}
and
\begin{align}\label{eq:constraint3}
 \frac{\aa^2}{\bb}\leq \frac{\nu}{2C_0},
\end{align}
we arrive at
\begin{align}\label{eq:Diff1}
&\ddt \Phi+\frac{\nu}{2}\| \de_y f\|^2+\frac{\aa \nu}{2}\|L f\|^2 +\frac{\bb}{2} \| u'\de_x f\|^2+\frac{\cc\nu}{2} \| u'\de_{xy} f\|^2\leq
C_0\cc\nu \|\de_x f\|^2.
\end{align}
To estimate the right-hand side above, it is convenient to think of $\de_x=i k$ and rescale the parameters in the following way:
\begin{align}\label{eq:coeffchoice}
\aa=\frac{\nu^{1/2}}{k^{1/2}}\aa_0, \qquad \bb=\frac{1}{k}\bb_0, \qquad \cc=\frac{1}{\nu^{1/2}k^{3/2}}\cc_0,
\end{align}
with $\aa_0,\bb_0,\cc_0$ independent of $\nu,k$ and such that
\begin{align}\label{eq:constraint4}
\frac{\bb_0^2}{\aa_0\cc_0}\leq \frac{1}{6C_0} 
\end{align}
and
\begin{align}\label{eq:constraint5}
 \frac{\aa_0^2}{\bb_0}\leq \frac{1}{2C_0},
\end{align}
so that  the constraints \eqref{eq:constraint}, \eqref{eq:constraint2} and \eqref{eq:constraint3} are automatically satisfied, provided we
can choose $\aa_0,\bb_0,\cc_0$ as above. Re-writing \eqref{eq:Diff1}, we end up with
\begin{align}
&\ddt \Phi+\frac{\nu}{2}\| \de_y f\|^2 +\frac{\bb_0}{2} \frac{1}{k} \| u'\de_x f\|^2+\frac{\aa}{2} \|L f\|^2+\frac{\cc}{2}\| u'\de_{xy} f\|^2\leq
C_0\cc_0\nu^{1/2}k^{1/2} \|f\|^2.
\end{align}
We now make use of the following inequality, derived in \cite{BCZ15}*{Proposition 2.7}, 
\begin{align}
\sigma\|g\|^2\leq C_0\left[\sigma^2 \|\de_yg\|^2+ \| u'g\|^2\right],
\end{align}
valid for any function $g:\T\to \C$ in $H^1$ and for any sufficiently small $\sigma>0$. In our case, we will apply it with
$g=f$, identifying $f$ with its $k$-th Fourier mode, and 
\begin{align}
\sigma^2=\frac{1}{\bb_0}\frac{\nu}{k}\leq\frac{\nu_0}{\bb_0} \ll 1,
\end{align}
provided we choose $\nu_0\ll \bb_0$.
In this way
\begin{align}\label{eq:spect}
\nu^{1/2}k^{1/2}  \|f\|^2\leq \frac{C_0}{\bb_0^{1/2}}\left[\nu\|\de_yf\|^2+ k\| u'f\|^2\right]=  \frac{C_0}{\bb_0^{1/2}}\left[\nu\|\de_yf\|^2+ \frac{\bb_0}{k}\| u'\de_x f\|^2\right].
\end{align}
Thus, from \eqref{eq:Diff3}, the above inequality and the further constraint 
\begin{align}\label{eq:constraint6}
\frac{\cc_0}{\bb_0^{1/2}}\leq \frac{1}{2C_0},
\end{align}
we learn that
\begin{align}\label{eq:Diff3}
&\ddt \Phi+\frac{\nu}{4}\| \de_y f\|^2+\frac{\bb_0}{4} \frac{1}{k} \| u'\de_x f\|^2+\frac{\aa}{2} \|L f\|^2+\frac{\cc}{2}\| u'\de_{xy} f\|^2 \leq 0.
\end{align}
We use \eqref{eq:spect} once more to deduce that
\begin{align}
&\ddt \Phi+\frac{\bb_0^{1/2}}{C_0} \nu^{1/2}k^{1/2} \|f\|^2+ \frac{\nu}{4}\| \de_y f\|^2 +\frac{\bb_0}{4} \frac{1}{k} \| u'\de_x f\|^2+\frac{\aa}{2} \|L f\|^2+\frac{\cc}{2}\| u'\de_{xy} f\|^2\leq 0.
\end{align}
Equivalently, we can make the decay rate explicit by writing
\begin{equation}\label{eq:Diff4}
\ddt \Phi+\nu^{1/2}k^{1/2}\left[\frac{\bb_0^{1/2}}{C_0}  \|f\|^2+ \frac{\aa}{4\aa_0}\| \de_y f\|^2 +\frac{\bb_0}{4\cc_0} \cc \| u'\de_x f\|^2\right]+\frac{\aa}{2} \|L f\|^2+\frac{\cc}{2}\| u'\de_{xy} f\|^2 \leq 0.
\end{equation}
We now choose $\aa_0,\bb_0,\cc_0$ complying with \eqref{eq:constraint4}, \eqref{eq:constraint5} and \eqref{eq:constraint6}.
Let 
\begin{align}
\delta_0=\left[\frac{1}{288 C_0^3}\right]^{1/4},
\end{align}
and set
\begin{align}
\aa_0=12 C_0\delta_0^3, \qquad \bb_0=\delta_0^2, \qquad  \cc_0=\frac{\delta_0}{2 C_0}.
\end{align}
In this way, 
\begin{align}
\frac{\bb_0^2}{\aa_0\cc_0}= \frac{1}{6C_0}, \qquad \frac{\cc_0}{\bb_0^{1/2}}= \frac{1}{2C_0}, \qquad \frac{\aa_0^2}{\bb_0}=\frac{1}{2C_0},
\end{align}
so that  \eqref{eq:constraint4}, \eqref{eq:constraint5} and \eqref{eq:constraint6} are automatically satisfied, and since
\begin{align}
\frac{\bb_0^{1/2}}{C_0}= \frac{\delta_0}{C_0}, \qquad \frac{1}{4\aa_0}=\frac{1}{12C_0\delta_0^3},\qquad \frac{\bb_0}{4\cc_0}=\frac{C_0}{2}\delta_0,
\end{align}
we deduce from \eqref{eq:Diff4} that
\begin{align}
&\ddt \Phi+\frac{\delta_0}{C_0}\nu^{1/2}k^{1/2}\left[  \|f\|^2+ \frac{\aa}{12\delta_0^4}\| \de_y f\|^2 +\frac{C_0^2}{2} \cc \| u'\de_x f\|^2\right]+\frac{\aa}{2} \|L f\|^2+\frac{\cc}{2}\| u'\de_{xy} f\|^2 \leq 0.
\end{align}
Consequently, since $C^2_0\geq 3$ and $\delta_0^4\leq 1/18$, we arrive at
\begin{align}
&\ddt \Phi+\frac{\delta_0}{C_0}\nu^{1/2}k^{1/2}\left[  \|f\|^2+ \frac{3\aa}{2}\| \de_y f\|^2 +\frac{3\cc}{2}  \| u'\de_x f\|^2\right]+\frac{\aa}{2} \|L f\|^2+\frac{\cc}{2}\| u'\de_{xy} f\|^2 \leq 0.
\end{align}
Defining
\begin{align}
\eps_0=\frac{2\delta_0}{C_0},
\end{align}
and using \eqref{eq:Phipos}, we finally obtain the differential inequality
\begin{align}
\ddt \Phi+\eps_0\nu^{1/2}k^{1/2}\Phi+\frac{\aa}{2} \|L f\|^2+\frac{\cc}{2}\| u'\de_{xy} f\|^2 \leq 0.
\end{align}
This concludes the proof of Theorem \ref{thm:main3}.
%
%
\subsection{Estimates in $L^2_{\rho_\infty}$}\label{sub:semigroup}
We prove here Corollary \ref{cor:main3}. 
We first prove that 
\begin{align}\label{eq:L2kLARGE}
\|f(t)\|^2 \leq  C_0\| f^{in}\|^2 \e^{-\eps_0\frac{\nu^{1/2}k^{1/2}}{1+|\ln\nu|+\ln k}t} , \qquad \forall t\geq T_{\nu,k}:=\frac{1+|\ln\nu|+\ln k }{\eps_0\nu^{1/2}k^{1/2}},
\end{align}
where again we neglect the dependence on $k$ of $f$ and $C_0\geq 1$ is some constant.
From \eqref{eq:L2} and the mean value theorem, it is easy to see that there exists 
\begin{align}
t_\star\in\left(0,\frac{1}{\eps_0\nu^{1/2}k^{1/2}}\right)
\end{align}
such that
\begin{align}\label{eq:parabreg}
\frac{\nu^{1/2}}{k^{1/2}}\|\de_yf (t_\star)\|^2\leq \frac{\eps_0}{2} \|f^{in}\|^2.
\end{align}
Moreover, from \eqref{eq:diffPHI2}, we deduce that
\begin{align}\label{eq:fin1}
\Phi(t) \leq  \e^{-\eps_0\nu^{1/2}k^{1/2}t}\Phi(t_\star), \qquad \forall t\geq t_\star.
\end{align}
Now, using \eqref{eq:Phipos} and  \eqref{eq:coeffchoice}, we see that
\begin{align}\label{eq:Phiposk}
\frac12\|f\|^2 \leq \Phi\leq C_0\left[\|f\|^2 +  \frac{\nu^{1/2}}{k^{1/2}}\|\de_y f\|^2+\frac{k^{1/2}}{\nu^{1/2}} \|f\|^2 \right] .
\end{align}
Therefore,  using \eqref{eq:parabreg} and \eqref{eq:L2} we arrive at
\begin{align}
\Phi(t_\star)\leq C_0\left[\|f(t_\star)\|^2+\|f^{in}\|^2 +\frac{k^{1/2}}{\nu^{1/2}} \|f(t_\star)\|^2 \right]\leq C_0\left[\|f^{in}\|^2 +\frac{k^{1/2}}{\nu^{1/2}} \|f^{in}\|^2 \right].
\end{align}
By noticing that since $\nu_0\ll 1$ there holds
\begin{align}
\frac{k^{1/2}}{\nu^{1/2}}\e^{-\eps_0\nu^{1/2}k^{1/2}t}\leq \e^{-\eps_0\frac{\nu^{1/2}k^{1/2}}{1+|\ln\nu|+\ln k}t},\qquad \forall t\geq T_{\nu,k},
\end{align}
we then find from \eqref{eq:fin1} that for all $t\geq t_\star$ there holds
\begin{align}
\Phi(t)\leq \Phi(t_\star) \e^{-\eps_0\nu^{1/2}k^{1/2}(t-t_\star)}
&\leq C_0\left[\|f^{in}\|^2 +\frac{k^{1/2}}{\nu^{1/2}} \|f^{in}\|^2 \right] \e^{-\eps_0\nu^{1/2}k^{1/2}(t-t_\star)}\notag\\
&= C_0\e^{\eps_0\nu^{1/2}k^{1/2}t_\star} \left[\|f^{in}\|^2 +\frac{k^{1/2}}{\nu^{1/2}} \|f^{in}\|^2 \right] \e^{-\eps_0\nu^{1/2}k^{1/2}t}\notag\\
&\leq C_0 \left[\|f^{in}\|^2 +\frac{k^{1/2}}{\nu^{1/2}} \|f^{in}\|^2 \right] \e^{-\eps_0\nu^{1/2}k^{1/2}t}\notag\\
&\leq C_0\| f^{in}\|^2 \e^{-\eps_0\frac{\nu^{1/2}k^{1/2}}{1+|\ln\nu|+\ln k}t},
\end{align}
which in particular implies \eqref{eq:L2kLARGE} upon using \eqref{eq:Phiposk} once more. Now, to extend \eqref{eq:Phiposk} to all $t\in[0,T_{\nu,k})$,
we simply notice that  $\| f(t)\|$ is decreasing and
\begin{align}
\min_{t\in[0,T_{\nu,k}]}C_0 \e^{-\eps_0\frac{\nu^{1/2}k^{1/2}}{1+|\ln\nu|+\ln k}t}=C_0 \e^{-\eps_0\frac{\nu^{1/2}k^{1/2}}{1+|\ln\nu|+\ln k}T_{\nu,k}}= C_0\e^{-1}\leq C_0,
\end{align}
concluding the proof of Corollary  \ref{cor:main3}.
%
%
\section{Numerical simulations}\label{sec:numerics}

In this section we illustrate some of the theoretical results obtained in the previous sections by means of some simple numerical experiments. In particular, we confirm the scalings for the diffusion coefficient, as a function of the strength of the gradient perturbation, for a simple two-dimensional shear flow and we also explore this scaling in the presence of closed streamlines, i.e. cat's eye flows and cellular flows.

We consider the two-dimensional Langevin dynamics, consistent with the rescaled SDEs~\eqref{eq:SDEsystem0},
\begin{align}\label{e:lang_numer}
\dd \XX^{\eps,\nu}(t) = \left[ \frac{1}{\nu} \nabla^{\perp} \psi_\eps(\XX^{\eps,\nu}(t)) - \nabla \psi_\eps(\XX^{\eps,\nu}(t)) \right]\dd t + \sqrt{2\kappa} \, \dd \WW(t),
\end{align}
where $\nabla^{\perp} = \left(-\de_y,\de_x \right)$ and for  the one-parameter family of stream functions  
\begin{equation}\label{e:stream_function}
\psi_{\eps}({\bx})= \eps \sin (3x)+ \sin (3y) , \quad \mbox{with} \;\; \eps \in[0,1].
\end{equation}
This family of stream functions can be mapped to the corresponding Childress-Soward flow~\cite{kramer}, given by 
$\tilde{\psi}_{\eps}(x,y) = \sin(3x) \sin(3y) + \eps \cos(3x) \cos(3y)$, via an appropriate rotation/change of coordinates. 
Typical streamlines for different values of $\eps$ are shown in Figure~\ref{fig:stream}.
\begin{figure}[h!]
  \centering
  \begin{subfigure}[b]{0.32\linewidth}
    \includegraphics[width=\linewidth]{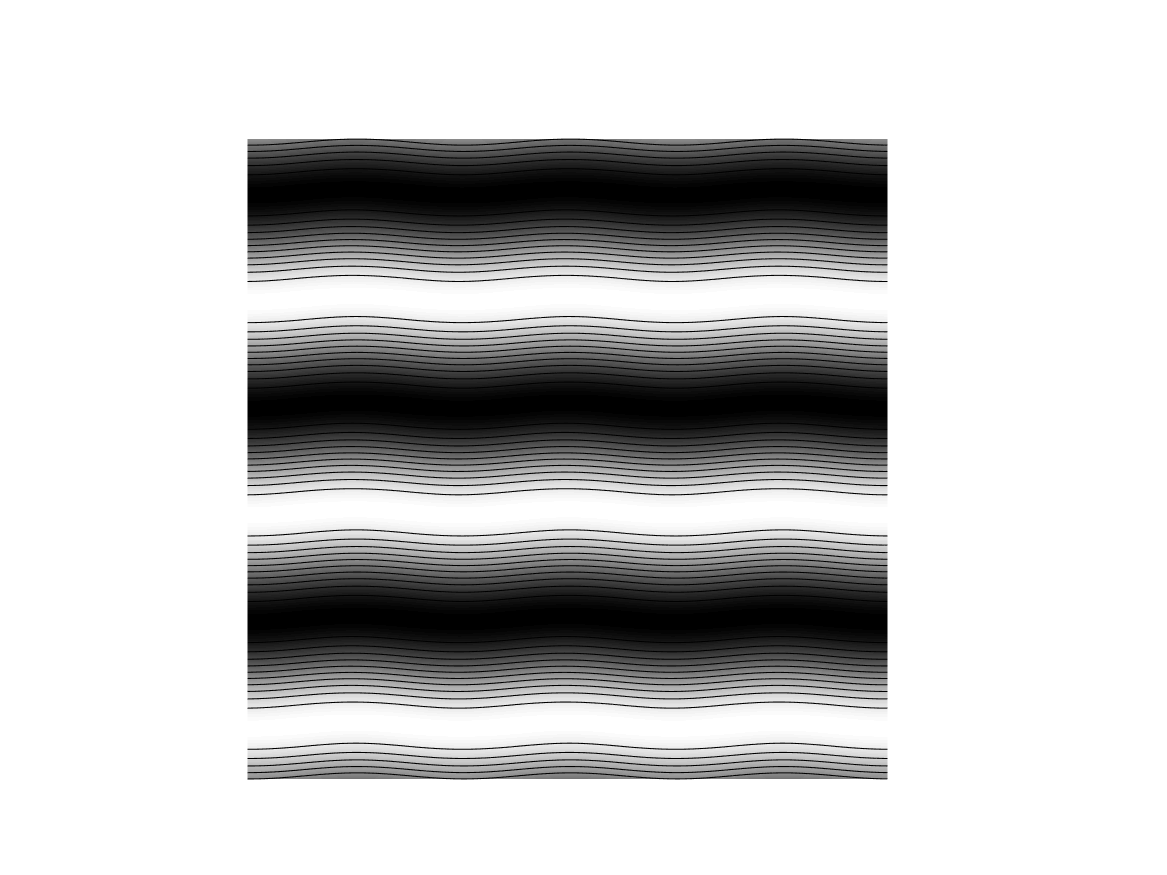}
     \caption{$\eps=0$}
  \end{subfigure}
  \begin{subfigure}[b]{0.32\linewidth}
    \includegraphics[width=\linewidth]{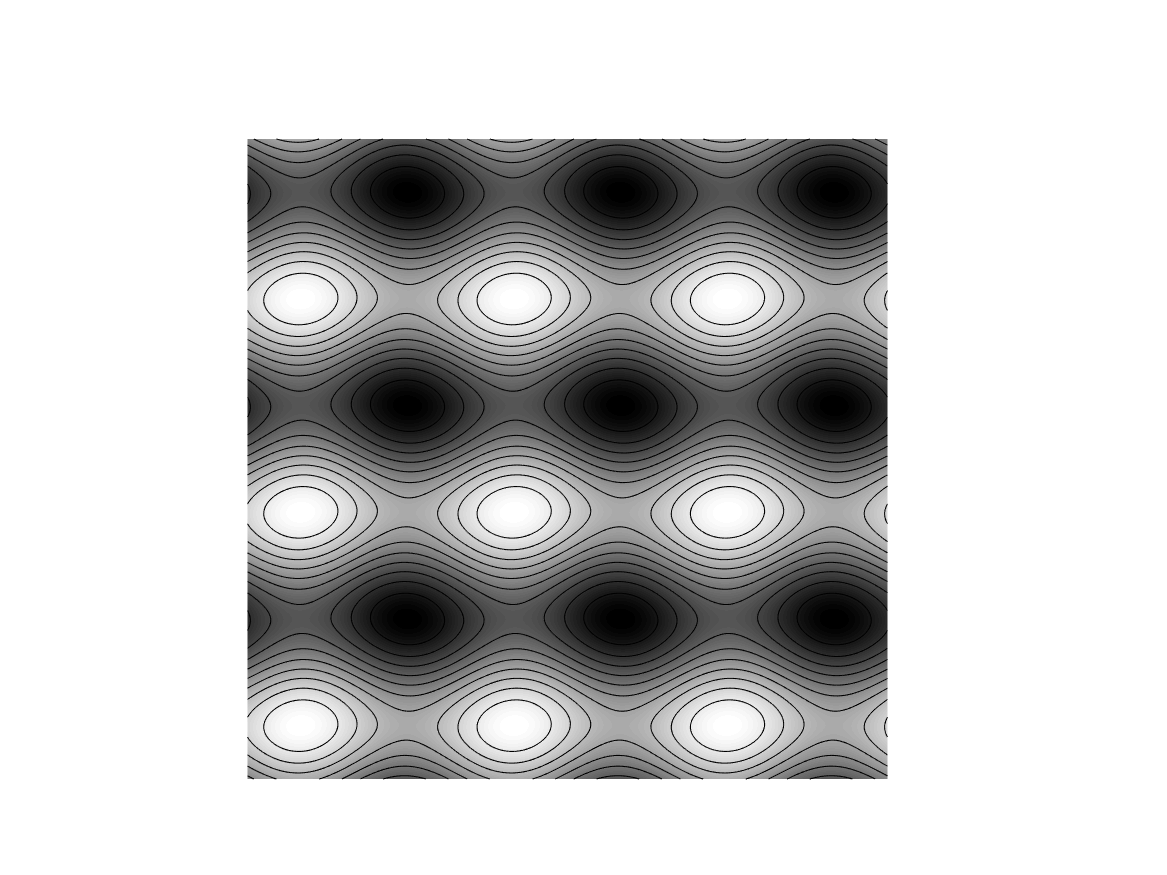}
     \caption{$\eps=0.5$}
  \end{subfigure}\label{fig:stream05}
  \begin{subfigure}[b]{0.32\linewidth}
    \includegraphics[width=\linewidth]{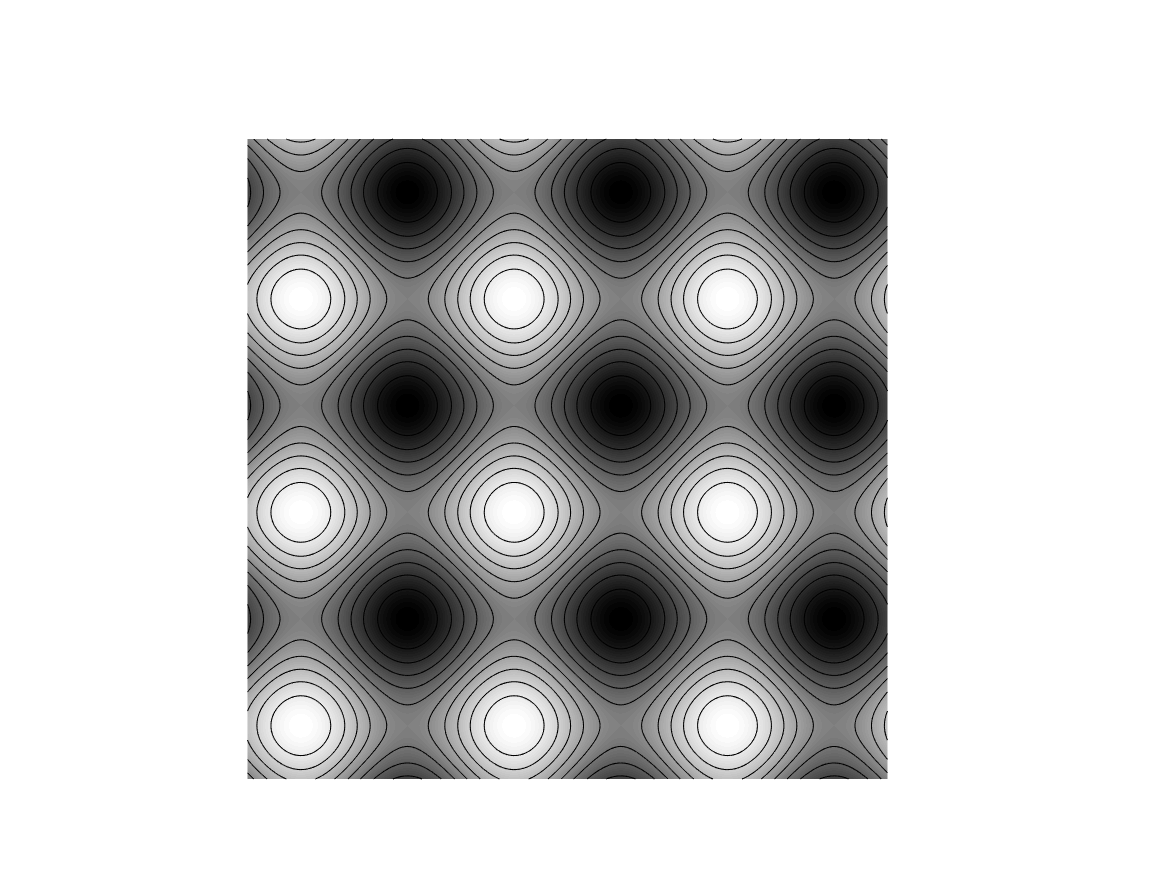}
     \caption{$\eps=1$}
  \end{subfigure}
\caption{Streamlines of the stream function $\psi(x,y) = \eps\sin(3 x) + \sin(3 y)$, for $\eps=0,0.5,1$.}
  \label{fig:stream}
\end{figure}

\subsection{Effective diffusivity}
We calculate the effective diffusion tensor using Monte Carlo simulations \cites{McL98, PavlStBan06} and using the Langrangian definition of the effective diffusivity:
\begin{equation}
D_{eff} = \lim_{t \rightarrow +\infty} \frac{(\XX^{\eps,\nu}(t)-\E \XX^{\eps,\nu}(t)) \otimes (\XX^{\eps,\nu}(t)-\E \XX^{\eps,\nu}(t)) }{2 t}.
\end{equation}
Equations~\eqref{e:lang_numer}-\eqref{e:stream_function}, read, with $\XX^{\eps,\nu} = (X(t), \, Y(t))$ and $\WW(t) = (W_1(t), \, W_2(t))$:
\begin{align}\label{eq:XXXXX}
\dd X(t) &= -\frac{3}{\nu} \cos(3 Y(t)) \, \dd t - 3 \eps \cos(3 X(t)) \, \dd t + \sqrt{2 \kappa} \, \dd W_1(t),  
\\ 
\dd Y(t) &= \frac{3}{\nu} \eps \cos(3 X(t)) \, \dd t - 3  \cos(3 Y(t)) \, \dd t + \sqrt{2 \kappa} \, \dd W_2(t).
\end{align}
We solve the SDEs~\eqref{e:lang_numer} using the Euler-Maruyama scheme. An alternative is 
the numerical method developed in~\cite{PavlStZyg09} that is particularly tailored to the calculation of
the eddy diffusivity for periodic vector fields, and that is computationally more efficient in
the small $\kappa$ regime. Given that our primary focus is on the study of the effect of the
compressible perturbation on the eddy diffusivity, in our numerical experiments we set $\kappa =1$. We also consider the regime $\nu \in (0,1)$, since for values of $\nu$ of $O(1)$ and larger, dynamic is dominated by molecular diffusion.
 
For $\eps=0$, the effective diffusion coefficient in the $y$ direction is independent of $\nu$ and is given by the standard Lifson-Jackson formula for the diffusion coefficient of a Brownian particle moving in a one dimensional periodic potential~\cite{pavliotis2014book}*{Eqn. (13.6.13)}. In Figure~\ref{fig:bm-pathsA} we plot the effective diffusion coefficient along the $x$ direction. Using Theorem \ref{thm:main1} and an appropriate rescaling, it is easy to check that the effective diffusion coefficient in the $x$ direction, and in the absence of noise in the $x$ direction, is $D_{xx}= D_0\nu^{-2}$, where $D_0$ is given by the Lifson-Jackson formula for the potential $\psi_{0}=\sin(3 y)$.  Thus it is in agreement with the slope in this figure. 

At $\eps=0.5$, we have both open and closed streamlines, and the diffusion coefficient scales differently in the $x$ and $y$ directions, as a function of $\nu$. In fact, the diffusion coefficient in the $y$ direction, $D_{yy}$, depends weakly on $\eps$ and we do not present the plot. The diffusion coefficient $D_{xx}$  for $\eps=0.5$ is presented in Figure~\ref{fig:bm-pathsB}. Finally, we plot the diffusion coefficient for $\eps=1$, the case of closed streamlines. Due to symmetries, the diffusion coefficient is the same in the $x$ and $y$ directions, and we only plot $D_{xx}$ in Figure~\ref{fig:bm-pathsD}. Based on our numerical experiments for different values of $\eps$, not presented in this section, and elementary least squares fitting, we conjecture that the diffusion in the $x$ direction scales with $\nu$, for $\nu \in (0,1)$ as $D_{xx} \sim \nu^{-2 + \eps}$, at least for $\eps \in [0,1)$, but not necessarily at $\eps=1$. We will return to this conjecture in future work.

\begin{figure}[h!]
  \centering
  \begin{subfigure}[b]{0.32\linewidth}
    \includegraphics[width=\linewidth]{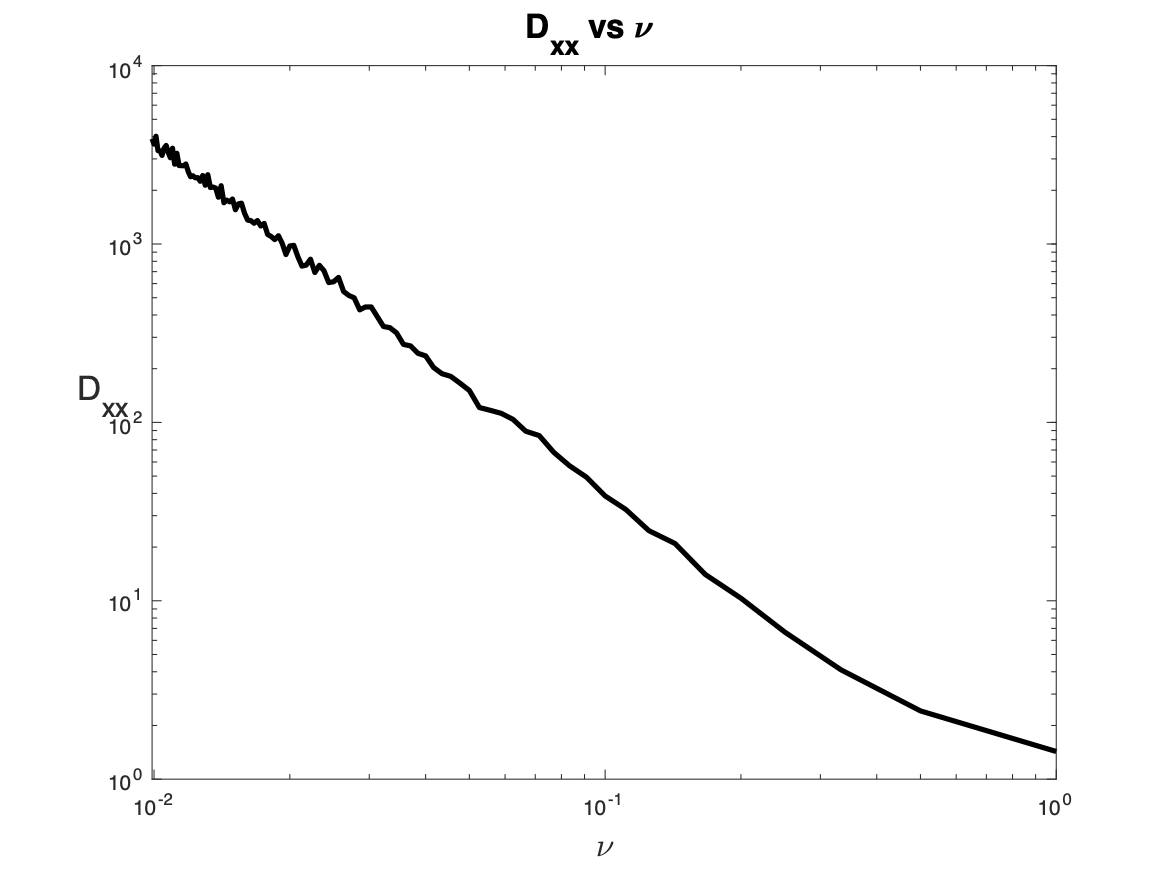}
    \caption{$\eps=0$}
    \label{fig:bm-pathsA}
  \end{subfigure}  
  \begin{subfigure}[b]{0.32\linewidth}
    \includegraphics[width=\linewidth]{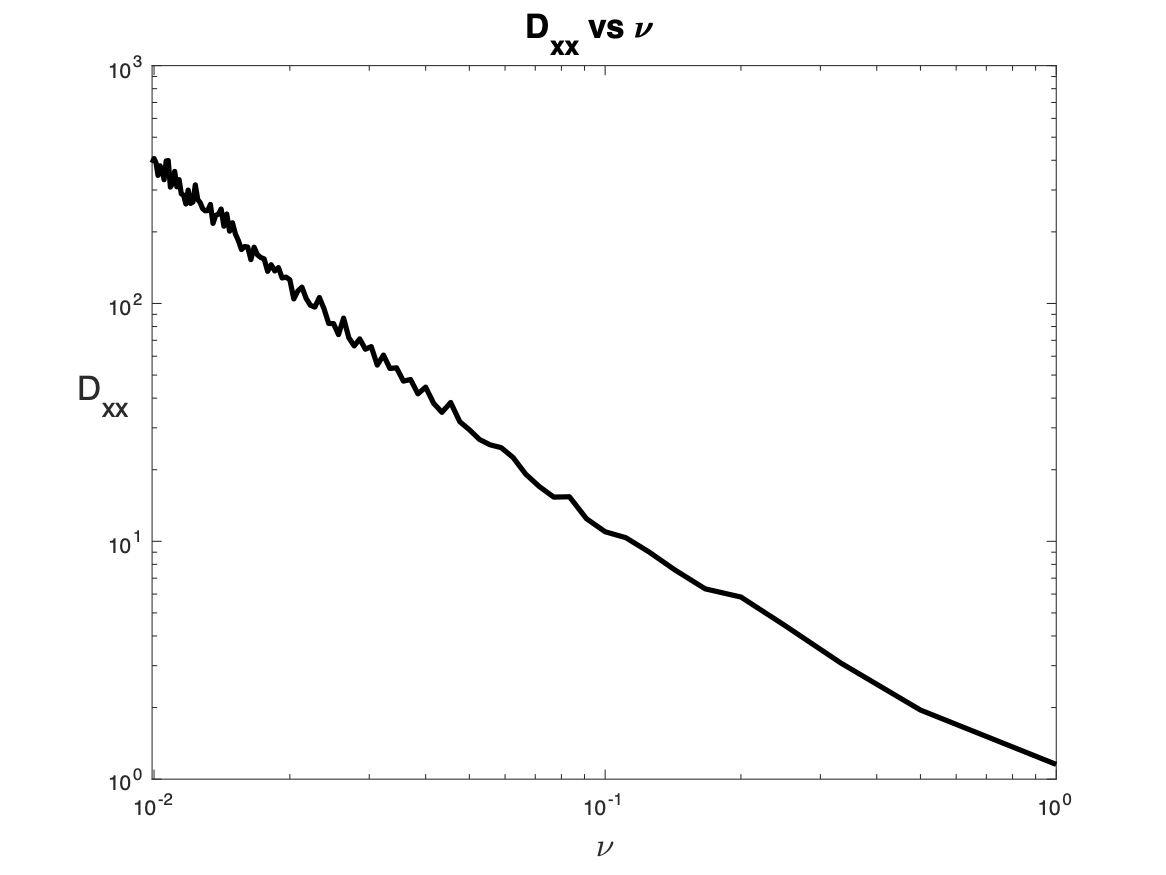}
    \caption{$\eps=0.5$}
        \label{fig:bm-pathsB}
  \end{subfigure}
  \begin{subfigure}[b]{0.32\linewidth}
    \includegraphics[width=\linewidth]{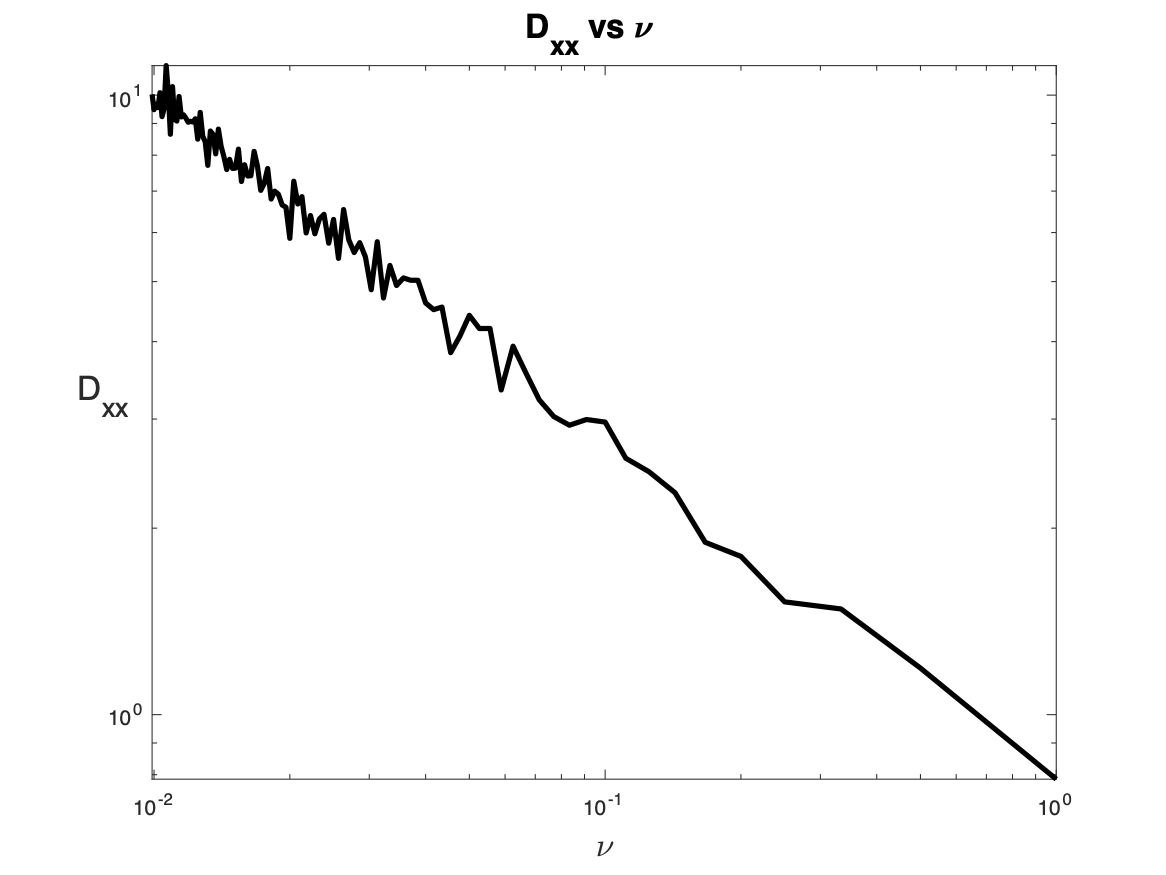}
    \caption{$\eps = 1$}
        \label{fig:bm-pathsD}
  \end{subfigure}
  \caption{The eddy diffusivity in the $x$ direction for $\eps=0, 0.5,1$.}
\end{figure}

\subsection{Enhanced diffusion}
Turning to the enhanced diffusion problem, when $\eps=1/2$, the stream function in \eqref{e:stream_function} has both open and closed level sets (see Figure \ref{fig:stream}). From the 
 mixing and relaxation enhancing perspective, the situation is depicted in Figure \ref{fig:cell05}. As in \eqref{FP:h}, the function $h$, solution to
 \begin{align}\label{FP:heps}
\de_t h+\frac{1}{\nu}  \nabla^{\perp} \psi_\eps \cdot\nabla h=\Delta h-\nabla  \psi_\eps\cdot\nabla h,\qquad h(0,\xx)=h^{in}(\xx),
\end{align}
 concentrates, on very short time-scales, on the streamlines of $\psi_\eps$.  Hence mixing \emph{along} streamlines is most efficient on a time scale
 shorter than the natural diffusive one (of order 1 in this case). For longer times instead diffusion takes over and dissipation happens mainly
\emph{across streamlines}. Although this is similar to what happen in the case $\eps=0$ (see Figure \ref{fig:shear}), the analysis here is much more complicated due to nontrivial  symmetries of the flow.

\begin{figure}[h!]
  \centering
  \begin{subfigure}[b]{0.24\linewidth}
    \includegraphics[width=\linewidth]{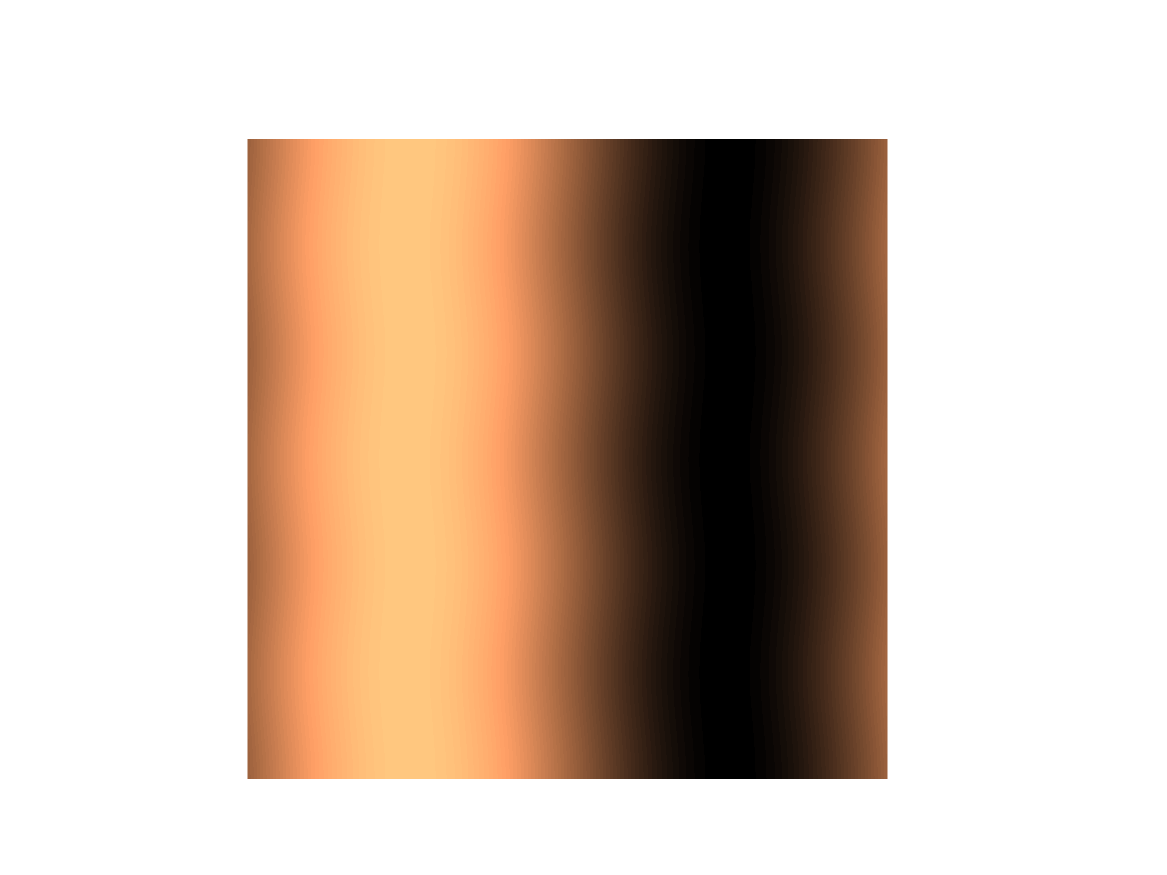}
  \end{subfigure}
  \begin{subfigure}[b]{0.24\linewidth}
    \includegraphics[width=\linewidth]{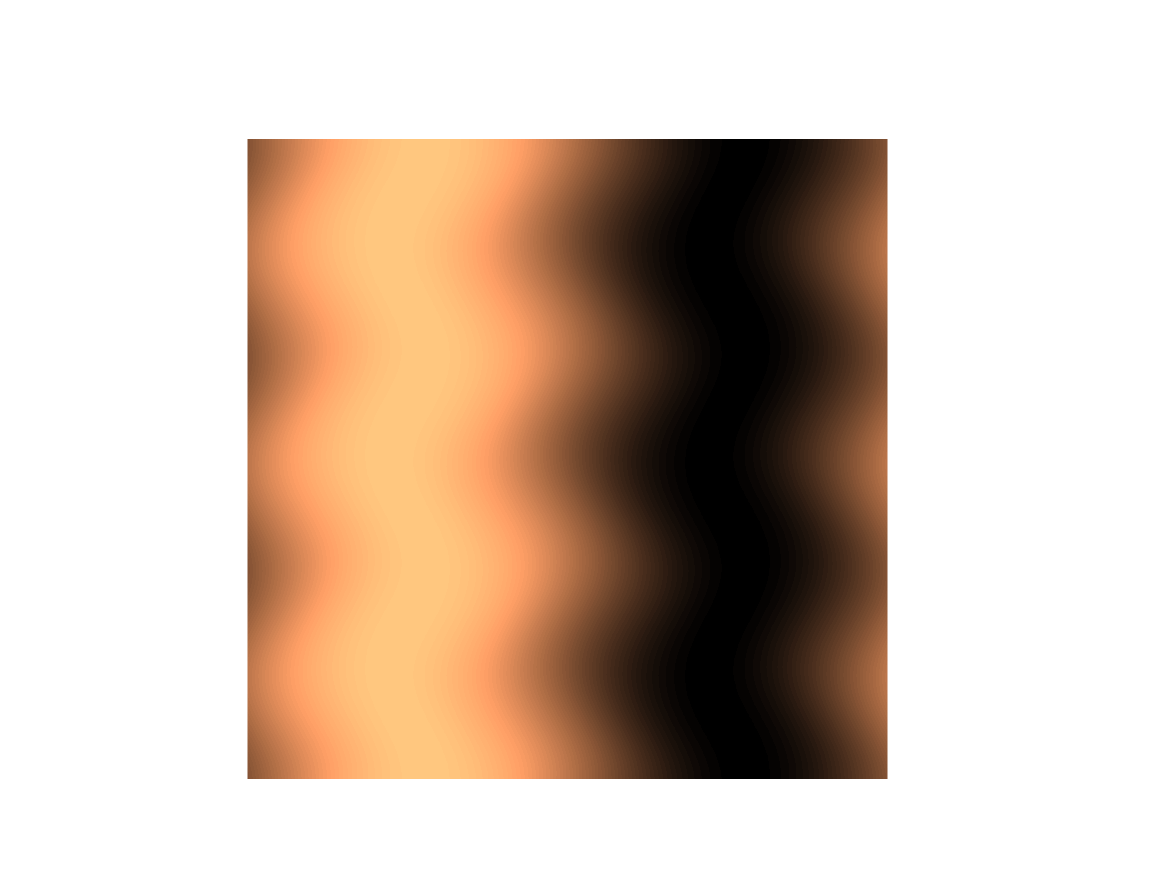}
  \end{subfigure}
  \begin{subfigure}[b]{0.24\linewidth}
    \includegraphics[width=\linewidth]{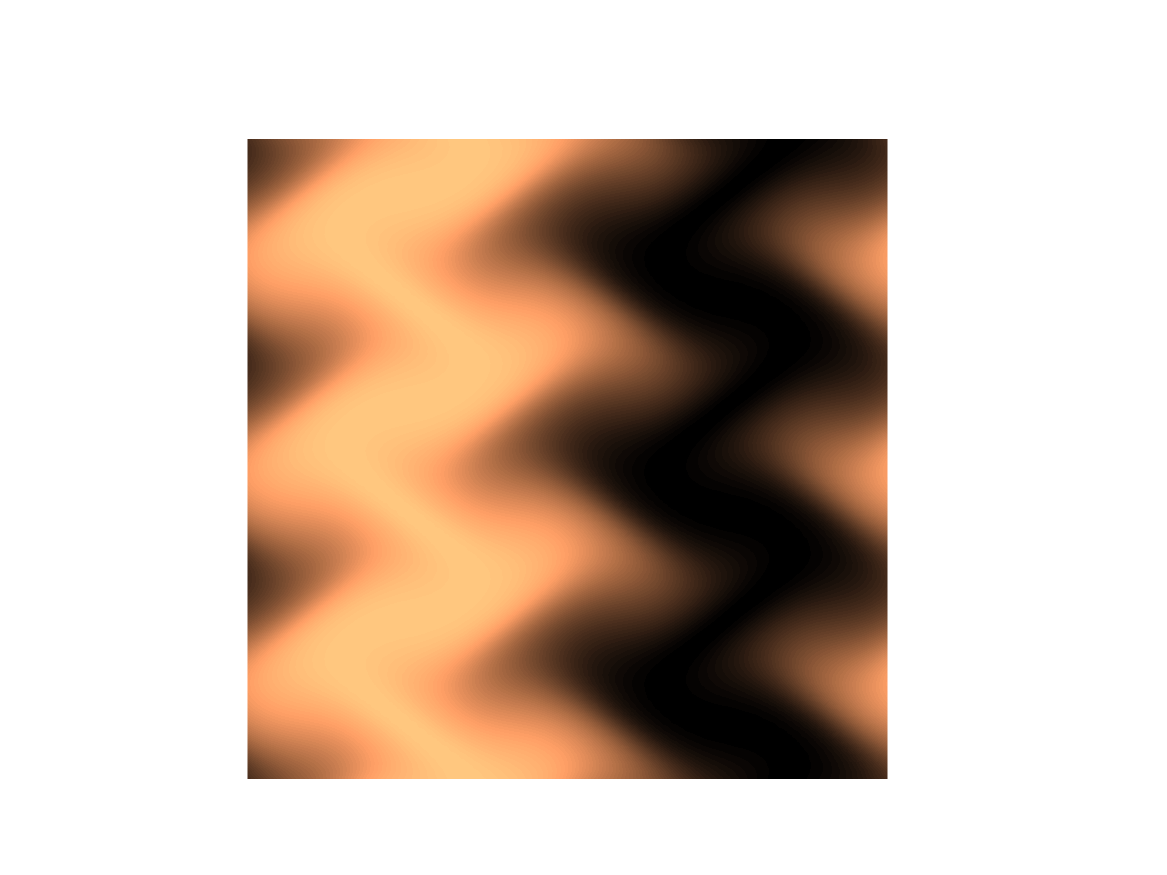}
  \end{subfigure}
  \begin{subfigure}[b]{0.24\linewidth}
    \includegraphics[width=\linewidth]{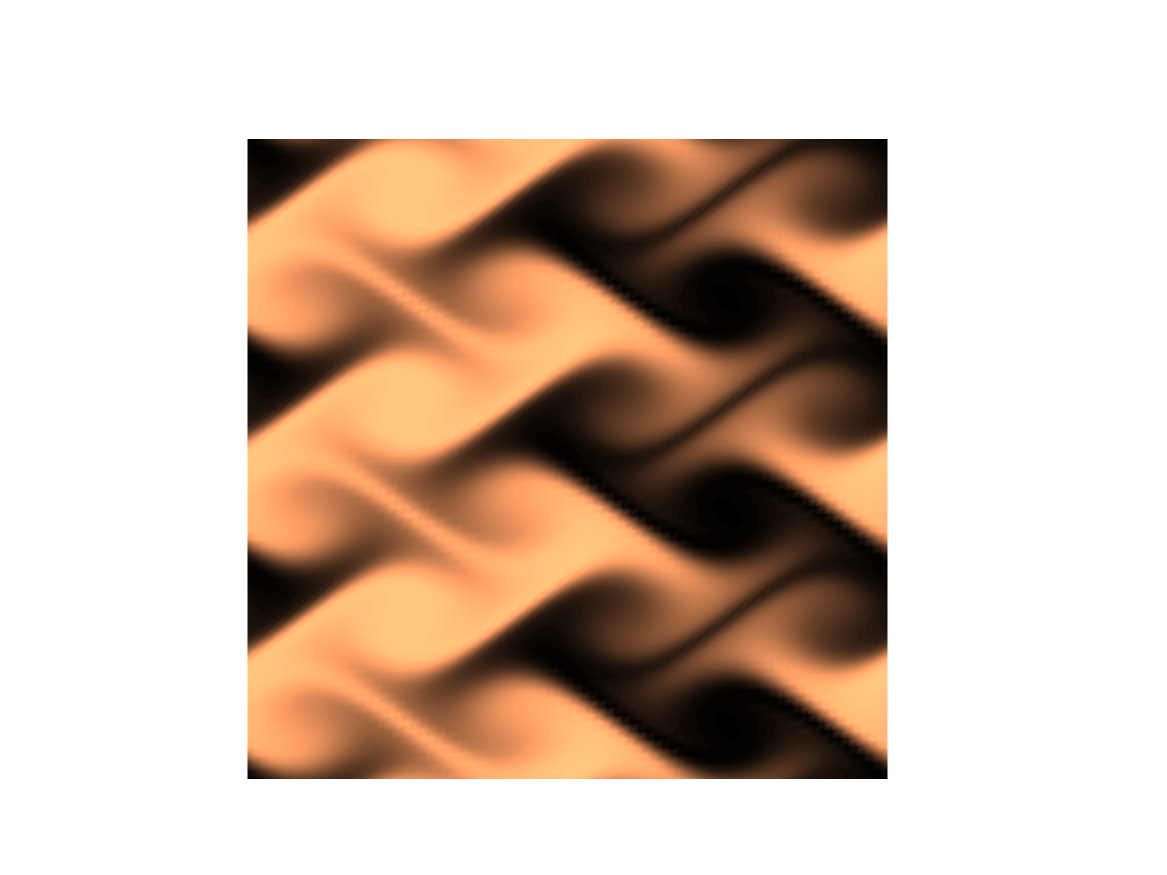}
  \end{subfigure}
    \begin{subfigure}[b]{0.24\linewidth}
    \includegraphics[width=\linewidth]{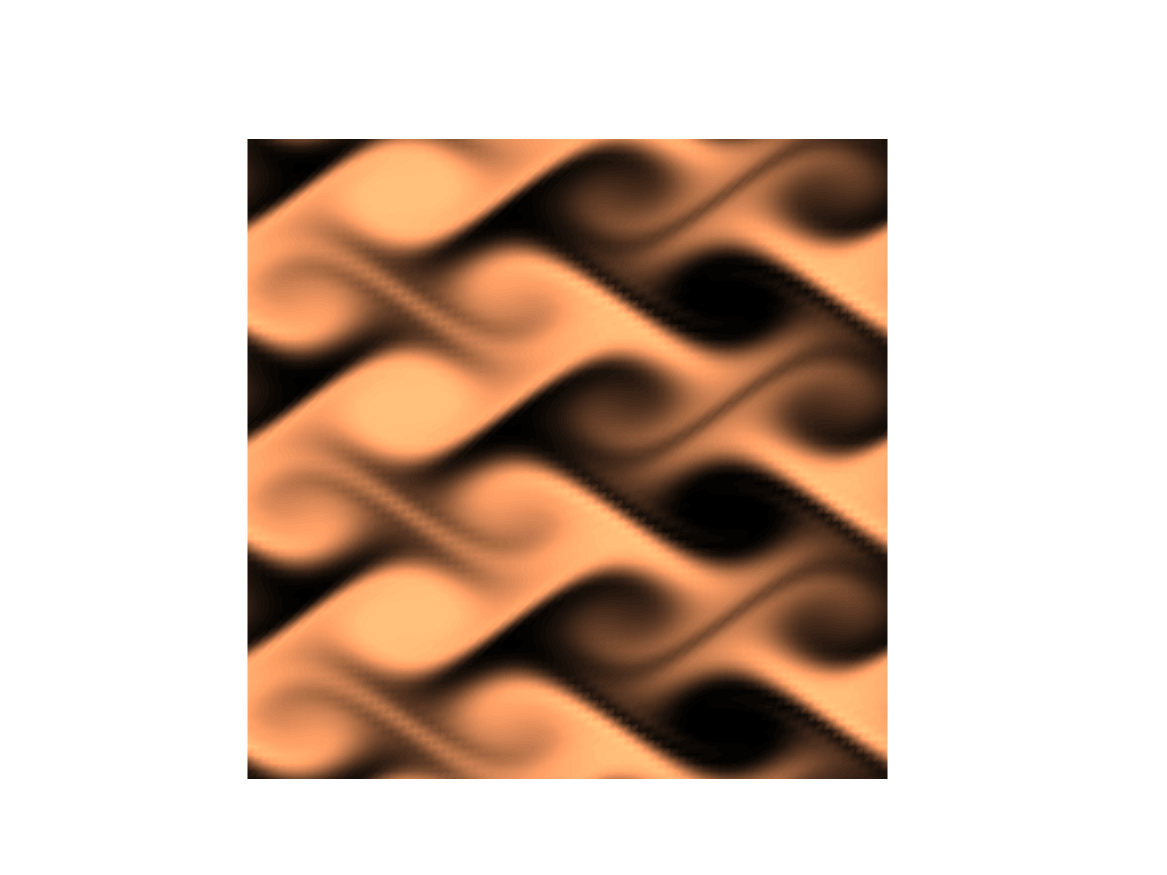}
  \end{subfigure}
  \begin{subfigure}[b]{0.24\linewidth}
    \includegraphics[width=\linewidth]{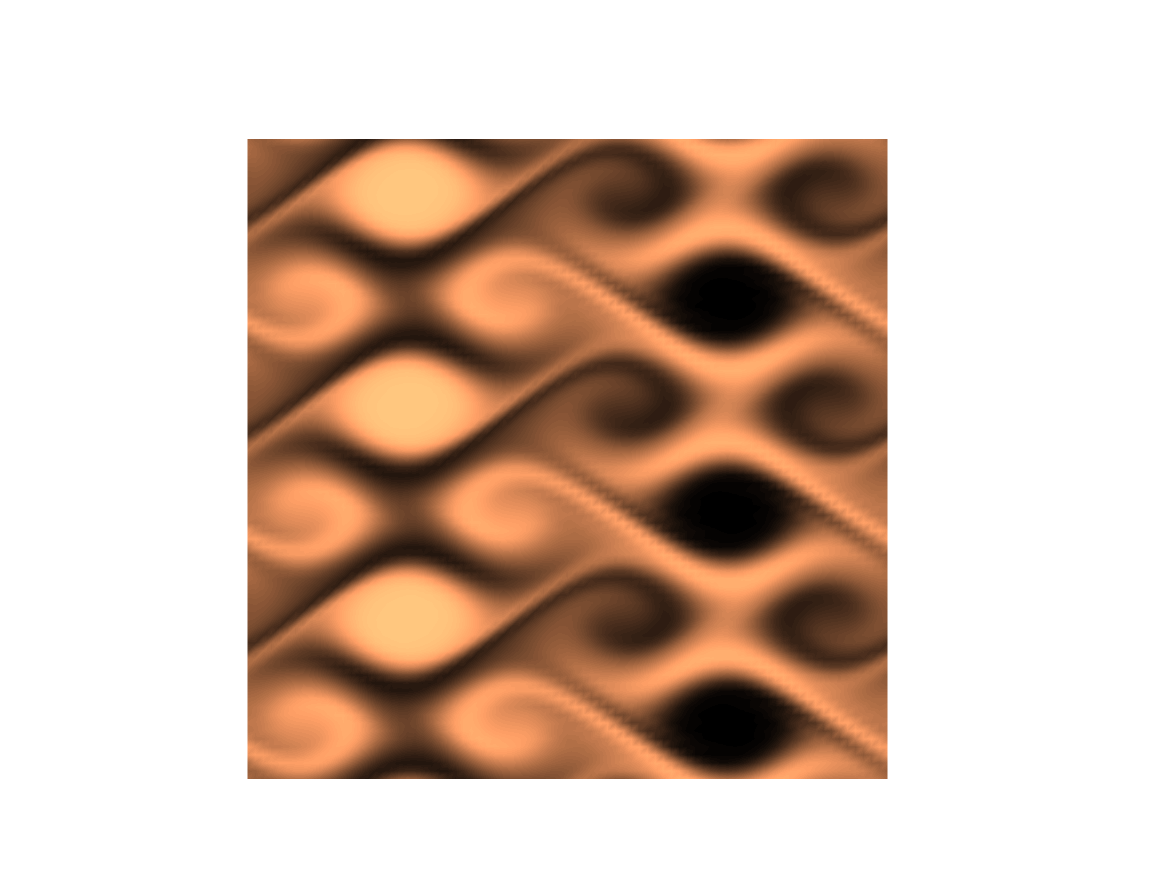}
  \end{subfigure}
  \begin{subfigure}[b]{0.24\linewidth}
    \includegraphics[width=\linewidth]{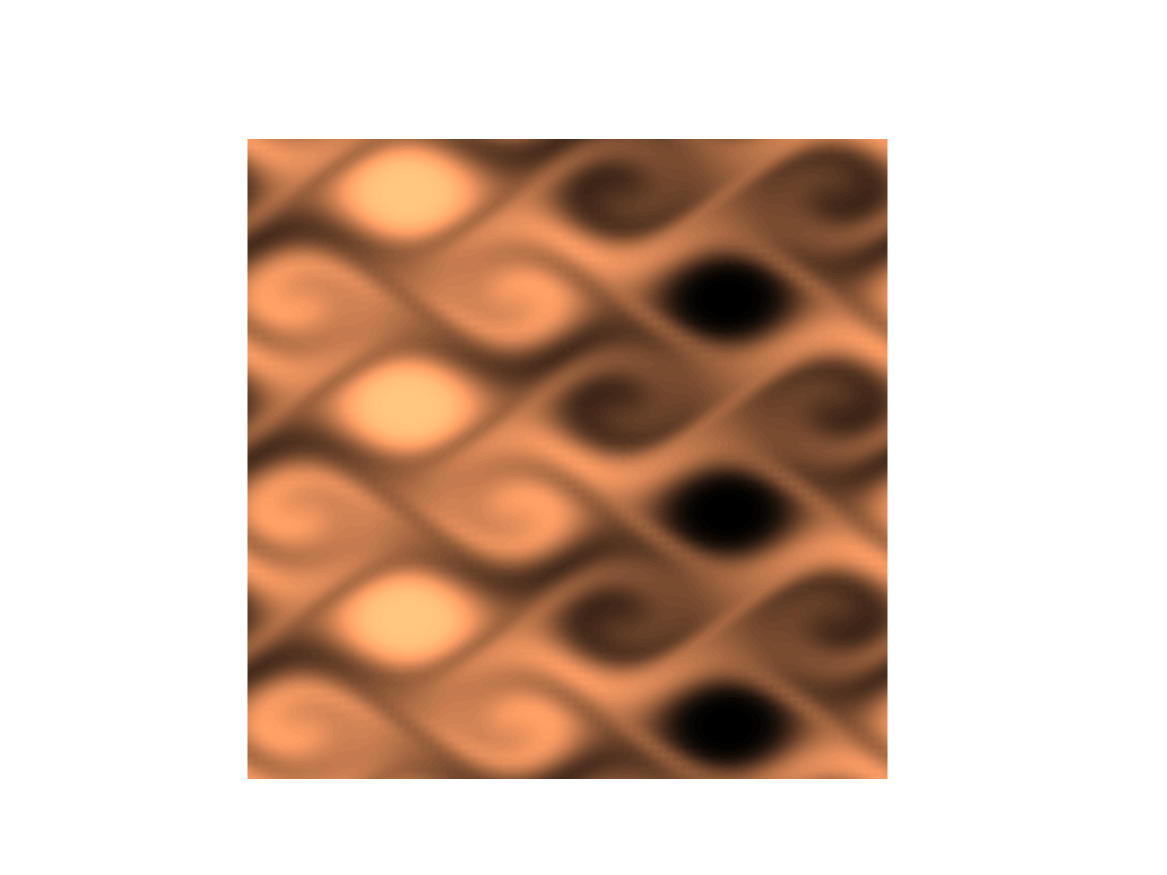}
  \end{subfigure}
  \begin{subfigure}[b]{0.24\linewidth}
    \includegraphics[width=\linewidth]{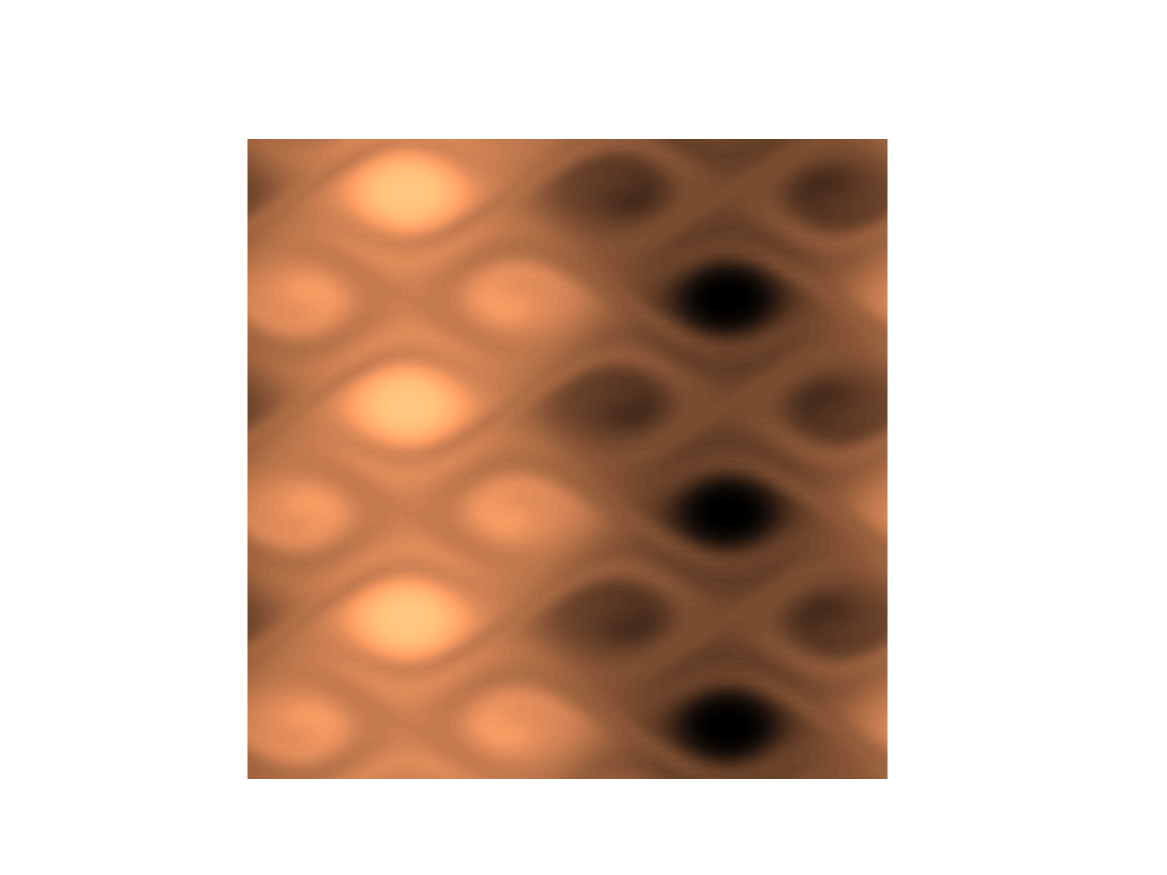}
  \end{subfigure}
  \caption{Snapshots of the evolutions of the solution $h$ to \eqref{FP:heps} when $\psi_\eps$ is given in \eqref{e:stream_function} with $\eps=1/2$.}
  \label{fig:cell05}
\end{figure}

A similar situation appears also for $\eps=1$ (see Figure \ref{fig:cell1}), where $h$ now 
 follows the streamlines in Figure \ref{fig:stream}, which are closed except for the hyperbolic manifolds. It is not clear how the dependence on 
 $\eps$ is reflected on an estimate of the type \eqref{eq:L2hreal}. Of course, the $x$-average should be replaced by an average on streamlines. However,
 it is not clear if the decay rate in $\nu$ will undergo significant changes. Although mixing is very fast near hyperbolic points, a global rate is
 very likely to be similar to that  in \eqref{eq:L2hreal}, since diffusion will ``push'' the solution $h$ into cells, away from the hyperbolic points.

\begin{figure}[h!]
  \centering
  \begin{subfigure}[b]{0.24\linewidth}
    \includegraphics[width=\linewidth]{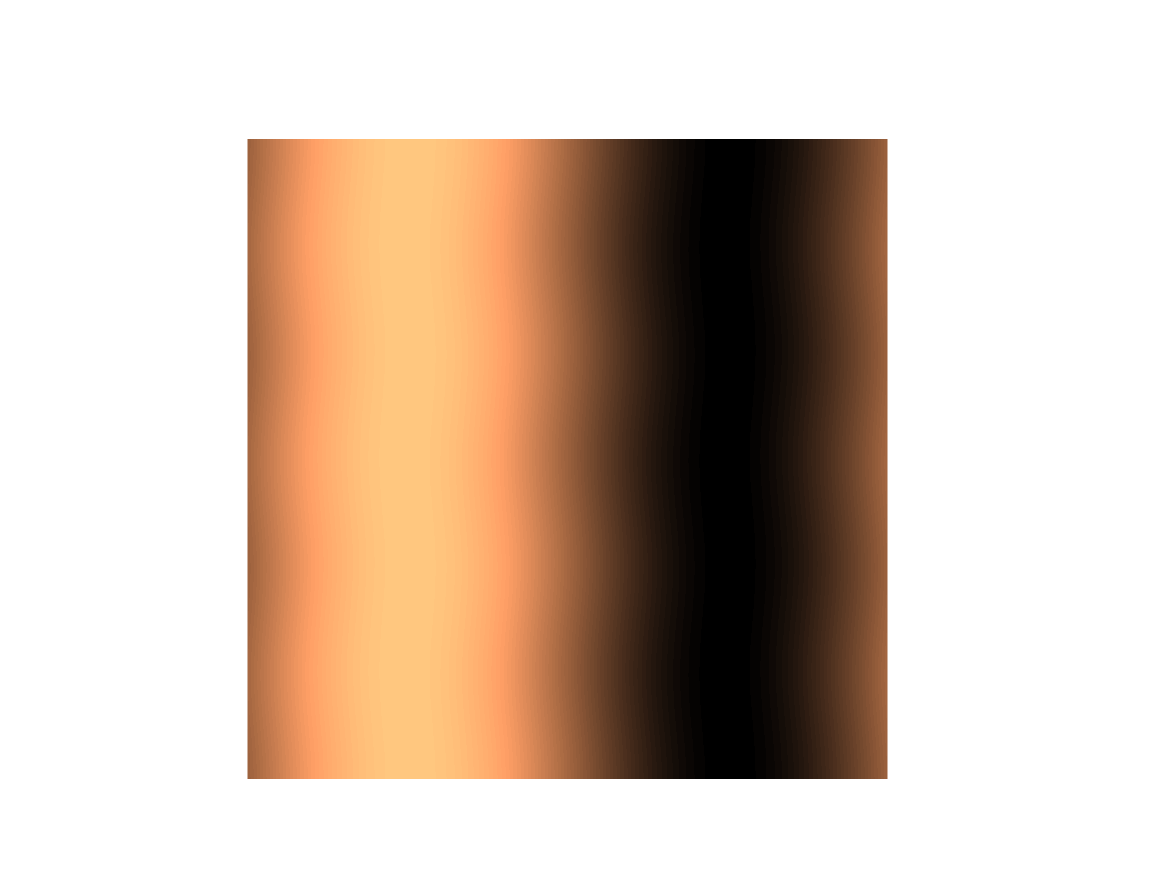}
  \end{subfigure}
  \begin{subfigure}[b]{0.24\linewidth}
    \includegraphics[width=\linewidth]{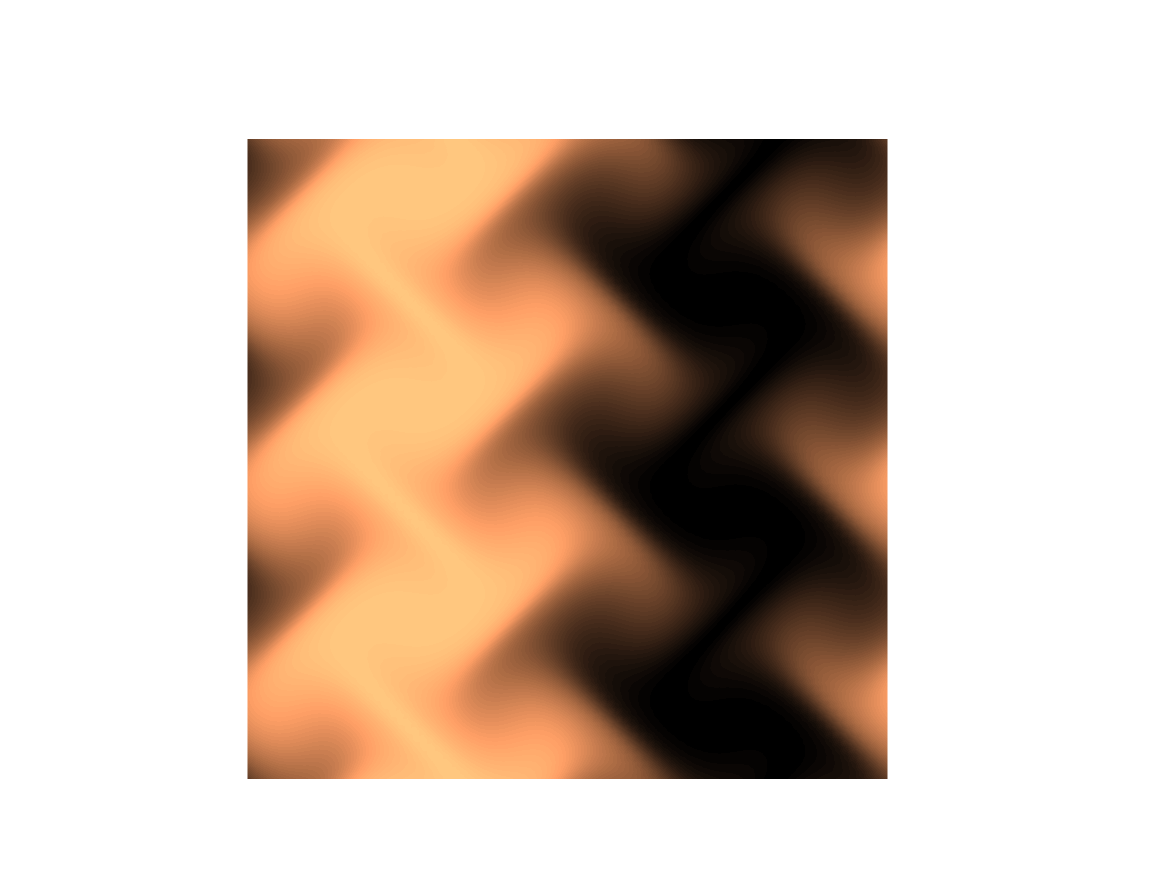}
  \end{subfigure}
  \begin{subfigure}[b]{0.24\linewidth}
    \includegraphics[width=\linewidth]{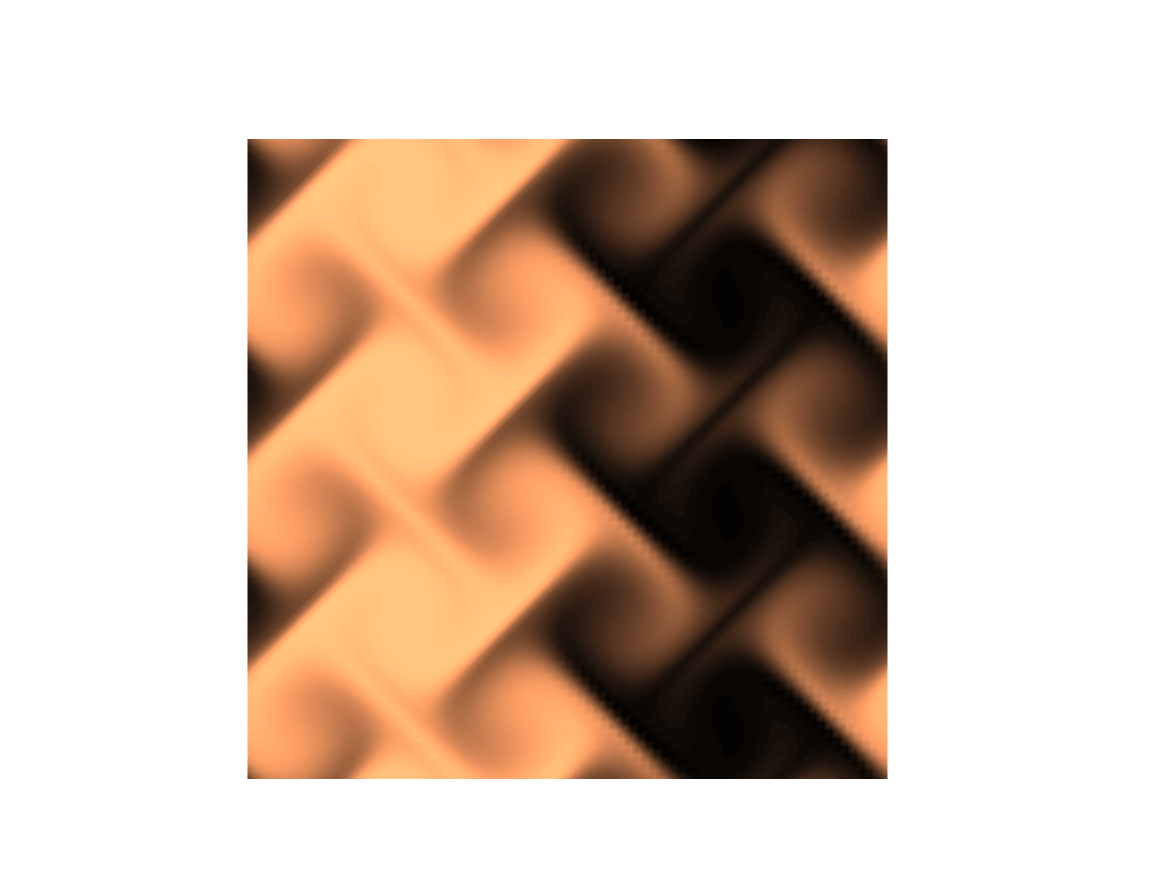}
  \end{subfigure}
  \begin{subfigure}[b]{0.24\linewidth}
    \includegraphics[width=\linewidth]{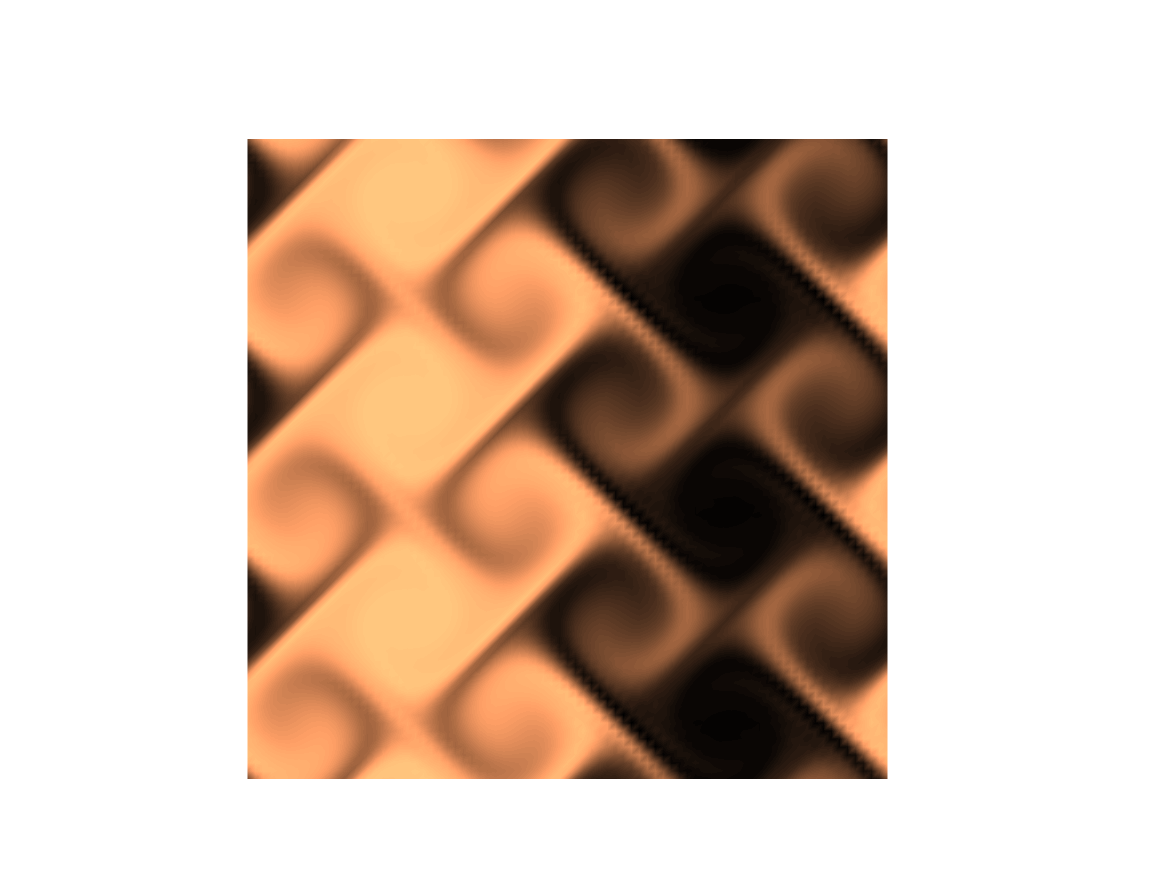}
  \end{subfigure}
    \begin{subfigure}[b]{0.24\linewidth}
    \includegraphics[width=\linewidth]{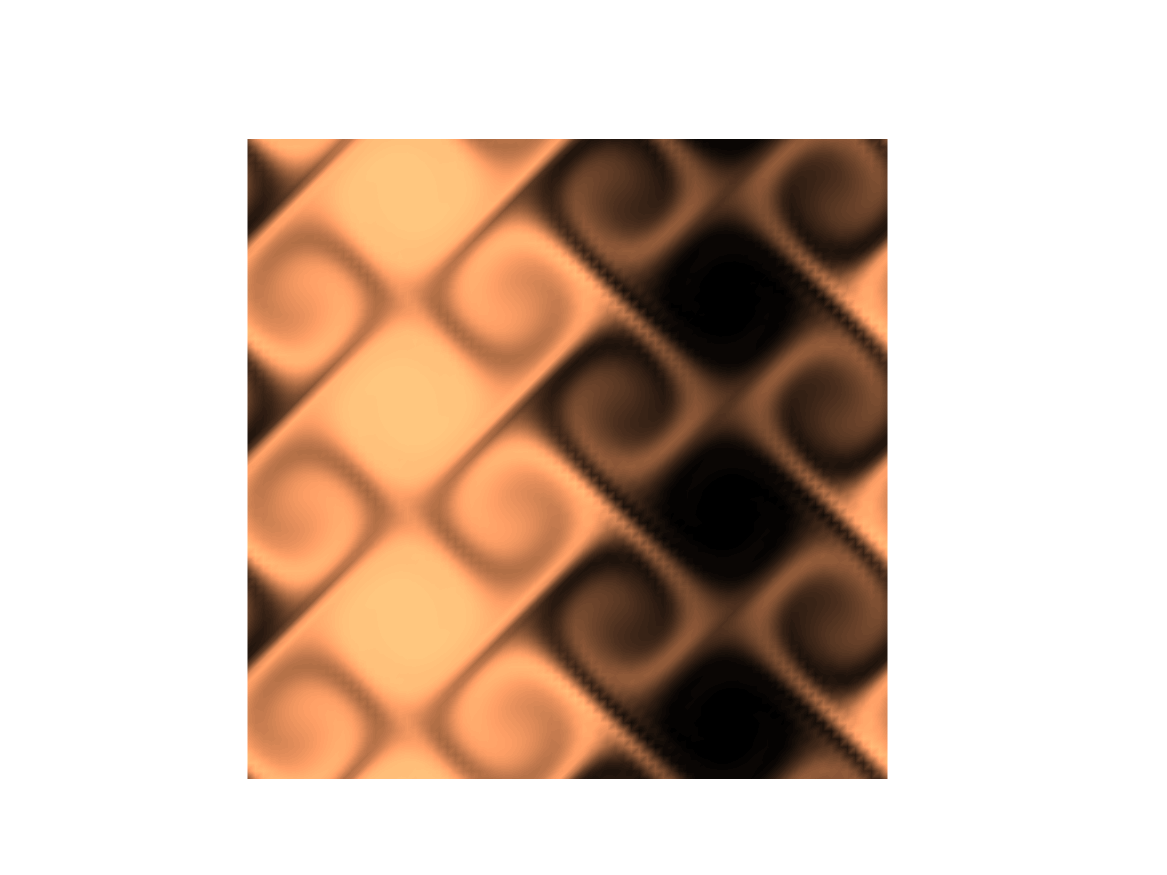}
  \end{subfigure}
  \begin{subfigure}[b]{0.24\linewidth}
    \includegraphics[width=\linewidth]{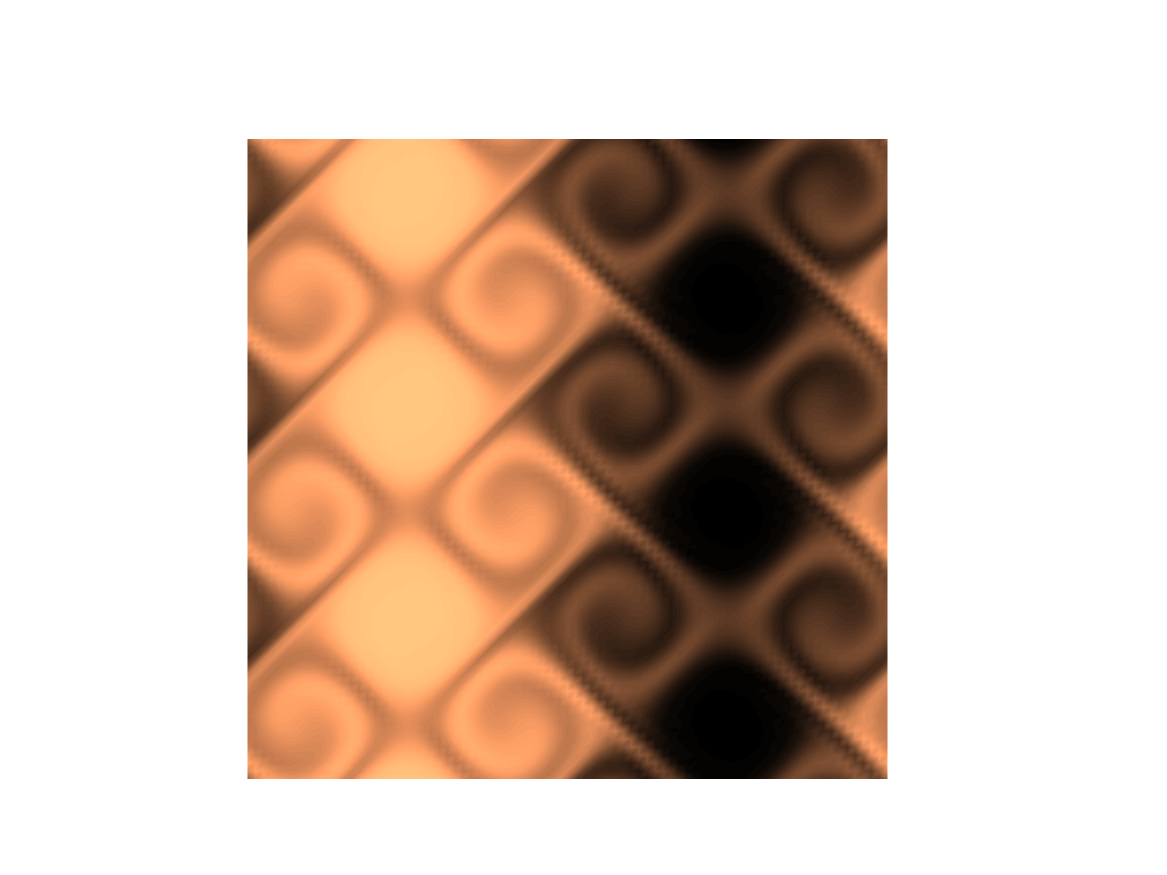}
  \end{subfigure}
  \begin{subfigure}[b]{0.24\linewidth}
    \includegraphics[width=\linewidth]{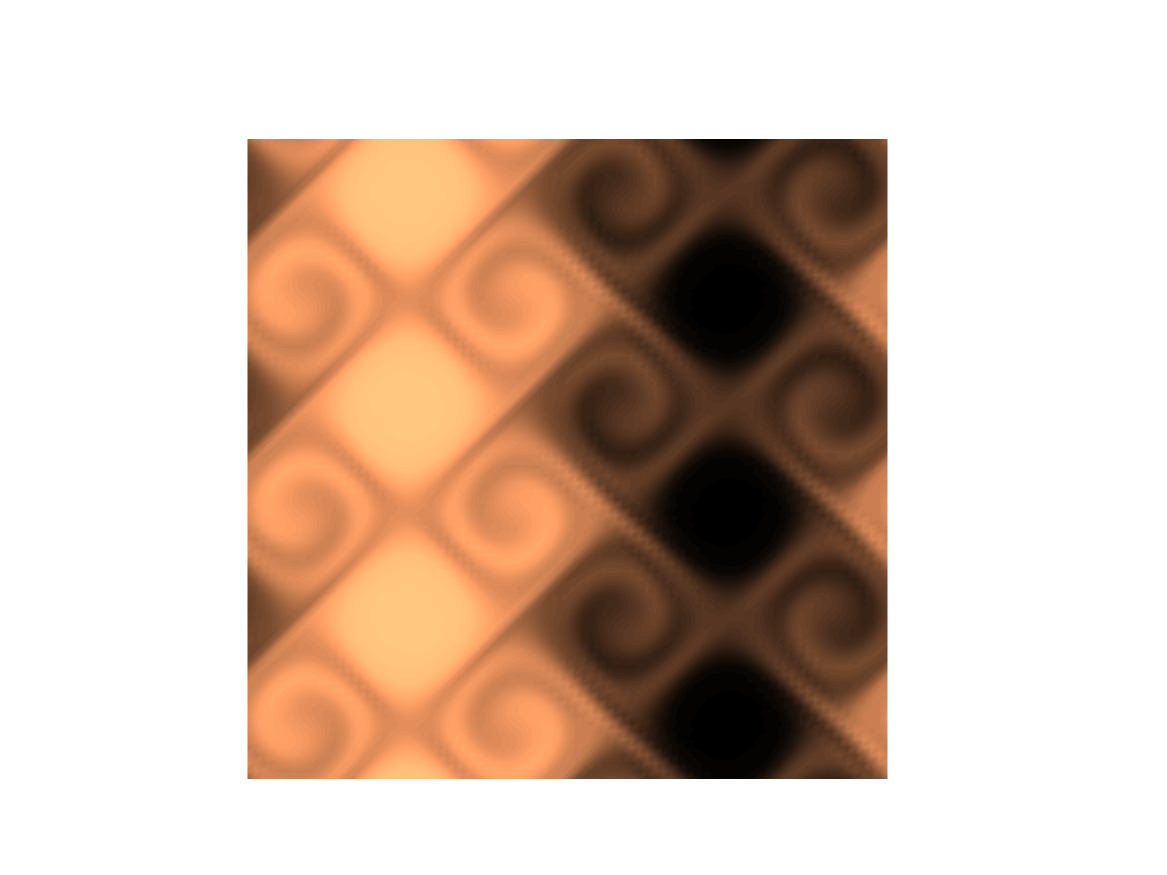}
  \end{subfigure}
  \begin{subfigure}[b]{0.24\linewidth}
    \includegraphics[width=\linewidth]{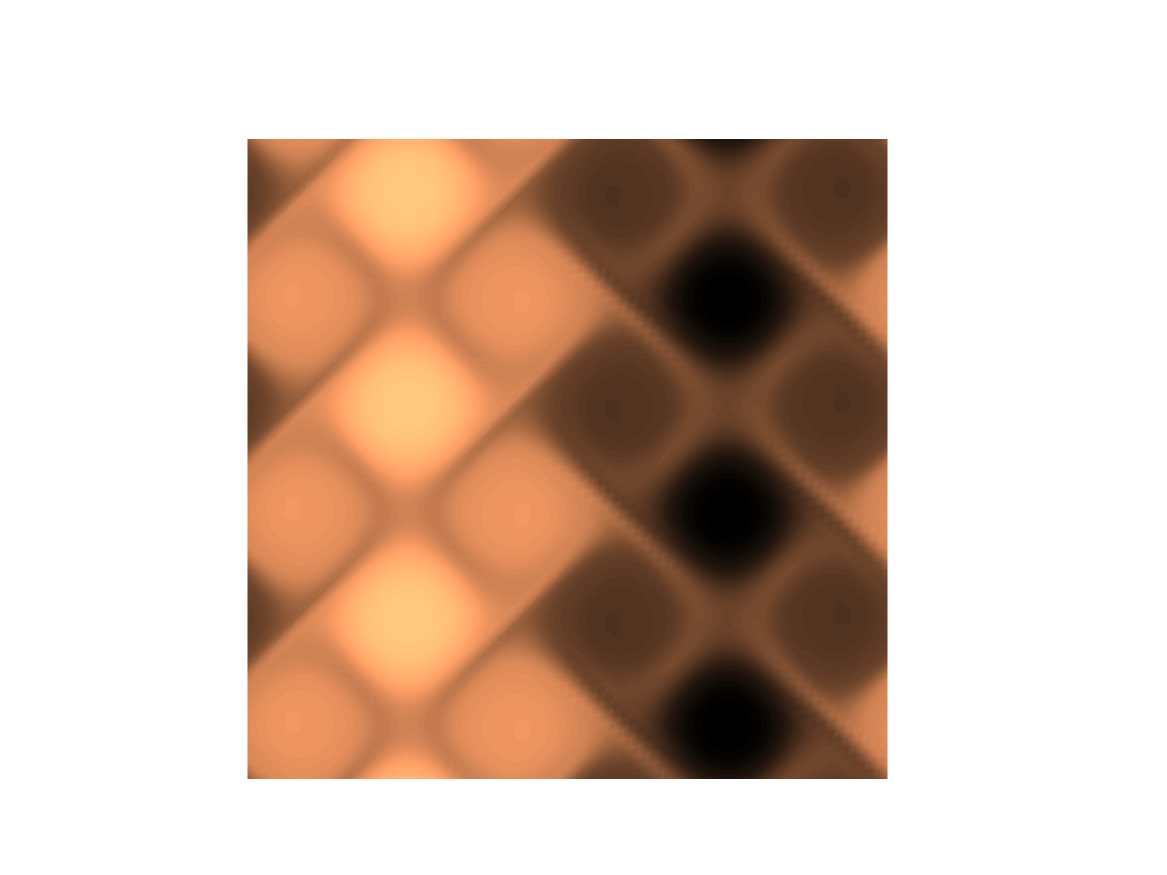}
  \end{subfigure}
  \caption{Snapshots of the evolutions of the solution $h$ to \eqref{FP:heps} when $\psi_\eps$ is given in \eqref{e:stream_function} with $\eps=1$.}
  \label{fig:cell1}
\end{figure}

\section{Conclusions}

The problem of the effect of compressible perturbations on the long time behaviour of solutions to the advection-diffusion equation was studied in this paper. In particular, for shear flows we characterize homogenization rates (see Theorem \ref{thm:main1}) and the longtime dynamics (see Theorem \ref{thm:main2}) of the solution very precisely in terms of the
dependence on the parameter (given by $1/\nu$) measuring the size of the incompressible perturbation. In the case of more complicated flows, such as
the Childress-Soward flow, we exhibit numerical evidence for what the expected behaviour should be.

There are several open questions that we plan to return to in future work. First obtaining similar sharp quantitative estimates for flows with closed streamlines is a challenging and interesting problem. This problem has already been studied for radial flows. Second, understanding in more depth the connection between the scaling of the effective diffusion coefficient with respect to the strength of the perturbation and the long time behaviour of solutions to the advection-diffusion equation. Third, studying the ``inverse'' problem, namely identifying the optimal nonreversible perturbation that maximizes the rate of convergence to equilibrium. Finally studying similar problems for random velocity fields. 

\medskip {\bf Acknowledgments}
M. Coti Zelati acknowledges funding from the Royal Society through a University Research Fellowship (URF\textbackslash R1\textbackslash 191492). The work of G.P. was partially funded by the EPSRC, grant number EP/P031587/1, and by J.P. Morgan Chase $\&$ Co. Any views or opinions expressed herein are solely those of the authors listed, and may differ from the views and opinions expressed by J.P. Morgan Chase $\&$ Co. or its affiliates. This material is not a product of the Research Department of J.P. Morgan Securities LLC. This material does not constitute a solicitation or offer in any jurisdiction.

\bibliographystyle{plain} 

\bibliography{biblio}

\end{document}